\numberwithin{theorem}{section}
\numberwithin{equation}{section}
\numberwithin{definition}{section}
\renewcommand{\cases}[1]{\left\{ \begin{array}{rl} #1 \end{array} \right.}
\def\XXint#1#2#3{{\setbox0=\hbox{$#1{#2#3}{\int}$ }
\vcenter{\hbox{$#2#3$ }}\kern-.6\wd0}}
\newlength{\dhatheight}
\newlength{\dtildeheight}
\def\b{\big}
\def\sep{\,|\,}
\def\bsep{\,\b|\,}
\def\diam{{\rm diam}}
\def\supp{{\rm supp}}
\def\R{\mathbb{R}}
\def\Z{\mathbb{Z}}
\def\dx{\,{\rm d}x}
\def\dy{\,{\rm d}y}
\def\dd{\,{\rm d}}
\def\pp{\partial}
\def\<{\langle}
\def\>{\rangle}
\def\mA{{\sf A}}
\def\mF{{\sf F}}
\def\bfa{{\bm a}}
\def\bfg{{\bm g}}
\def\bfh{{\bm h}}
\def\D{\nabla}
\def\del{\delta}
\def\ddel{\delta^2}
\def\a{{\rm a}}
\def\c{{\rm c}}
\def\ac{{\rm ac}}
\def\i{{\rm i}}
\def\L{\Lambda}
\def\Is{\mathcal{I}}
\def\As{\mathcal{A}}
\def\Cs{\mathcal{C}}
\def\E{\mathcal{E}}
\def\tilu{\tilde u}
\def\T{\mathcal{T}}
\def\U{\mathcal{U}}
\def\Uss{\dot{\mathcal{U}}^{1,2}}
\definecolor{cocol}{rgb}{0.7, 0, 0}
\definecolor{hwcol}{rgb}{0, 0.7, 0}
\definecolor{adcol}{rgb}{0, 0, 0.7}
\begin{document}

	\title{Coupling Atomistic, Elasticity and Boundary Element Models}
	
	\author{A. S. Dedner}
	\address{A. Dedner\\ Mathematics Institute \\ Zeeman Building \\
		University of Warwick \\ Coventry CV4 7AL \\ UK}
	\email{a.s.dedner@warwick.ac.uk}

	\author{C. Ortner}
	\address{C. Ortner\\ Mathematics Institute \\ Zeeman Building \\
		University of Warwick \\ Coventry CV4 7AL \\ UK}
	\email{c.ortner@warwick.ac.uk}
	
	\author{H. Wu}
	\address{H. Wu\\ Mathematics Institute \\ Zeeman Building \\
		University of Warwick \\ Coventry CV4 7AL \\ UK}
	\email{huan.wu@warwick.ac.uk}

	\date{\today}
	\thanks{HW was supported by MASDOC doctoral training centre, EPSRC grant
		EP/H023364/1. CO was supported by ERC Starting Grant 335120.}

	\begin{abstract}
		We formulate a new atomistic/continuum (a/c) coupling scheme that employs
		the boundary element method (BEM) to obtain an improved far-field
		boundary condition. We establish sharp error bounds in a 2D model problem
		for a point defect embedded in a homogeneous crystal.
	\end{abstract}
	
	\maketitle
	
	\section{Introduction}
	Atomistic-to-continuum (a/c) coupling is a class of multi-scale methods that
	couple atomistic models with continuum elasticity models to reduce computational cost while preserving a significant level of accuracy. In
	the continuum model coarse finite element methods are often used. We refer
	to \cite{acta} and the references therein for a comprehensive introduction and a framework for error analysis.
	
	The present work explores the feasibility and effectiveness of employing
	boundary elements in addition to the existing a/c framework to better
	approximate the far-field energy which is most typically truncated.
	Specifically we combine a quasi-nonlocal (QNL) type method with a BEM, in a 2D
	model problem.
	
	
	The QNL-type coupling, first introduced in \cite{Shimokawa:2004,E:2006}, is an energy-based a/c
	method that introduces a interface region between the atomistic and continuum model
	so that the model is ``free of ghost-forces'' (a notion of consistency related
	to the patch test, see \S \ref{sec:g23_coupling}). The first explicit construction of such schemes for
	two-dimensional domains with corners is developed in \cite{PRE-ac.2dcorners} for
	a neareast-neighbour many-body site potential. We call this coupling scheme ``G23''
	for future reference. An error analysis of the G23 coupling equipped with coarse finite elements of order two or higher is described in \cite{2016-qcp2}.
	
	The boundary element method is a numerical method for solving linear partial differential equations by discretising the boundary integral formulation. For a general introduction and analysis we refer to \cite{Olaf:BEM}. In the present work we first approximate a nonlinear elasticity model by a quadratic energy functional which is then discretised by the BEM.
	
	The idea of employing a BEM-like scheme to model the elastic far-field is not
	new. For example, in \cite{Kanzaki, Sinclair} an atomistic Green's function
	method is employed to determine a far-field boundary condition which yields a
	{\em sequential} multi-scale scheme, while \cite{Woodward, Li:lattice_green}
	formulate {\em concurrent} multi-scale schemes coupling atomistic mechanics to a
	Green's function method. In this setting, a preliminary error analysis can
	already be found in \cite{EhrOrtSha:2013}. By contrast, our new scheme employs a
	BEM, i.e., a continuum elasticity Green's function approach to model the elastic
	far-field. Moreover, our formulation allows a seemless transition between
	atomistic mechanics, nonlinear continuum mechanics (FEM) and linearised
	continuum mechanics (BEM). This flexibility is particularly interesting for an
	error analysis since we are able to determine quasi-optimal error balancing
	between the two difference approximations.
	
	To conclude the introduction we remark that the BEM far-field boundary
	condition can of course be employed for other A/C coupling schemes as well
	as more complex (in terms of geometry and interaction law) atomistic
	models, but in particular the latter generalisation requires some additional
	work. With this in mind, the present work may be considered a proof of
	concept.


	\subsection{Outline}
	%
	In the present work we estimate the accuracy of a QNL-type atomistic/continuum
	coupling method employing a P1 FEM in the continuum region and P0 BEM on the
	boundary against an exact solution obtain from a fully atomistic model.
	We review the atomistic model in \S~\ref{sec:atomistic-model}, the QNL coupling scheme in \S~\ref{sec:g23_coupling} and \S~\ref{sec:fem}, and the modification to
	incorporate a BEM for the elastic far-field in \S~\ref{sect:model}.
	In \S \ref{sect:prelim} we collect notation, assumptions and preliminary results
	required to state the main results in \S~\ref{sec:main_results}.
	We then deduce the optimal approximation parameters (atomistic region size,
	continuum region size, FEM and BEM meshes) in \S \ref{sec:opt_param}. We will
	conclude that omitting the FEM region entirely yields the best possible
	convergence rate.

	\section{Method Formulation}\label{sec:method_form}

	\subsection{Atomistic model} \label{sec:atomistic-model}
	
	In order to employ the G23 coupling in \cite{PRE-ac.2dcorners}, we follow the same model construction therein. We consider an infinite 2D triangular lattice as our model geometry,
	\begin{equation*}
	\L := \mA \Z^2, \quad \text{with } \mA =
	\begin{pmatrix}
	1 & \cos(\pi /3) \\ 0 & \sin(\pi/3)
	\end{pmatrix}.
	\end{equation*}
	We define the six nearest-neighbour lattice directions by $a_1 :=(1,0)$, and
	$a_j := Q_6^{j-1}a_1, j \in \Z$, where $Q_6$ denotes the rotation through the
	angle $\pi/3$.  We equip $\L$ with an {\em atomistic triangulation}, as shown
	in Figure~\ref{fig:lattice}, which will be used in both error analysis and
	numerical simulations. We denote this triangulation by $\T$ and its elements by
	$T\in \T$. In addition, we denote $\bfa := (a_j)_{j=1}^6$, and
	$\mF \bfa : = (\mF a_j)_{j=1}^6$, for $\mF \in \R^{m\times 2}$.
	
	\begin{figure}
		\centering
		\includegraphics[width=0.5\linewidth]{./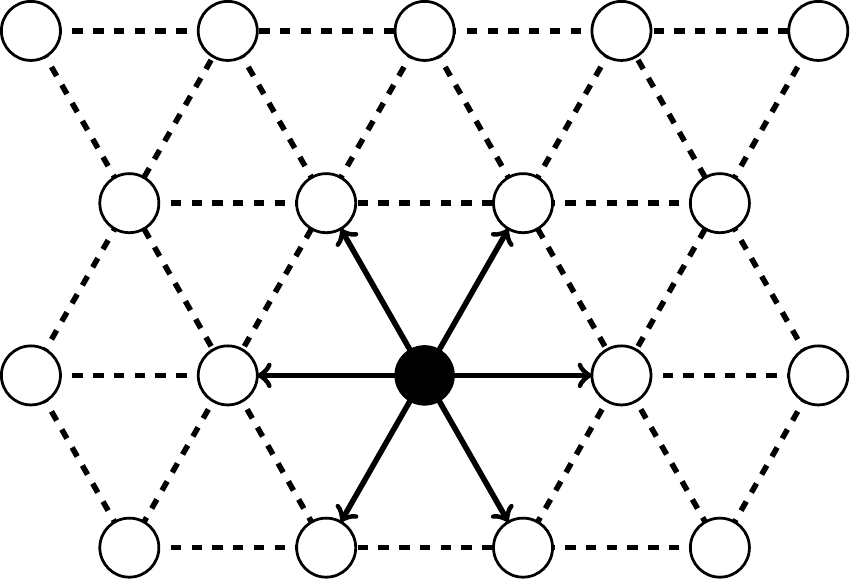}
		\caption{The lattice (circles), its canonical triangulation (dashed lines) and the six nearest-neighbour directions (arrows). This illustration is taken from \cite{2016-qcp2}.}
		\label{fig:lattice}
	\end{figure}
	
	We identify a discrete displacement map $u: \L \rightarrow \R$ with its
	continuous piecewise affine interpolant, with weak derivative $\D u$, which is
	also the pointwise derivative on each element $T \in \mathcal{T}$. For $m =
	1,2,3$, we define the spaces of displacements as
	\begin{equation*}
	\begin{aligned}
	\mathcal{U}_0 & := \b\{u \sep \L \rightarrow \R^m : \supp(\D u ) \text{ is compact} \b \}, \quad \text{and}\\
	\dot{\mathcal{U}}^{1,2}  &:= \b\{u \sep \L \rightarrow \R^m : \D u  \in L^2\b\} .
	\end{aligned}
	\end{equation*}
	We equip $\dot{\mathcal{U}}^{1,2}$ with the $H^1$-semi norm and denote $\|u\|_{\mathcal{U}^{1,2}} := \|\D u\|_{L^2(\R^2)}$. From \cite{OrShSu:2012} we know that $\mathcal{U}_0$ is dense in $\dot{\mathcal{U}}^{1,2}$ in the sense that, if $u \in \dot{\mathcal{U}}^{1,2}$, then there exist $ u_j \in \mathcal{U}_0$ such that $\D u_j \rightarrow \D u$ strongly in $L^2$.
	
	A \emph{homogeneous displacement} is a map $u_\mF: \L \rightarrow \R^m, u_\mF(x) : = \mF x$, where $\mF\in \R^{m\times2}$.
	
	For a map $u:\L \rightarrow \R^m$, we define the finite difference operator
	\begin{equation}\label{def:diff_op}
	\begin{aligned}
	D_j u(x) &:= u(x+a_j)-u(x), \quad x \in \L, j \in \{1,2,...,6\}, \quad \text{and}\\
	Du(x) &:= (D_j u(x))_{j=1}^6.
	\end{aligned}
	\end{equation}
	Note that $Du_{\mF}(x) = \mF \bfa$.
	
	We assume that the atomistic interaction is represented by a nearest-neighbour
	many-body site energy potential $V \in C^r(\R^{m\times 6})$,$r\ge 5$, with
	$V(\mathbf{0}) = 0$
	and $\nabla^j V \in L^\infty(\R^{m\times 6})$ for $j = 2, \dots, 5$.
	In addition, we assume that $V$ satisfies the \emph{point symmetry}
	\begin{equation*}
	V((-g_{j+3})_{j=1}^6) = V(\bfg) \quad \forall \bfg \in \R^{m\times 6}.
	\end{equation*}
	Because $V(\mathbf{0}) = 0$, the energy of a displacement $u\in \mathcal{U}_0$
	\begin{equation*}
	\E^\a(u): = \sum_{\ell\in \Lambda}V(Du(\ell)),
	\end{equation*}
	is well-defined.  We need the following lemma to extend $\E^\a$ to
	$\dot{\mathcal{U}}^{1,2}$ to formulate a variational problem in the energy space
	$\dot{\mathcal{U}}^{1,2}$, 
	
	\begin{lemma}
		$\E^\a :(\mathcal{U}_0,\|\D \cdot\|_{L^2} )\rightarrow \R$ is continuous and
		has a unique continuous extension to $\dot{\mathcal{U}}^{1,2}$, which we still
		denote by $\E^\a $. Moreover, the extended
		$\E^\a :(\dot{\mathcal{U}}^{1,2},\|\D \cdot\|_{L^2} ) \rightarrow \R$ is
		$r$-times continuously Fr\'{e}chet differentiable.
	\end{lemma}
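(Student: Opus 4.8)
The plan is to establish continuity and the extension first, and then address Fréchet differentiability of the extended functional. The key quantitative fact driving everything is that $V \in C^r$ with $V(\mathbf{0}) = 0$ and bounded derivatives up to order five, so by Taylor expansion $|V(\bfg)| \le C(|\bfg| + |\bfg|^2)$ and, more usefully, $|\nabla V(\bfg)| \le C(1 + |\bfg|)$ with $\nabla^2 V, \dots, \nabla^5 V$ uniformly bounded. The first step is to show that for $u, v \in \mathcal{U}_0$ the difference $\E^\a(u) - \E^\a(v)$ is controlled by $\|\D u - \D v\|_{L^2}$ on a neighbourhood. For this I would use the elementary but essential equivalence between the discrete finite-difference stencil $Du(\ell)$ and the local gradient: the quantity $\sum_{\ell} |Du(\ell)|^2$ is comparable to $\|\D u\|_{L^2(\R^2)}^2$, because on the triangular lattice the six nearest-neighbour differences $D_j u(\ell)$ determine (and are determined by) the piecewise-affine gradient on the triangles adjacent to $\ell$, up to a fixed constant depending only on the lattice geometry. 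This norm-equivalence is the workhorse.

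First I would prove local Lipschitz continuity of $\E^\a$ on $\mathcal{U}_0$. Writing $\E^\a(u) - \E^\a(v) = \sum_\ell \bigl(V(Du(\ell)) - V(Dv(\ell))\bigr)$ and using the mean value form
\begin{equation*}
V(Du(\ell)) - V(Dv(\ell)) = \int_0^1 \nabla V\bigl(Dv(\ell) + t(Du(\ell)-Dv(\ell))\bigr) \cdot \bigl(Du(\ell)-Dv(\ell)\bigr) \dt,
\end{equation*}
the bound $|\nabla V(\bfg)| \le C(1+|\bfg|)$ together with Cauchy--Schwarz and the norm-equivalence gives
\begin{equation*}
|\E^\a(u) - \E^\a(v)| \le C\bigl(1 + \|\D u\|_{L^2} + \|\D v\|_{L^2}\bigr)\,\|\D u - \D v\|_{L^2}.
\end{equation*}
Since $\mathcal{U}_0$ is dense in $\dot{\mathcal{U}}^{1,2}$, this local Lipschitz estimate shows $\E^\a$ is uniformly continuous on bounded sets and hence extends uniquely and continuously to all of $\dot{\mathcal{U}}^{1,2}$; the extension is constructed by taking limits along approximating sequences $u_j \in \mathcal{U}_0$ with $\D u_j \to \D u$, and the estimate guarantees the limit is independent of the chosen sequence.

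For the $r$-times Fréchet differentiability I would proceed by exhibiting candidate derivatives and verifying the Fréchet remainder condition. The natural $k$-th derivative at $u$ acting on test directions $v_1,\dots,v_k \in \dot{\mathcal{U}}^{1,2}$ is
\begin{equation*}
\delta^k \E^\a(u)[v_1,\dots,v_k] = \sum_{\ell \in \Lambda} \nabla^k V(Du(\ell))\bigl[Dv_1(\ell),\dots,Dv_k(\ell)\bigr].
\end{equation*}
For $k \ge 2$ the boundedness $\|\nabla^k V\|_{L^\infty} < \infty$ and repeated Cauchy--Schwarz against the norm-equivalence show each such multilinear form is bounded on $(\dot{\mathcal{U}}^{1,2})^k$, so these expressions define continuous multilinear maps; for $k=1$ one again uses the linear-growth bound on $\nabla V$. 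The remaining work is to verify that each $\delta^{k-1}\E^\a$ is Fréchet differentiable with derivative $\delta^k \E^\a$, i.e. that the first-order Taylor remainder is $o(\|\D h\|_{L^2})$ as $\|\D h\|_{L^2} \to 0$; this follows from the integral remainder formula for $\nabla^{k-1}V$ and uniform continuity of $\nabla^{k} V$ together with dominated convergence on the lattice sum. The main obstacle is precisely this last point: because the sum runs over the infinite lattice, local smoothness of $V$ does not by itself give the uniform remainder control needed for Fréchet (as opposed to merely Gâteaux) differentiability. The key is that $\nabla^r V$, being bounded and uniformly continuous (as $V \in C^r$ with bounded derivatives through order $r$ on the whole space when $r=5$, and one order lower for the highest derivative), provides a modulus of continuity that can be combined with the $\ell^2$-summability of the stencils $Dh(\ell)$ to dominate the tail of the series uniformly — this is where the hypothesis $r \ge 5$ and the global bounds on $\nabla^j V$ are genuinely used.
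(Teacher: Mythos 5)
The paper does not actually prove this lemma; it cites Lemma~2.1 of \cite{EhrOrtSha:2013}, so your proposal has to be measured against that standard argument. Your overall architecture (equivalence of the stencil $\ell^2$-sum with $\|\D u\|_{L^2}$, density of $\mathcal{U}_0$, candidate multilinear derivatives, Taylor remainders) is the right one, but there is a genuine gap at the very first and most critical step, the local Lipschitz estimate. You bound $|\nabla V(\xi_\ell)| \le C(1+|\xi_\ell|)$ and then invoke Cauchy--Schwarz, but the constant ``$1$'' is fatal on an infinite lattice: nothing in the assumptions makes $\nabla V(\mathbf{0})$ vanish ($V(\mathbf{0})=0$ and the point symmetry do not force it), so your argument at that point needs $\sum_{\ell} |Du(\ell)-Dv(\ell)| \lesssim \|\D u - \D v\|_{L^2}$, i.e.\ control of an $\ell^1$ quantity by an $\ell^2$ norm. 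This is false: for $w \in \mathcal{U}_0$ with $|\D w| \approx \epsilon$ on a ball of radius $R$, the left-hand side scales like $\epsilon R^2$ while the right-hand side scales like $\epsilon R$. So no constant uniform in $u,v$ exists for your chain of inequalities, even though the inequality you ultimately state happens to be true. The same flaw recurs in your differentiability argument at $k=1$: the formula $\sum_\ell \nabla V(Du(\ell))\cdot Dv(\ell)$ does not define a bounded linear functional of $\D v \in L^2$, because $\nabla V(Du(\ell)) \to \nabla V(\mathbf{0}) \neq 0$ in the far field.

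The missing idea, which is the heart of the cited proof, is renormalization by the telescoping linear term. For $w \in \mathcal{U}_0$ the gradient has compact support, so $w$ is constant outside a compact set; writing $w$ as that constant plus a compactly supported function and re-indexing the (finite) sums gives $\sum_{\ell} \nabla V(\mathbf{0})\cdot Dw(\ell) = 0$. Hence on $\mathcal{U}_0$ one may redefine
\begin{equation*}
\E^\a(u) = \sum_{\ell \in \Lambda}\big[ V(Du(\ell)) - \nabla V(\mathbf{0})\cdot Du(\ell)\big],
\end{equation*}
whose summand is genuinely quadratic: $|V(\bfg)-V(\bfh)-\nabla V(\mathbf{0})\cdot(\bfg-\bfh)| \le \|\nabla^2 V\|_{L^\infty}\,(|\bfg|+|\bfh|)\,|\bfg-\bfh|$. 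Now Cauchy--Schwarz and your stencil/gradient norm equivalence yield exactly the Lipschitz bound you wanted (without the ``$1+$''), and the unique continuous extension follows as you describe. Likewise the correct first variation is $\sum_\ell [\nabla V(Du(\ell)) - \nabla V(\mathbf{0})]\cdot Dv(\ell)$, which agrees with the naive formula on $\mathcal{U}_0$ by the same telescoping identity and is bounded since $|\nabla V(Du(\ell)) - \nabla V(\mathbf{0})| \le \|\nabla^2 V\|_{L^\infty}|Du(\ell)|$ is square-summable. For $k \ge 2$ your argument is essentially fine (the stencils $Du(\ell)$ lie in a fixed bounded set, so the relevant derivatives of $V$ are uniformly bounded there), and with the renormalization in place the Fr\'echet remainders are controlled directly by the uniform bounds on $\nabla^2 V,\dots,\nabla^5 V$, without the dominated-convergence discussion you sketch.
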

	\begin{proof}
		See Lemma 2.1 in \cite{EhrOrtSha:2013}.
	\end{proof}
	
	We model a point defect by including an external potential
	$f \in C^r(\dot{\mathcal{U}}^{1,2})$ with $\partial_{u(\ell)} f(u) = 0$ for all
	$|\ell| \geq R_f$, where $R_f$ is the defect core radius,
	and $f(u + c) = f(u)$ for all constants $c$. For instance, we can think of $f$
	modelling a substitutional impurity. See also \cite{2014-bqce, Or:2011a} for
	similar approaches.
	
	Then we seek the solution to
	\begin{equation}\label{eq:y_a}
	u^\a \in \arg \min \b\{ \E^\a(u) - f(u)  \sep u \in \dot{\mathcal{U}}^{1,2} \b\}.
	\end{equation}

	For $u, \varphi,\psi \in \dot{\mathcal{U}}^{1,2}$ we define the \emph{first and
		second variations} of $\mathcal{E}^\a$ by
	\begin{equation*}
	\begin{aligned}
	\langle \delta \mathcal{E}^\a(u), \varphi \rangle &:= \lim_{t\rightarrow 0}t^{-1}\left(\mathcal{E}^\a (u+t\varphi)-\mathcal{E}^\a(u)\right), \\
	\<\del^2 \E^\a (u) \varphi, \psi\> &:=\lim_{t\rightarrow 0}t^{-1}\left(\<\del\mathcal{E}^\a (u+t\varphi),\psi\>-\<\del \mathcal{E}^\a(u), \psi\>\right).
	\end{aligned}
	\end{equation*}
	We define analogously all energy functionals introduced in later sections.
	
	\subsection{GR-AC coupling}\label{sec:g23_coupling}
	
	The Cauchy--Born strain energy function \cite{acta, E:2005a}, corresponding to the interatomic
	potential $V$ is
	\begin{equation*}
	W(\mathsf{F}): = \frac{1}{\Omega_0} V(\mathsf{F}\bfa), \qquad \text{for }
	\mathsf{F} \in \mathbb{R}^{m \times 2},
	\end{equation*}
	where $\Omega_0 := \sqrt{3}/2$ is the volume of a unit cell of the lattice $\Lambda$. Hence $W(\mathsf{F})$ is the energy per volume of the homogeneous lattice $\mathsf{F}\Lambda$.
	It is shown in \cite{Hudson:stab} that, in a triangular lattice with anti-plane elasticity,
	$\D ^2 W({\bf{0}}) = \mu I^{2\times 2}$ for some constant $\mu > 0$
	(the shear modulus), which will be used in the formulation of BEM in later sections.

	Let $\As \subset \L$ be the set of all lattices sites for which we require
	full atomistic accuracy. We define the set of interface lattice sites as
	\begin{equation*}
	\Is := \b\{ \ell \in \L \setminus \As \bsep \ell +a_j \in \As \text{ for some } j \in \{1,\dots,6\} \b \}
	\end{equation*}
	and we define the remaining lattice sites as
	$\Cs : = \L \setminus (\As \cup \Is)$. Let $\Omega_\ell$ be the Voronoi cell
	associated with site $\ell$. We define the continuum region
	$\Omega^\c : = \R^2 \setminus \bigcup_{\ell \in \As \cup \Is} \Omega_{\ell}$;
	see Figure \ref{fig:decomp}. We also define $\Omega^\a $ and $\Omega^\i$ analogously.

	\begin{figure}
		\centering
		\includegraphics[width=0.65\linewidth]{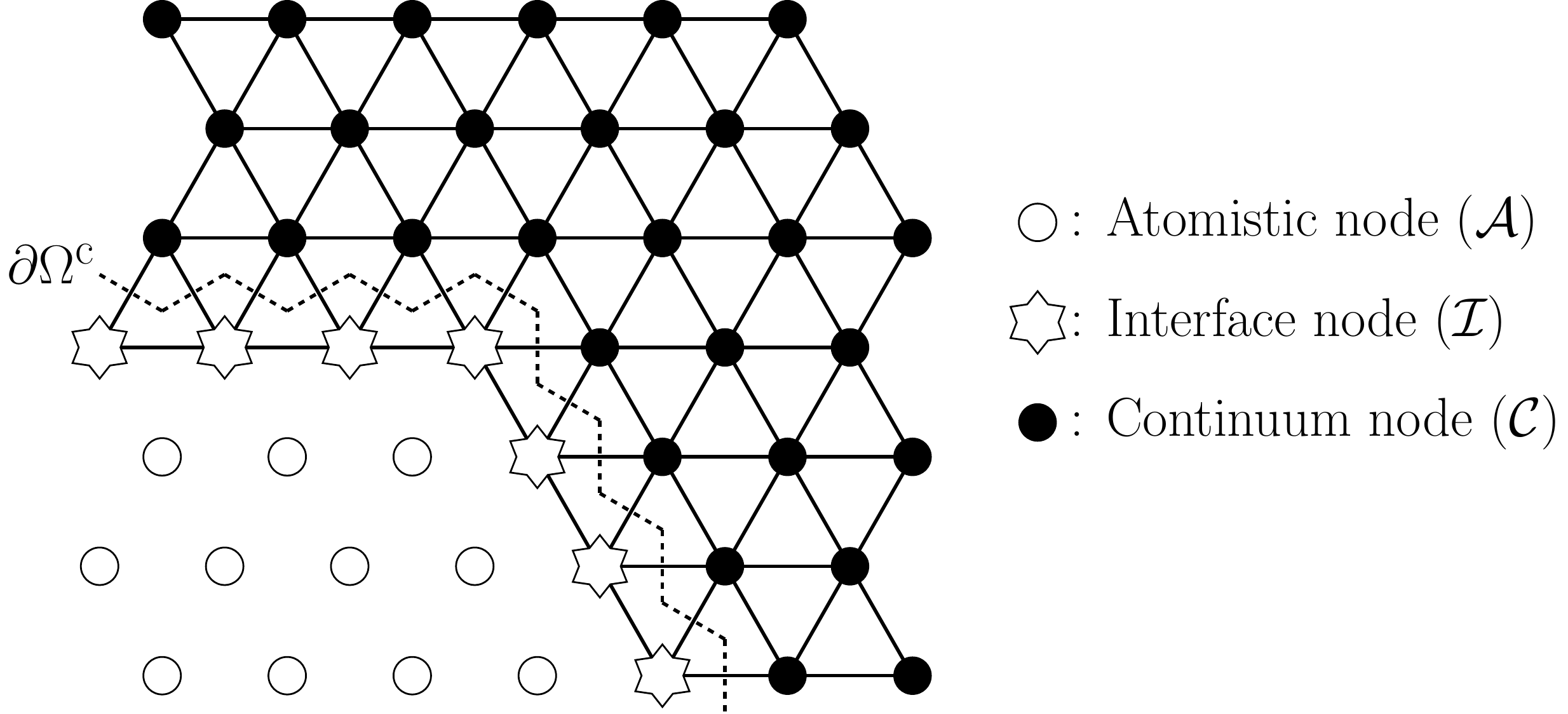}
		\caption{\small{The domain decomposition with a layer of interface atoms. This illustration is taken from \cite{2016-qcp2}.}}
		\label{fig:decomp}
	\end{figure}

	A general form for the GRAC-type a/c coupling energy \cite{E:2006,
		PRE-ac.2dcorners} is
	\begin{equation}\label{def:ac_general}
	\E^{\rm ac}(u) = \sum_{\ell\in \As} V(Du(\ell)) + \sum_{\ell\in \Is} V\left((\mathcal{R}_\ell D_ju(\ell))_{j=1}^6\right)+\int_{\Omega^\c} W(\D u(x)) \dx,
	\end{equation}
	where $\mathcal{R}_\ell D_j u(\ell): = \sum_{i=1}^6C_{\ell,j,i}D_i u(\ell)$.
	The parameters $C_{\ell,j,i}$ are determined such that the coupling
	scheme satisfies the ``patch tests'':
	
	$\mathcal{E}^{\rm ac}$ is \emph{locally energy consistent} if, for all $\mF \in \mathbb{R}^{m\times2}$,
	\begin{equation}\label{econs}
	V_\ell^i(\mF\bfa) = V(\mF\bfa) \quad \forall \ell \in \mathcal{I}.
	\end{equation}
	
	$\mathcal{E}^{\rm ac}$ is \emph{force consistent} if, for all $\mF \in \mathbb{R}^{m\times2}$,
	\begin{equation}\label{fcons}
	\del \E^\ac(u_\mF) = 0, \quad \text{where} \quad u_\mF(x) : = \mF x.
	\end{equation}
	
	$\mathcal{E}^{\rm ac}$ is \emph{patch test consistent} if it satisfies both \eqref{econs} and \eqref{fcons}.
	\medskip
	
	For simplicity we write
	$$V^i_\ell(Du(\ell)) : = V\left((\mathcal{R}_\ell D_ju(\ell))_{j=1}^6\right).$$
	Following \cite{PRE-ac.2dcorners} we make the following standing assumption (see
	Figure \ref{fig:interface_corners} for examples).
	
	{\bf (A0)} \emph{Each vertex $\ell \in \Is$ has exactly two neighbours in $\Is$, and at least one neighbour in $\Cs$.}
	
	\begin{figure}
		\centering
		\includegraphics[width=0.9\linewidth]{./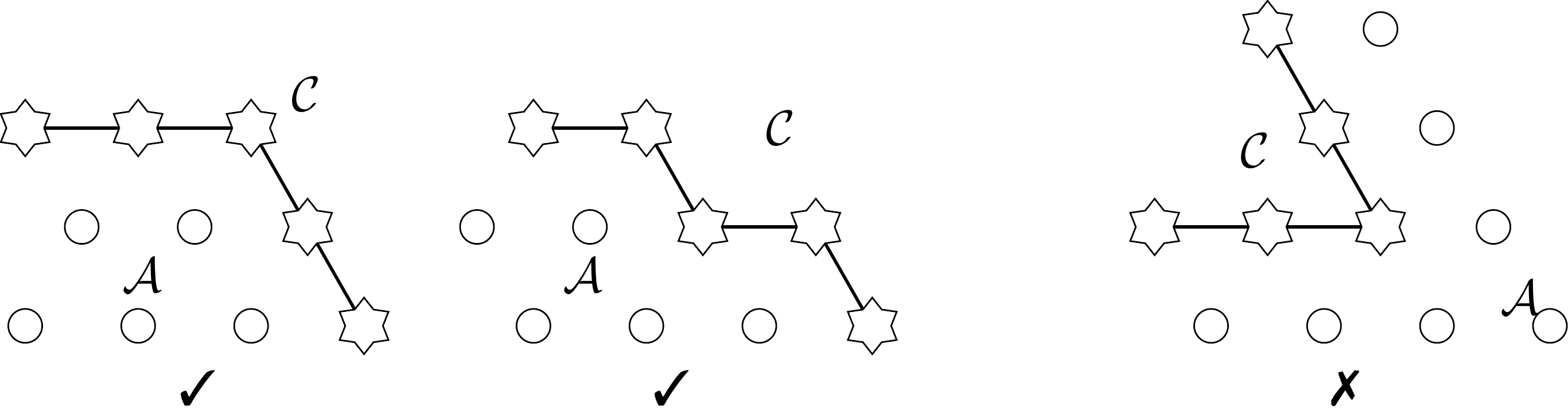}
		\caption{\small The first two configurations are allowed. The third configuration is not allowed as the interface atom at the corner has no nearest neighbour in the continuum region, and should instead be taken as an atomistic site. This illustration is taken from \cite{2016-qcp2}.}
		\label{fig:interface_corners}
	\end{figure}

	Under this assumption, the geometry reconstruction operator $\mathcal{R}_\ell$ is then defined by
	\begin{equation*}
	\begin{aligned}
	\mathcal{R}_\ell D_j y(\ell) &:= (1-\lambda_{\ell,j}) D_{j-1}y(\ell) + \lambda_{\ell,j} D_{j}y(\ell) + (1-\lambda_{\ell,j}) D_{j+1}y(\ell),\\
	\lambda_{x,j} & :=
	\cases{2/3, & x+a_j \in \Cs \\
		1, &\text{otherwise} };
	\end{aligned}
	\end{equation*}
	see Figure \ref{fig:general_interface}. The resulting a/c coupling method is called G23 and the corresponding energy functional $\E^{\rm g23}$. It is proven in \cite{PRE-ac.2dcorners} that this choice of coefficients (and only this choice) leads to patch test consistency \eqref{econs} and \eqref{fcons}.
	
	\begin{figure}
		\centering
		\includegraphics[width=0.55\linewidth]{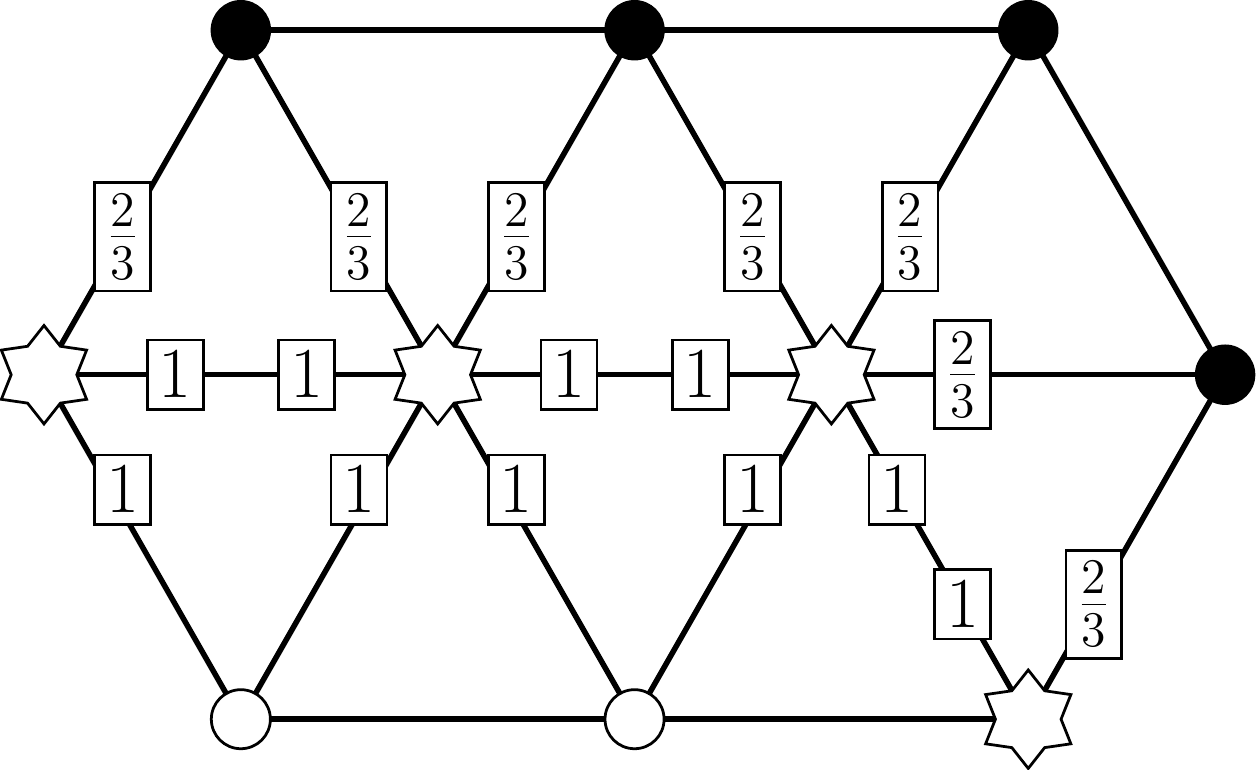}
		\caption{\small The geometry reconstruction coefficents $\lambda_{x,j}$ at the interface sites. This illustration is taken from \cite{2016-qcp2}.}
		\label{fig:general_interface}
	\end{figure}
	
	For future reference we decompose the canonical triangulation $\T$ as follows:
	\begin{equation}\label{def:ac}
	\begin{aligned}
	\mathcal{T}_\mathcal{A} :& = \{ T \in\mathcal{T} \,|\, T\cap(\mathcal{I}\cup \mathcal{C}) = \emptyset, \}, \\
	\mathcal{T}_\mathcal{C} :& = \{ T \in\mathcal{T}\,|\, T\cap(\mathcal{I}\cup \mathcal{A}) = \emptyset, \} \quad \text{and} \\
	\mathcal{T}_\mathcal{I}:& =\mathcal{T}\setminus (\mathcal{T}_\mathcal{C}\cup\mathcal{T}_\mathcal{A}).
	\end{aligned}
	\end{equation}
	
	\subsection{The finite element scheme} \label{sec:fem}
	In the atomistic region $\Omega^\a $ and the interface region $\Omega^\i $, the interactions are represented by
	discrete displacement maps, which are identified with their linear
	interpolant. In these regions there is no approximation error.
	
	On the other hand, as formulated in \eqref{def:ac_general}, the interactions are
	approximated by the Cauchy--Born energy in the continuum region $\Omega^\c_h$.
	
	Let $K>0$ be the inner radius of the atomistic region,
	\begin{equation*}
	K := \sup\b \{ r>0 \,|\, \mathcal{B}_r \cap \L \subset \As    \b \},
	\end{equation*}
	where $\mathcal{B}_r$ denotes the ball of radius $r$ centred at $0$.  We assume throughout that
	$K \geq R_f$ to ensure that the defect core is contained in the atomistic region.
	
	Let $\Omega_h$ be the entire computational domain and $N>0$ be the inner radius of $\Omega_h$, i.e.,
	\begin{equation*}
	N : = \sup \b \{r>0 \,|\, \mathcal{B}_r  \subset \Omega_h \b \}.
	\end{equation*}
	Let $\T _h $ be a finite element triangulation of $\Omega_h$ which satisfies, for $T \in \T_h $,
	\begin{equation*}
	T \cap (\As \cup \Is) \ne \emptyset \quad \Rightarrow \quad T \in  \T.
	\end{equation*}
	In other words, $\T _h$ and $\T $ coincide in the atomistic and interface regions, whereas in the continuum region the mesh size may increase
	towards the domain boundary.
	
	We observe that the concrete construction of $\T_h$ will be based on the choice of
	the domain parameters $K$ and $N$; hence we
	will write $\T_h(K,N)$ to emphasize this dependence. To eliminate the possibility of extreme angles on elements, we assume throughout that the family
	$(\T_h(K,N))_{K, N}$ is \emph{uniformly shape-regular}, i.e., there exists $c>0$
	such that,
	\begin{equation}
	\label{eq:unif-shape-reg}
	\diam  (T) ^2 \le c |T| , \quad \forall T\in \T_h(K, N), \forall K \le N,
	\end{equation}
	and that the induced mesh on $\Gamma_h := \pp \Omega_h$ is uniformly
	quasi-uniform.
	
	Hence in the analysis we can avoid deteriorated constants in finite element interpolation error
	estimates. In later sections we will again drop the parameters from the notation
	by writing $\T_h \equiv \T_h(K, N)$ but implicitly will always keep the
	dependence.

	Similar to \eqref{def:ac}, we denote the atomistic, interface and continuum
	elements by $\mathcal{T}_h^a,\mathcal{T}_h^i$ and $\mathcal{T}_h^c$,
	respectively. We observe that $\mathcal{T}_h^a = \mathcal{T}_\mathcal{A}$ and
	$\mathcal{T}_h^i = \mathcal{T}_\mathcal{I}$. We also let $\mathcal{N}_h$ be
	the number of degrees of freedom of $\T _h$.
	
	We define the finite element space of admissible displacements as
	\begin{equation}
	\label{eq:defn-Uh}
	\begin{aligned}
	\mathcal{U}_h : = \b\{u\in C(\R^2; \R^m) \sep\, u|_T \in \mathbb{P}^1(T) \text{ for } T\subset \mathcal{T}_h\}.
	\end{aligned}
	\end{equation}

\subsection{GR-AC coupling with BEM}\label{sect:model}
In \cite{2016-qcp2}, we employed finite element methods to approximate the solution. We applied P2-FEM with Dirichlet boundary conditions. To improve the far-field description, we now consider applying a boundary element method to approximate the far-field energy.

Recall that the general form (\ref{def:ac_general}) of the GR-AC type coupling energy is
\begin{equation*}
\E^{\rm ac}(u) = \sum_{\ell\in \As} V(Du(\ell)) + \sum_{\ell\in \Is} V^\i_\ell((Du(\ell)))+\int_{\Omega^\c} W(\D u(x)) \dx.
\end{equation*}

In the far-field we can approximate the Cauchy--Born energy by the
 linearization (recall that $\nabla^2 W(0) = \mu I_{2\times 2}$)
\begin{equation}\label{eq:E_lin}
\begin{aligned}
\E_{\rm lin}^{\rm ac}(u) &= \sum_{\ell\in \As} V(Du(\ell)) + \sum_{\ell\in \Is} V^\i_\ell((Du(\ell)))+\int_{\Omega_h^\c} W(\D u(x)) \dx + \int_{\R ^2\setminus \Omega_h}\frac{\mu}{2}|\D u|^2\\
&=: \E ^{\rm ac}_h(u) + \int_{\R ^2\setminus \Omega_h}\frac{\mu}{2}|\D u|^2.
\end{aligned}
\end{equation}

We seek the minimizer of above energy functional
\begin{equation*}
u^* : = \arg \min \{\E^{\rm ac}_{\rm lin} (u) -f(u)\,:\, u \in \dot{\mathcal{U}}^{1,2} \}.
\end{equation*}

For numerical simulations, we exploit the boundary integral to represent the
quadratic term $\int_{\R ^2\setminus \Omega_h}\frac{\mu}{2}|\D u|^2$.

In preparation, let $\Gamma_h : = \pp \Omega_h$, $\gamma^{\rm int}_0 : C(\Omega_h) \rightarrow C(\Gamma_h)$ and $\gamma^{\rm ext}_0 : C(\Omega_h^\complement) \rightarrow C(\Gamma_h)$ be the interior and exterior trace operators respectively, then we define
\begin{equation}\label{eq:Etot}
	\E^{\rm ac}_*(u) : = \E^{\rm ac}_h(u) +
	\inf_{ \gamma^{\rm ext}_0v = \gamma^{\rm int}_0u } \frac{\mu}{2} \int_{\Omega_h^{\complement}}|\D v|^2.
\end{equation}
Let
\begin{equation*}
\bar{u}_h : = \arg \min \{\E ^{\rm ac}_*(u): u\in \mathcal{U}_h\}
\end{equation*}
and
\begin{align}\label{def:v_inf}
v_h &:=\arg\min \left\{\int_{\Omega_h^\complement}|\D v|^2 \,:\, v\in \dot{H}^1(\Omega_h^\complement) , \gamma^{\rm ext}_0 v = \gamma^{\rm  int}_0\bar{u}_h \right\},
\\ \notag
u_h^* &:= \arg \min \left\{ \E ^{\rm ac}_{\rm lin} (u)-f(u) \,:\, u \in (\mathcal{U}_h +\dot{H}^1(\Omega_h^\complement)) \cap \dot{H}^1(\R^2) \right\},
\end{align}
then clearly $u_h^* = \bar{u}_h$ in $\Omega_h$ while $u_h^* = v_h$ in $\Omega_h^\complement$.
The inf-problem \eqref{def:v_inf} can be expressed as an exterior Laplace problem
\begin{equation}\label{eq:lap}
\begin{aligned}
-\Delta v &= 0, \quad \text{in } \Omega_h^{\complement}\\
v & = \gamma^{\rm int}_0\bar{u}_h, \quad \text{on } {\Gamma_h} \\
|v(x)-u_0| & = \mathcal{O}\left({\frac{1}{|x|}}\right) \quad \text{as }|x|\rightarrow \infty,
\end{aligned}
\end{equation}
where $u_0$ is a constant determined by the inner boundary condition $v  = \gamma^{\rm int}_0\bar{u}_h$ on $\Gamma_h$. This exterior Laplace problem can be solved by boundary integrals and be approximated by boundary element methods.

\subsubsection{Boundary integrals}

In this section, we formally outline how we combine the BEM with a/c coupling. Technical details will be presented in later sections. For a complete introduction to BEM we refer to \cite{Olaf:BEM}.

To define Sobolev spaces of fractional order, we use the Slobodeckij semi-norm.
\begin{definition}
	Let $\Gamma \subset \R ^d$ be a Lipschitz boundary, then for $0<s<1$, we define
	\begin{align*}
	|v|_{H^s(\Gamma)}& := \left(\int_\Gamma\int_{\Gamma} \frac{[v(x)-v(y)]^2}{|x-y|^{d-1+2s}}{\rm d} S(x) {\rm d} S(y)\right)^{1/2}, \\
	\|v\|_{H^s(\Gamma)}&: =  \left(\|v\|^2_{L^2(\Gamma)} + |v|^2_{H^s(\Gamma)} \right)^{1/2}, \quad \text{and}\\
	H^s_{(\Gamma)} &: = \big\{ u\in L^2(\Gamma)\, | \, |v|_{H^s(\Gamma)} < \infty\big\}.
	\end{align*}
	For $0<s<1$, $H^{-s}(\Gamma)$ is defined as the dual space of $H^{s}(\Gamma)$:
	\begin{equation*}
	\|v\|_{H^{-s}(\Gamma)} :  = \sup_{0\neq w \in H^{s}(\Gamma)}\frac{\<v, w\>_{\Gamma}}{\|w\|_{H^{s}(\Gamma)}},
	\end{equation*}
	with respect to the duality pairing
	\begin{equation*}
	\<v, w\>_{\Gamma} := \int_{\Gamma} v(x)w(x)\dx .
	\end{equation*}
\end{definition}


Using the Trace Theorem (see Theorem \ref{thm: trace}), we can conclude that for $u_h \in \mathcal{U}_h\subset H^{1}(\Omega_h)$,
\begin{equation*}
\gamma^{\rm int}_0u_h\in H^{1/2}(\Gamma_h) \quad \text{and}\quad \|\gamma^{\rm int}_0u_h\|_{H^{1/2}(\Gamma_h)} \le C_{\Omega_h}\|u_h\|_{H^1(\Omega_h)}.
\end{equation*}

In addition to the trace operators $\gamma_0^{\rm int}$ and $\gamma_0^{\rm ext}$, we  define the interior and exterior conormal derivative, for $x\in \Gamma_h$, by
\begin{align*}
\gamma^{\rm int}_1 u(x) &:= \lim_{\Omega_h\ni y \rightarrow x\in \Gamma_h } n(y) \cdot \D u(y),\quad \text{and}\\
\gamma^{\rm ext}_1 u(x) &:= \lim_{\Omega_h^\complement\ni y \rightarrow x\in \Gamma_h } n(y) \cdot \D u(y),
\end{align*}
where $n$ is the outward unit normal vector to $\Omega_h$, i.e. pointing \emph{into} $\Omega_h^\complement$.

Denote the fundamental solution to the Laplace operator in 2D by $G(x,y)$, i.e.
\begin{equation*}
G(x,y): = -\frac{1}{2\pi}\log |x-y|.
\end{equation*}
For $y_0\in \Omega_h$ and $R>2 {\rm diam}(\Omega_h)$, let $B_R(y_0)$ be a ball centred at $y_0$ with radius $R$. Then, by Green's First Identity, we can solve the exterior Laplapce problem \eqref{eq:lap} using the following representation formula, for $x\in B_R(y_0)\setminus \bar{\Omega}_h$,
\begin{equation*}
\begin{aligned}
v(x)  &=  \int_{\Gamma_h}( \gamma^{\rm ext}_0\bar{u}_h)(y)\gamma^{\rm ext}_{1,y}G(x,y)\dd S(y)- \int_{\Gamma_h}G(x,y) \gamma^{\rm ext}_{1}v(y) \dd S(y) + \\
&\quad + \int_{\pp B_R(y_0)} G(x,y)\gamma^{\rm int}_1v(y)\dd S(y) - \int_{\pp B_R(y_0)}\gamma^{\rm ext}_{1,y}G(x,y) \gamma^{\rm int}_0 v(y)\dd S(y).
\end{aligned}
\end{equation*}
Taking limit $R\rightarrow \infty$ gives, for $x \in \Omega_h^\complement$,
\begin{equation}\label{eq:lap_sol}
v(x)  =  u_0 + \int_{\Gamma_h}( \gamma^{\rm ext}_0\bar{u}_h)(y)\gamma^{\rm ext}_{1,y}G(x,y)\dd S(y)- \int_{\Gamma_h}G(x,y) \gamma^{\rm ext}_{1}v(y) \dd S(y),
\end{equation}
where $u_0$ is the far-field constant in \eqref{eq:lap}.

Let us define the following boundary integrals, for $x \in \R^2 \setminus \Gamma_h$,
\begin{equation*}
\begin{aligned}
A\psi(x) &:= \int_{\Gamma_h}  G(x,y) \psi(y)\dd S(y) \quad \text{(single layer potential)},\\
B\psi (x) &:= \int_{\Gamma_h} \psi  (y)  \gamma^{\rm int}_{1,y} G(x,y)\dd S(y)\quad \text{(double layer potential)}. \\
\end{aligned}
\end{equation*}

Then for  $x \in {\Gamma_h}$ we define
\begin{align*}
Vu(x) &:= \gamma^{\rm int}_0 (Au)(x),  \quad Ku(x) := \gamma^{\rm int}_0 (Bu)(x) \\
K'u(x) &:=  \gamma^{\rm int}_1(Au)(x), \quad Du(x) := -\gamma^{\rm int}_1(Bu)(x).
\end{align*}
Applying the exterior trace operator and the exterior conormal operator to \eqref{eq:lap_sol} gives, for $x\in \Gamma_h$,
\begin{align}
\gamma^{\rm int}_0 v(x) &= u_0 + \lambda(x) \gamma^{\rm int}_0\bar{u}_h + (K\gamma^{\rm int}_0\bar{u}_h)(x) - V(\gamma_1^{\rm ext} v)(x), \label{eq: bdry_sys_1}\\
\gamma^{\rm ext}_1 v(x) & = (1-\lambda(x))\gamma_1^{\rm ext}v(x) - (K'\gamma_1^{\rm ext}( v) (x) - (D\gamma_0^{\rm int} \bar{u}_h)(x), \label{eq: bdry_sys_2}
\end{align}
where by Lemma 6.8 in \cite{Olaf:BEM}
\begin{equation*}
\lambda(x) := \lim_{\epsilon\rightarrow 0}\frac{1}{2\pi \epsilon}\int_{y\ni \Omega_h:|y - x| = \epsilon} \dd S(y)  = \frac{1}{2} \text{ a.e}.
\end{equation*}

We observe that the Neumann data $\gamma_1^{\rm ext} v $ can be obtained from the Dirichlet data $\gamma^{\rm int}_0\bar{u}_h$ via \eqref{eq: bdry_sys_1} and \eqref{eq: bdry_sys_2} up to constant $u_0$. To make sure that the operator $V$ is bijective, we need the following restriction on the boundary spaces.

Let us define subspaces
\begin{align*}
H_*^{-1/2}(\Gamma_h) := &\{ w\in H^{-1/2}(\Gamma_h) : \< w,1\>_{\Gamma_h} = 0\} \quad \text{and}\\
H_*^{1/2}(\Gamma_h): = &\{ v\in H^{1/2}(\Gamma_h): v = V(w) \text{ for some } w\in H^{-1/2}_*\}.
\end{align*}
Then Lemma \ref{lem:iso} shows that $V:H_*^{-1/2}(\Gamma_h)\rightarrow H_*^{1/2}(\Gamma_h)$ is an isomorphism and consequently $u_0 = 0$.

\begin{remark}
	For any Lipschitz boundary $\Gamma$, there exist an unique $w_\Gamma \in H^{-1/2}(\Gamma) \setminus H^{-1/2}_*(\Gamma)$ such that
	$\<w_\Gamma, 1\>_\Gamma = 1$ and
	\begin{equation}\label{eq:proj_to_*}
		u - \<u, w_\Gamma\>_{\Gamma} \in H^{1/2}_*(\Gamma),
		\qquad \text{ for any } u\in H^{1/2}(\Gamma).
	\end{equation}
	Its derivation is shown in \cite[\S 6.6.1]{Olaf:BEM}. 
\end{remark}

\medskip

Therefore \eqref{eq: bdry_sys_1} gives
\begin{equation*}
-\gamma_1^{\rm ext} v= V^{-1}(-K+\tfrac{1}{2} I)\gamma^{\rm int}_0\bar{u}_h, \quad \text{if } \gamma^{\rm int}_0\bar{u}_h \in H_*^{1/2}(\Gamma_h).
\end{equation*}
Denote $ g^{-1} := V^{-1}(-K+\tfrac{1}{2}I)$, which is called \emph{Steklov--Poincar\'{e} operator}. Then the total energy (\ref{eq:Etot}) is equivalent to, for $u \in \mathcal{U}_h \cap H_*^{1/2}(\Gamma_h)$,
\begin{equation}\label{eq:E_tot*}
\E^{\rm ac}_* (u) \equiv \E^{\rm tot}(u) : = \E^{\rm ac}_h(u) + \frac{\mu}{2}  \int_{\Gamma_h} (\gamma^{\rm int}_0 u) g^{-1} (\gamma^{\rm int}_0 u).
\end{equation}
Theorem \ref{thm:+ve_def} establishes that Steklov--Poincar\'{e} operator $g^{-1}:  H_*^{1/2}(\Gamma_h)\rightarrow H_*^{-1/2} (\Gamma_h) $ is positive definite. Lemma \ref{lem:g_invariant} shows that $g^{-1}$ is in fact in-variant under rescaling. In addition, in order to ensure that the regularity constants are independent of the size of the boundary $\Gamma_h$, we employ a rescaling argument in Section \ref{sect:rescale} to introduce another fractional norm on the boundary: for $u\in H^{1/2}(\Gamma_h)$
\begin{equation*}
\|u\|^2_{H^{1/2}_{\Gamma_h}} : = \big[\tfrac{1}{2}{\rm diam}(\Gamma_h)\big]^{-1}\|u\|^{2}_{L^2(\Gamma_h)} +|u|^2_{H^{1/2}(\Gamma_h)}.
\end{equation*}
By Lemma \ref{lem:g-1_rescale} we have that for all $u\in H^{1/2}_*(\Gamma_h)$
\begin{equation*}
\<g^{-1}u, u\>  \ge C_1 \|u\|^2_{ H^{1/2}_{\Gamma_h}} \quad \text{and}\quad  \|g^{-1}u\|_{H^{-1/2}_{\Gamma_h}} \le C_2\|u\|_{H^{-1/2}_{\Gamma_h}},
\end{equation*}
where $C_1$ and $C_2$ are independent of the radius of $\Omega_h$.

Now we take in account of the displacement inside $\Omega_h$ to introduce the following norm for the error analysis. For $u \in \mathcal{U}_h \cap H_*^{1/2}(\Gamma_h)$, define
\begin{equation}\label{eq:def_complete_norm}
\|u\|^2_E : = \|\D u\|^2_{L^2(\Omega_h)} + \|u\|^2_{H^{1/2}_{\Gamma_h}}.
\end{equation}
It is clear that this norm is rescale in-variant.

\subsubsection{Boundary element method}
\label{sec:bem}
We introduce a numerical discretization scheme to approximate the boundary integral equations. Let
\begin{equation*}
S^0_h(\Gamma_h) = \text{\rm span}\big\{\phi_k^0\big\}_{k = 1}^M \subset H^{-1/2}_*(\Gamma_h),
\end{equation*}
where $\phi_k^0$ are piecewise constant basis functions on the discretized boundary with elements $\mathcal{T}_h \cap \Gamma_h$.
For a Dirichlet data $u \in H^{1/2}_*(\Gamma)$, we define $g_h^{-1}u: = \bar{v}_h\in S^0_h(\Gamma_h)$ as the solution to
\begin{equation}\label{eq: def_g_h}
\<V \bar{v}_h, \tau_h\> = \<(K+(\lambda -1)I)u,\tau_h\>, \quad \text{for all }\tau_h\in S^0_h(\Gamma_h).
\end{equation}
Then we define
\begin{equation}\label{eq:Etot_h}
\E ^{\rm tot}_h(u_h): = \E^{\rm ac}_h(u_h) + \frac{\mu}{2}  \int_{\Gamma_h}  (\gamma^{\rm int}_0u_h) g_h^{-1}(\gamma^{\rm int}_0 u_h),
\end{equation}
where $\gamma^{\rm int}_0u_h \in S^0_h(\Gamma_h) $.
We seek the solution to
\begin{equation}\label{eq: solution_uh}
u_h : = \arg \min \{\E^{\rm tot}_h (u) - f(u)\,:\, u \in \U_h^*\},
\end{equation}
where 	\begin{equation*}
\mathcal{U}_h^*  := \big\{u_h \in \mathcal{U}_h \cap S^0_h(\Gamma_h)\, : \, u_h |_{\Gamma_h}\in H_*^{1/2}(\Gamma_h)\big\},
\end{equation*}
and the error estimate $\|\D u_h- \D \tilde{u}^\a \|$ in a suitable norm.

For the simplicity of analysis, we impose the following assumption on the
boundary $\Gamma_h$ and the atomistic triangulation $\T$:

\medskip

{\bf (A3) }  The boundary $\Gamma_h$ is aligned with the canonical triangulation $\T$ in the sense that, for all $T \in \T $,
\begin{enumerate}
	\item[(a)] $T \cap \Gamma_h \neq \emptyset \implies {\rm int} (T) \cap \Gamma_h = \emptyset$.
	\item[(b)] Let $\mathcal{V}_{\rm FEM}$ be the set of vertices of $\T _h$, and $\mathcal{V}_{\rm can}$ be the set of vertices of $\T$, then
	$\mathcal{V}_{\rm FEM } \cap \Gamma_h \subset \mathcal{V}_{\rm can}$.
\end{enumerate}

\medskip \noindent {\bf (A3)} is  employed in
\S~\ref{sec: test_fun_v} for the construction of a dual interpolant. We expect
that, without it, the main results are still true, but would require some
additional technicalities to prove. For the sake of clarity we therefore impose
{\bf (A3)} to emphasize the main concepts of the error analysis.

\section{Preliminaries}\label{sect:prelim}

%
%
%

In order to measure the ``smoothness'' of displacement maps $u \in \dot{\U }^{1,2}$, we review from \cite{2014-bqce} a smooth interpolant $\tilde{u} $,  namely a
$C^{2,1}$-conforming multi-quintic interpolant.

\begin{lemma}\label{lem:smooth_int}
	(a) For each $u :\L \rightarrow \R ^m$, there exists a unique $\tilu \in C^{2,1}(\R^2; \R^m)$ such that, for all $\ell \in \L$,
	\begin{equation*}
	\begin{aligned}
	\left.\tilu\right|_{\ell + \mA(0,1)^2} &\text{ is a polynomial of degree 5}, \\
	\tilu (\ell) &= u(\ell),\\
	\partial_{a_i} \tilu (\ell) &= \tfrac{1}{2}\left(u(\ell + a_i) - u(\ell - a_i)\right),\\
	\partial_{a_i}^2 \tilu (\ell)& = u(\ell+a_i) - 2u(\ell) +u(\ell- a_i),
	\end{aligned}
	\end{equation*}
	where $i \in \{1,2\}$ and $\partial_{a_i}$ is the derivative in the
	direction of $a_i$.

	(b) Moreover, for $q \in [1,\infty]$, $0\le j \le 3$,
	\begin{equation}\label{eq: int_ineq_proof}
	\|\D^j \tilu \|_{L^q(\ell+\mA(1,0)^2)} \lesssim \|D^j u \|_{\ell^q\left(\ell + \mA\{-1,0,1,2\}^2 \right)} \quad \text{and } \quad  |D^j u(\ell)| \lesssim\|\D^j \tilu \|_{L^1(\ell+\mA(-1,1)^2)},
	\end{equation}
	where $D$ is the difference operator defined in \eqref{def:diff_op}.  In
	particular,
	\begin{equation*}\|\D \tilde{u}\|_{L^q} \lesssim \|\D u\|_{L^q} \lesssim \|\D \tilde{u}\|_{L^q},
	\end{equation*}
	where $ u $ is identified with its piecewise affine interpolant.
\end{lemma}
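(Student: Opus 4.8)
The plan is to reduce everything to a fixed reference configuration and then recognise the interpolant as a tensor-product quintic Hermite element. First I would apply the affine change of variables $x = \mA\xi$, under which $\L = \mA\Z^2$ becomes $\Z^2$, the cells $\ell + \mA(0,1)^2$ become unit squares, the directional derivatives $\partial_{a_1}, \partial_{a_2}$ become the coordinate partials, and the class $C^{2,1}$ is preserved (with equivalent constants, since $\mA$ is fixed and invertible). Thus it suffices to construct, on $\Z^2$, a function that is bi-quintic on each unit square and matches prescribed value, first, and second pure derivatives at the nodes, and then to prove (b) in reference coordinates; the stated anisotropic estimates follow by transporting back with $\mA$.

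For part (a) I would build the interpolant by a tensor product of one-dimensional quintic Hermite interpolation. On $[0,1]$ the quintic Hermite problem --- prescribing value, first and second derivative at both endpoints --- is unisolvent, since it amounts to an invertible $6\times 6$ system for the six coefficients of a quintic. The key point is that the nodal data listed in the lemma are only the \emph{pure} derivatives; to obtain a genuine bi-quintic element (whose $36$ cell degrees of freedom correspond to $\partial_1^i\partial_2^j$, $0\le i,j\le 2$, at the four corners) one must also supply the mixed data, which I would define by the products of the one-dimensional symmetric-difference operators used to generate the pure data. With the full Hermite data fixed at every node, the tensor product of the two one-dimensional operators yields a unique bi-quintic on each cell, giving existence and uniqueness of $\tilu$ cell-by-cell. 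The main obstacle is the $C^{2,1}$ gluing across shared edges: along a common edge the restrictions of $\tilu$ and of each $\partial_1^i\partial_2^j\tilu$ with $i,j\le 2$ are one-dimensional quintics determined entirely by the Hermite data at the two shared end-nodes, so they agree from both sides, giving $C^2$ matching; since $\tilu$ is a polynomial inside each cell, its second derivatives are then globally continuous and piecewise smooth, hence Lipschitz, i.e. $\tilu\in C^{2,1}$. Verifying that the mixed data are assigned consistently so that this edge argument closes is exactly the step I would treat with care.

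For part (b) I would argue by finite-dimensional norm equivalence on a fixed stencil, exploiting lattice translation invariance to make the constants uniform in $\ell$. Fix $j\in\{0,1,2,3\}$. On the reference cell the restriction $\D^j\tilu$ is a fixed linear image of the nodal data on the $16$-point stencil $\{-1,0,1,2\}^2$ (the corners' Hermite data reach one node beyond the cell), while $D^j u$ on the same stencil is another fixed linear image of that data; both $u\mapsto \|\D^j\tilu\|_{L^q}$ and $u\mapsto \|D^ju\|_{\ell^q}$ are seminorms on the finite-dimensional data space. To get $\|\D^j\tilu\|_{L^q}\lesssim\|D^ju\|_{\ell^q}$ it then suffices that the kernel of the right-hand seminorm is contained in that of the left, which holds because vanishing of all $j$-th differences on the stencil forces $u$ to coincide there with a polynomial of degree $<j$, and the quintic Hermite interpolant reproduces such polynomials exactly (here one uses that the symmetric first and second differences equal the exact derivatives on quadratics, so the interpolation data of a low-degree polynomial are the exact derivative data). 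Equivalence of seminorms on a finite-dimensional space, together with translation invariance, then yields the first inequality, and the reverse-type bound $|D^ju(\ell)|\lesssim\|\D^j\tilu\|_{L^1(\ell+\mA(-1,1)^2)}$ follows identically after checking the analogous kernel inclusion. Finally, summing the $j=1$ estimate over all cells --- the stencils have bounded overlap --- and comparing $\D u$ on each triangle with the first differences $Du$ at its vertices gives $\|\D\tilu\|_{L^q}\lesssim\|\D u\|_{L^q}\lesssim\|\D\tilu\|_{L^q}$, completing the statement.
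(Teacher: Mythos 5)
The paper itself gives no argument here: its ``proof'' is a pointer to Lemma 6.1 of \cite{2016-qcp2}, whose underlying argument is exactly the one you reconstruct --- reduction to reference coordinates, cell-wise tensor-product (bi-)quintic Hermite interpolation with nodal data given by centred differences, edge-matching for $C^{2}$ conformity, and a finite-dimensional seminorm-equivalence/polynomial-reproduction argument, made uniform by translation invariance, for part (b). So in approach you coincide with the proof the paper is invoking, and parts (a)-existence and (b) of your argument are sound.

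There is, however, one genuine gap, and it concerns uniqueness. You correctly notice that the lemma prescribes only the \emph{pure} derivatives, and that to run the tensor-product construction you must also assign the mixed data $\partial_1\partial_2$, $\partial_1^2\partial_2$, $\partial_1\partial_2^2$, $\partial_1^2\partial_2^2$ at each node (you take products of the one-dimensional difference operators). But you then conclude ``existence and uniqueness of $\tilu$ cell-by-cell'', which only says that the bi-quintic is unique \emph{once the full nine-fold Hermite data per node are fixed}. That is not the uniqueness asserted in the lemma, which quantifies over all $C^{2,1}$ piecewise multi-quintic functions satisfying the pure-derivative conditions alone --- and that stronger statement is in fact false. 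Take $u \equiv 0$ and let $w$ be the piecewise bi-quintic obtained by setting every nodal Hermite datum to zero except $\partial_1\partial_2 w(\ell_0) = 1$ at one node $\ell_0$: explicitly $w = H(x_1)H(x_2)$ on the four cells touching $\ell_0$, where $H$ is the quintic with $H(0)=H''(0)=0$, $H'(0)=1$ and $H=H'=H''=0$ at the other endpoint, and $w = 0$ on all other cells. By your own edge-matching argument (single-valued nodal data forces the traces of the transversal derivatives up to order two to agree from both sides), $w \in C^{2,1}$, it is multi-quintic on every cell, and all of its pure nodal data vanish; yet $w \neq 0$. Hence uniqueness holds only after the mixed-difference conditions are added to the list of interpolation conditions --- which is how the original construction in \cite{2014-bqce} is formulated, and which your construction implicitly supplies; the restatement in the present paper simply omits them. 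Your write-up should therefore prove the lemma \emph{with the mixed conditions included in the characterisation}, rather than claim the literal statement. A second, minor, point: in part (b) the kernel-inclusion argument for $|D^j u(\ell)| \lesssim \|\D^j \tilu\|_{L^1(\ell+\mA(-1,1)^2)}$ needs the lattice points entering $D^j u(\ell)$ (e.g.\ $\ell + 3a_1$ when $j=3$) to be seen by $\tilu$ on the stated patch, so the patches/stencils must be enlarged slightly or the differences re-centred; this is bookkeeping, not a conceptual obstacle.
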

\begin{proof}
	This is the same proof as Lemma 6.1 in \cite{2016-qcp2}.
\end{proof}

\subsection{Properties of Steklov--Poincar\'{e} operator}\label{sect: steklov-poincare}
As mentioned in Section \ref{sect:model}, we require some regularity properties of the Steklov--Poincar\'{e} operator $g^{-1}$. First of all we have the following trace theorem.

\begin{theorem}[Trace Theorem]\label{thm: trace}
	For $\frac{1}{2}<s\le1$,  the interior trace operator
	\begin{equation*}
	\gamma_0 :H^{s}(\Omega_h) \rightarrow H^{s-1/2}(\Gamma_h)
	\end{equation*}
	is bounded satisfying
	\begin{equation*}
	\|\gamma_0 v\|_{H^{s-1/2}(\Gamma_h)}\le c_T\|v\|_{H^s(\Omega_h)}, \quad \forall v\in H^s(\Omega_h).
	\end{equation*}
\end{theorem}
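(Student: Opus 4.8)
The statement is the classical trace theorem on a bounded Lipschitz domain, and in the present setting $\Omega_h$ is a bounded polygon, so $\Gamma_h$ is a compact Lipschitz curve; a complete proof may be found in \cite{Olaf:BEM}. The plan is to reduce to a flat model problem by localization, and then to prove the trace estimate on a half-plane using the Fourier characterization of $H^s$, which is equivalent to the Slobodeckij seminorm used in the definition of $H^{s-1/2}(\Gamma_h)$.

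First I would set up localization. Since $\Gamma_h$ is compact, cover it by finitely many open sets $\{U_i\}$, each meeting $\Gamma_h$ in a single Lipschitz graph; away from the corners of the polygon these graphs are straight segments, while each corner is contained in one $U_i$ on which $\Omega_h \cap U_i$ is a Lipschitz wedge. Choose a partition of unity $\{\chi_i\}$ subordinate to $\{U_i\}$ and write $\gamma_0 v = \sum_i \gamma_0(\chi_i v)$; since multiplication by the smooth, compactly supported $\chi_i$ is bounded on $H^s(\Omega_h)$ and on $H^{s-1/2}(\Gamma_h)$, it suffices to bound each $\|\gamma_0(\chi_i v)\|_{H^{s-1/2}(\Gamma_h \cap U_i)}$ by $\|\chi_i v\|_{H^s(\Omega_h \cap U_i)}$. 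A bi-Lipschitz chart $\Phi_i$ flattens $\Omega_h \cap U_i$ onto a bounded piece of the upper half-plane $\R^2_+ = \{x_2 > 0\}$ and maps $\Gamma_h \cap U_i$ onto a segment of $\{x_2 = 0\}$; on the straight pieces $\Phi_i$ is a rigid motion. Because $0 < s-1/2 \le 1/2 < 1$, the Slobodeckij seminorm is invariant up to constants under bi-Lipschitz maps of the boundary, and $\|\cdot\|_{H^s}$ on the domain is likewise stable under bi-Lipschitz changes of variables, so the problem is transported to the flat model with equivalent norms.

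For the model problem, extend $w := (\chi_i v)\circ \Phi_i^{-1} \in H^s(\R^2_+)$ to $Ew \in H^s(\R^2)$ with $\|Ew\|_{H^s(\R^2)} \lesssim \|w\|_{H^s(\R^2_+)}$ (a Stein/Seeley extension on the Lipschitz image), so that $\gamma_0 w$ is the restriction of $Ew$ to $\{x_2=0\}$. For Schwartz $g = Ew$, Cauchy--Schwarz in $\xi_2$ gives
\[
|\widehat{\gamma_0 g}(\xi_1)|^2 \le \Big(\int_\R (1+|\xi|^2)^{-s}\,\dd\xi_2\Big)\Big(\int_\R (1+|\xi|^2)^{s}|\hat g(\xi)|^2\,\dd\xi_2\Big),
\]
where $|\xi|^2 = \xi_1^2 + \xi_2^2$. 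The first factor converges precisely because $s > 1/2$ and equals $c\,(1+\xi_1^2)^{1/2-s}$; multiplying by $(1+\xi_1^2)^{s-1/2}$ and integrating in $\xi_1$ yields $\|\gamma_0 g\|_{H^{s-1/2}(\R)} \lesssim \|g\|_{H^s(\R^2)}$, using the equivalence of the Fourier-weighted norm with the Slobodeckij seminorm on $\R$. Chaining the estimates through $\Phi_i$, summing over $i$, and extending from smooth functions to all of $H^s(\Omega_h)$ by density (which also shows $\gamma_0$ is well defined) gives the claim with a constant $c_T$ depending only on the finite cover and the Lipschitz charts.

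The main obstacle is not the half-plane estimate, which is the elementary computation above and pinpoints the role of $s > 1/2$, but rather the careful treatment of the polygon's corners: verifying that the intrinsic Slobodeckij seminorm on $\Gamma_h$ is comparable, under the bi-Lipschitz corner charts, to the flat seminorm used in the model estimate, together with the existence of a bounded $H^s$-extension operator on the Lipschitz pieces. Since the subsequent rescaling lemmas (e.g.\ Lemma~\ref{lem:g-1_rescale}) handle the dependence of constants on the size of $\Gamma_h$ separately, here it suffices to establish boundedness for a fixed domain, and I would ultimately invoke the Lipschitz-domain trace theorem of \cite{Olaf:BEM}.
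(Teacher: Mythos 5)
Your proof is correct, but it takes a genuinely different route from the paper, which offers no argument at all: the paper's entire proof is the sentence ``This is a standard result, see for example \cite{Adam:soblev}.'' You instead reconstruct the classical argument --- a partition of unity over finitely many boundary charts, bi-Lipschitz flattening with the polygon's corners treated as Lipschitz wedges, a bounded $H^s$-extension, and the half-plane estimate in which Cauchy--Schwarz in $\xi_2$ yields the factor $c\,(1+\xi_1^2)^{1/2-s}$, finite precisely because $s>\frac12$. That computation is the standard one and is sound, and its merit is that it exposes exactly where the hypothesis $s>\frac12$ and the Lipschitz geometry of $\Gamma_h$ enter. What the paper's choice buys is economy: the theorem is classical and independent of anything specific to this setting, and the only issue that matters downstream --- how the constants depend on the size of $\Gamma_h$ --- is not addressed by the trace theorem at all but by the separate rescaling argument of \S\ref{sect:rescale}, as you correctly observe at the end. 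Note also that your closing paragraph concedes the genuinely technical ingredients (bi-Lipschitz invariance of the Slobodeckij seminorm at the corners, the extension operator on Lipschitz pieces) to \cite{Olaf:BEM}, so both proofs ultimately rest on the same literature; yours simply unpacks one more layer before deferring.
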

\begin{proof}
	This is a standard result, see for example \cite{Adam:soblev}.
\end{proof}

The boundedness and ellipticity of the boundary integrals are proved in \cite{Costabel} for Lipschitz domains.

\begin{theorem}[Boundedness] \label{thm:bdry_bdd}
	The boundary integral operators
	\begin{align*}
	V :& H^{-1/2+s}(\Gamma_h) \rightarrow H^{1/2+s}(\Gamma_h), \\
	K :& H^{1/2+s}(\Gamma_h) \rightarrow H^{1/2+s}(\Gamma_h), \\
	K': &	H^{1/2+s}(\Gamma_h) \rightarrow H^{1/2+s}(\Gamma_h),\\
	D :& H^{1/2+s}(\Gamma_h) \rightarrow H^{-1/2+s}(\Gamma_h)
	\end{align*}
	are bounded for all $s\in \left(-\frac{1}{2},\frac{1}{2}\right)$.
	%
	%
	%
\end{theorem}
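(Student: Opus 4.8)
The plan is to prove the boundedness of the boundary integral operators $V$, $K$, $K'$, $D$ by reducing them to their well-established mapping properties on the whole scale of Sobolev spaces, rather than re-deriving them from scratch. These are classical results for the Laplace operator on Lipschitz boundaries; the cleanest route is to cite the mapping properties established by Costabel in \cite{Costabel} for the single layer, double layer, adjoint double layer, and hypersingular operators, and then fill in the interpolation argument that extends the endpoint cases to the open interval $s\in(-\tfrac12,\tfrac12)$.

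\medskip\noindent\textbf{Step 1.} First I would recall the three basic ingredients. The single layer potential $A$ and double layer potential $B$ map into harmonic functions in $\Omega_h$ and $\Omega_h^\complement$, and the four boundary operators arise as traces and conormal traces of these potentials, exactly as defined before \eqref{eq: bdry_sys_1}. The fundamental mapping property, due to Costabel for Lipschitz domains, is that the single layer operator satisfies $V:H^{-1/2+s}(\Gamma_h)\to H^{1/2+s}(\Gamma_h)$ boundedly for $s$ in a neighbourhood of $0$; the same reference supplies the analogous statements for $K$, $K'$ and $D$ with the stated source and target spaces.

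\medskip\noindent\textbf{Step 2.} The core of the argument is an interpolation/duality package. For a Lipschitz boundary the operators are known to be bounded at the endpoints $s=-\tfrac12+\varepsilon$ and $s=\tfrac12-\varepsilon$; boundedness throughout the open interval $s\in(-\tfrac12,\tfrac12)$ then follows by complex interpolation of the Sobolev scale $H^t(\Gamma_h)$. The duality relations $K'=K^\ast$ (with respect to the $L^2(\Gamma_h)$ pairing) and the symmetry of $V$ and $D$ let me deduce the mapping properties of $K'$ and $D$ from those of $K$ and $V$ by transposition, so I would only interpolate for $V$ and $K$ directly and obtain the remaining two by duality. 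Care must be taken that the implied constants depend only on the Lipschitz character of $\Gamma_h$; here the uniform quasi-uniformity and shape-regularity hypotheses \eqref{eq:unif-shape-reg} guarantee a uniform Lipschitz constant across the family $(\T_h(K,N))_{K,N}$, so the bounds are uniform in the domain parameters.

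\medskip\noindent The step I expect to be the genuine obstacle is controlling the dependence of the operator norms on the size of $\Gamma_h$. Since the statement is used later with boundaries whose diameter grows like $N$, a naive citation gives constants that may blow up under rescaling. The right fix is to invoke the rescaling invariance developed in Section \ref{sect:rescale} (cf.\ Lemma \ref{lem:g_invariant}), reducing every estimate to a fixed reference boundary of unit diameter on which Costabel's bounds apply with a single universal constant, and then transporting back. Everything else---the trace theorem, the interpolation, the duality---is standard and can be cited as in \cite{Costabel, Olaf:BEM}.
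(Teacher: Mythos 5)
The paper's entire proof of this theorem is the citation you give in Step 1: Theorem 1 of \cite{Costabel} already asserts the boundedness of all four operators $V$, $K$, $K'$, $D$ on the stated spaces for the whole range $|s|\le \frac12$ on a Lipschitz boundary, so your core approach coincides with the paper's and nothing further is required. The problem is that the extra structure you build in Step 2 is both redundant and, in part, incorrect. The interpolation step is circular as framed: boundedness at $s=-\frac12+\varepsilon$ and $s=\frac12-\varepsilon$ for every $\varepsilon>0$ \emph{is} the assertion on the open interval, so there is nothing left to interpolate (interpolation would be meaningful only from the closed endpoints $s=\pm\frac12$, which Costabel covers). More seriously, transposition cannot deliver $D$ from $V$ and $K$: with respect to the $L^2(\Gamma_h)$ pairing one has $K'=K^*$, which does yield the mapping property of $K'$ from that of $K$, but $D$ is self-adjoint, so duality only relates $D$ at parameter $s$ to $D$ at parameter $-s$. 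An independent argument for $D$ --- via the mapping properties of the double layer potential and the boundedness of the conormal trace, which is what \cite{Costabel} actually does --- is unavoidable.

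Your final paragraph raises a legitimate concern, but it does not belong to this theorem, whose statement permits the constants to depend on $\Gamma_h$; uniformity in the domain size is handled separately, in Section \ref{sect:rescale}. Moreover, your proposed fix --- rescaling every operator to a unit reference boundary and transporting the constants back --- fails in 2D for $V$ on the full space $H^{-1/2}(\Gamma_h)$: the kernel $-\frac{1}{2\pi}\log|x-y|$ is not homogeneous under dilation, and rescaling produces an additive term proportional to $\log R\,\langle \phi,1\rangle_{\Gamma}$. That term vanishes only for mean-zero densities, which is exactly why the paper introduces the subspaces $H^{-1/2}_*(\Gamma_h)$ and $H^{1/2}_*(\Gamma_h)$ and proves scale invariance only for the Steklov--Poincar\'{e} form on these subspaces (Lemma \ref{lem:g_invariant}), rather than for $V$, $K$, $K'$, $D$ individually. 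Finally, the uniform Lipschitz character of the family of boundaries comes from the geometry of the domains $\Omega_h$ themselves (lattice-aligned polygons scaled with $N$), not from the mesh regularity assumption \eqref{eq:unif-shape-reg}, which constrains the triangulation rather than the shape of $\pp\Omega_h$.
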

\begin{proof}
	See Theorem 1 in \cite{Costabel}.
\end{proof}

\begin{theorem}[Ellipticity] \label{thm: ellip}The operators $V$ and $D$ are strongly elliptic in the sense that, there exists $C^V, C^D >0$ such that for all $v\in H^{-1/2}_*(\Gamma_h), u \in H^{1/2}_*(\Gamma_h)$
	\begin{align}
	\<V v, v\> &\ge C^V \|v\|^2_{H^{-1/2}(\Gamma_h)},\\
	\<D u,u\> &\ge C^D \|u\|^2_{H^{1/2}(\Gamma_h)}.
	\end{align}
\end{theorem}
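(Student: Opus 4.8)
The plan is to reduce both inequalities to the nonnegativity of a full-space Dirichlet energy, by representing the two bilinear forms as the Dirichlet energies of the associated layer potentials, and then to recover the coercive lower bounds by duality and (for $D$) the classical two-dimensional tangential-derivative identity. The ingredients I would fix first are the standard jump relations of the Laplace kernel $G$, consistent with the orientation conventions of \S\ref{sect:model} (with $n$ the outward normal of $\Omega_h$): the single layer potential $A\psi$ has a continuous trace, $\gamma_0^{\rm int}(A\psi) = \gamma_0^{\rm ext}(A\psi) = V\psi$, while its conormal derivative jumps by the density, $\gamma_1^{\rm int}(A\psi) - \gamma_1^{\rm ext}(A\psi) = \psi$; dually, the double layer potential $B\phi$ has a continuous conormal derivative $\gamma_1^{\rm int}(B\phi) = \gamma_1^{\rm ext}(B\phi) = -D\phi$, while its trace jumps by $\gamma_0^{\rm ext}(B\phi) - \gamma_0^{\rm int}(B\phi) = \phi$.

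For the single layer operator I would set $w := A\psi$ with $\psi \in H^{-1/2}_*(\Gamma_h)$. The mean-zero constraint $\<\psi,1\>_{\Gamma_h} = 0$ annihilates the logarithmic part of $G$ at infinity, so $w(x) = \mathcal{O}(1/|x|)$ and $\D w \in L^2(\R^2)$. Applying Green's first identity to the harmonic function $w$ separately on $\Omega_h$ and on $\Omega_h^\complement$ (the contribution at infinity vanishes by the decay) gives $\int_{\Omega_h}|\D w|^2 = \<\gamma_0^{\rm int}w, \gamma_1^{\rm int}w\>_{\Gamma_h}$ and $\int_{\Omega_h^\complement}|\D w|^2 = -\<\gamma_0^{\rm ext}w, \gamma_1^{\rm ext}w\>_{\Gamma_h}$. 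Summing and inserting the jump relations,
\begin{equation*}
\<V\psi,\psi\> = \|\D w\|^2_{L^2(\R^2)} \ge 0,
\end{equation*}
which gives nonnegativity. For the coercive bound I would argue by duality: given $\phi \in H^{1/2}(\Gamma_h)$, choose a compactly supported extension $\tilde\phi \in H^1(\R^2)$ with $\gamma_0\tilde\phi = \phi$ and $\|\D\tilde\phi\|_{L^2(\R^2)} \lesssim \|\phi\|_{H^{1/2}(\Gamma_h)}$ (a bounded right inverse of the trace, Theorem~\ref{thm: trace}); integrating by parts against $w$ on each side of $\Gamma_h$ yields $\<\psi,\phi\>_{\Gamma_h} = \int_{\R^2}\D w\cdot\D\tilde\phi$, so by Cauchy--Schwarz $\<\psi,\phi\>_{\Gamma_h} \le \<V\psi,\psi\>^{1/2}\,\|\D\tilde\phi\|_{L^2(\R^2)} \lesssim \<V\psi,\psi\>^{1/2}\|\phi\|_{H^{1/2}}$. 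Taking the supremum over $\|\phi\|_{H^{1/2}}=1$ gives $\|\psi\|^2_{H^{-1/2}(\Gamma_h)} \lesssim \<V\psi,\psi\>$, the claimed estimate with some $C^V>0$.

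For the hypersingular operator I would run the same energy identity with $w := B\phi$, $\phi \in H^{1/2}_*(\Gamma_h)$, which is again harmonic and decays at infinity. Green's identity on $\Omega_h$ and $\Omega_h^\complement$, together with the continuity of the conormal data and the trace jump above, gives
\begin{equation*}
\<D\phi,\phi\> = \|\D w\|^2_{L^2(\R^2)} \ge 0.
\end{equation*}
For the lower bound I would invoke the two-dimensional reduction to the single layer operator: on a closed curve one has $\<D\phi,\phi\> = \<V\,\pp_t\phi,\pp_t\phi\>$, where $\pp_t$ is the arclength-tangential derivative, so the coercivity of $V$ just established gives $\<D\phi,\phi\> \gtrsim \|\pp_t\phi\|^2_{H^{-1/2}(\Gamma_h)} \gtrsim |\phi|^2_{H^{1/2}(\Gamma_h)}$; since $H^{1/2}_*(\Gamma_h)$ excludes the constants, the seminorm is equivalent to the full norm there, yielding the bound with some $C^D>0$. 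Alternatively, without the tangential identity, one bounds $\|\phi\|_{H^{1/2}} \le \|\gamma_0^{\rm int}w\|_{H^{1/2}} + \|\gamma_0^{\rm ext}w\|_{H^{1/2}}$ via the Trace Theorem and a Poincaré argument modulo constants.

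The genuinely nontrivial point is not the nonnegativity, which is automatic from the energy representation, but the coercive lower bounds in the two-dimensional setting, where the logarithmic kernel makes the operators degenerate on constants and on nonzero-mean densities. This is exactly why the statement is restricted to the starred spaces: the constraint $\<\psi,1\>_{\Gamma_h}=0$ restores the decay of the single layer potential at infinity and removes the kernel of $V$, while working in $H^{1/2}_*(\Gamma_h)$ removes the constants from the kernel of $D$ so that the $H^{1/2}$-seminorm becomes an equivalent norm. Keeping the extension and Poincaré constants uniform in $\diam(\Gamma_h)$ is the remaining delicate issue, but this is handled separately by the rescaling argument of \S\ref{sect:rescale}, so here it suffices to produce constants $C^V, C^D>0$ for a fixed boundary $\Gamma_h$.
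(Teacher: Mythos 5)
Your proof is correct, and for the single layer operator it follows in substance the same route as the paper's proof in Appendix~\ref{app:ellip}: both hinge on the energy representation $\<V\psi,\psi\> = \|\D(A\psi)\|^2_{L^2(\R^2)}$, obtained from Green's identity applied separately in $\Omega_h$ and $\Omega_h^\complement$, the conormal jump relation (Lemma~\ref{lem:jump}), and the decay of $A\psi$ at infinity supplied by the mean-zero constraint (Lemma~\ref{lem:decay_H*}). The only difference there is the final step: the paper passes from $\|\D w\|^2_{L^2}$ to $\|\psi\|^2_{H^{-1/2}(\Gamma_h)}$ by citing the continuity bounds \eqref{eq: bdd_int} and \eqref{eq:bdd_ext} for $\gamma_1^{\rm int},\gamma_1^{\rm ext}$ applied to the jump $\psi = \gamma_1^{\rm int}w - \gamma_1^{\rm ext}w$, whereas you prove precisely that continuity inline by the duality argument with a bounded, compactly supported $H^1$-extension of an arbitrary $\phi \in H^{1/2}(\Gamma_h)$; these are the same argument at different levels of encapsulation. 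Where you genuinely depart from the paper is the operator $D$: the paper merely asserts that an ``analogous argument'' with the double layer potential works, while you reduce the coercivity of $D$ to the just-proven coercivity of $V$ via the two-dimensional identity $\<D\phi,\phi\> = \<V\pp_t\phi,\pp_t\phi\>$, noting that $\pp_t\phi$ automatically has zero mean on the closed curve $\Gamma_h$ and that $\|\pp_t\phi\|_{H^{-1/2}(\Gamma_h)}$ is equivalent to $|\phi|_{H^{1/2}(\Gamma_h)}$. This route has a real advantage in 2D: the double layer potential decays only like $1/|x|$, hence fails to lie in $L^2(\Omega_h^\complement)$, so the trace-theorem step left implicit in the paper's ``analogous argument'' actually requires a localisation to a bounded annulus plus a Poincar\'e argument; your reduction bypasses this, at the price of importing the classical identity $D = \pp_t V \pp_t$ and the seminorm equivalence as known facts rather than proving them. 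One point worth tightening: your claim that the $H^{1/2}$-seminorm is equivalent to the full norm on $H^{1/2}_*(\Gamma_h)$ because that space ``excludes the constants'' should be justified via the structure recorded in \eqref{eq:proj_to_*}, namely that $H^{1/2}_*(\Gamma_h)$ is the kernel of the bounded functional $u \mapsto \<u,w_{\Gamma_h}\>_{\Gamma_h}$ and contains no nonzero constant, after which the standard quotient/compactness argument yields the required Poincar\'e-type inequality.
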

\begin{proof}
	This is a special case of Theorem 2 in \cite{Costabel}. In 2D, the far-field constant $u_0$ vanishes only if the Dirichlet data is in the subspace $H_*^{1/2}(\Gamma_h)$. See Appendix \ref{app:ellip} for a full proof.
\end{proof}

\begin{lemma}\label{lem:iso}
	$V:  H^{-1/2}_*(\Gamma_h) \rightarrow  H^{1/2}_*(\Gamma_h)$ is an isomorphism.
\end{lemma}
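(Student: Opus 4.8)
The plan is to read off the isomorphism from the two structural facts already recorded: the boundedness of $V$ (Theorem \ref{thm:bdry_bdd} with $s=0$) and its ellipticity on the mean-zero subspace (Theorem \ref{thm: ellip}). First I would note that $H^{-1/2}_*(\Gamma_h)$ is the kernel of the functional $w\mapsto \<w,1\>_{\Gamma_h}$, which is bounded on $H^{-1/2}(\Gamma_h)$ because $1\in H^{1/2}(\Gamma_h)$; hence $H^{-1/2}_*(\Gamma_h)$ is a closed subspace and a Hilbert space in its own right. By the very definition of $H^{1/2}_*(\Gamma_h)$ as the image $V\big(H^{-1/2}_*(\Gamma_h)\big)$, the operator $V$ is surjective onto $H^{1/2}_*(\Gamma_h)$. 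Thus surjectivity is automatic, and it only remains to show that $V$ is a bounded bijection with bounded inverse between these two spaces.

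The boundedness $\|Vv\|_{H^{1/2}(\Gamma_h)}\le C\|v\|_{H^{-1/2}(\Gamma_h)}$ is immediate from Theorem \ref{thm:bdry_bdd}, so the crux is the matching lower bound. For $v\in H^{-1/2}_*(\Gamma_h)$ I would combine ellipticity with the duality estimate for the pairing between $H^{-1/2}(\Gamma_h)$ and $H^{1/2}(\Gamma_h)$: since $Vv\in H^{1/2}(\Gamma_h)$, the definition of the dual norm gives $\<Vv,v\>_{\Gamma_h}\le \|v\|_{H^{-1/2}(\Gamma_h)}\,\|Vv\|_{H^{1/2}(\Gamma_h)}$, while Theorem \ref{thm: ellip} gives $\<Vv,v\>_{\Gamma_h}\ge C^V\|v\|^2_{H^{-1/2}(\Gamma_h)}$. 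Dividing by $\|v\|_{H^{-1/2}(\Gamma_h)}$ yields the coercive lower bound $\|Vv\|_{H^{1/2}(\Gamma_h)}\ge C^V\|v\|_{H^{-1/2}(\Gamma_h)}$.

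This lower bound does all the remaining work. It shows that $V$ is injective on $H^{-1/2}_*(\Gamma_h)$ (if $Vv=0$ then $v=0$), and since the range of $V$ is exactly $H^{1/2}_*(\Gamma_h)$, the inverse $V^{-1}:H^{1/2}_*(\Gamma_h)\to H^{-1/2}_*(\Gamma_h)$ is well defined and satisfies $\|V^{-1}w\|_{H^{-1/2}(\Gamma_h)}\le (C^V)^{-1}\|w\|_{H^{1/2}(\Gamma_h)}$ for every $w=Vv$. Together with the upper bound this proves that $V$ is a topological isomorphism; as a by-product the lower bound also shows that $H^{1/2}_*(\Gamma_h)$ is closed, hence complete, in $H^{1/2}(\Gamma_h)$, so the notion of isomorphism is the expected one. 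I do not anticipate a genuine obstacle: the only point requiring care is the duality step, where one must respect that ellipticity holds solely on $H^{-1/2}_*(\Gamma_h)$ — in 2D the far-field constant $u_0$ obstructs coercivity on all of $H^{-1/2}(\Gamma_h)$ — so I must check throughout that $v$ lies in $H^{-1/2}_*(\Gamma_h)$ and that $Vv$ lands in $H^{1/2}_*(\Gamma_h)$, both of which hold by construction.
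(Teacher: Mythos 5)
Your proof is correct, and it rests on the same two pillars as the paper's: the boundedness of $V$ (Theorem~\ref{thm:bdry_bdd} with $s=0$) and its ellipticity on the mean-zero subspace (Theorem~\ref{thm: ellip}). The difference is in the functional-analytic glue. The paper closes the argument by citing the Lax--Milgram theorem, whereas you replace that citation with the direct duality estimate
\begin{equation*}
C^V\|v\|^2_{H^{-1/2}(\Gamma_h)} \le \langle Vv,v\rangle_{\Gamma_h} \le \|Vv\|_{H^{1/2}(\Gamma_h)}\,\|v\|_{H^{-1/2}(\Gamma_h)},
\end{equation*}
exploiting that surjectivity onto $H^{1/2}_*(\Gamma_h)$ is definitional. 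Your route is slightly more self-contained: Lax--Milgram produces an isomorphism $H^{-1/2}_*(\Gamma_h)\to \big(H^{-1/2}_*(\Gamma_h)\big)^*$, and passing from that statement back to $V$ viewed as a map into $H^{1/2}(\Gamma_h)$ requires a small extra step (ruling out that two candidate images differ by a constant, i.e.\ by an element annihilating $H^{-1/2}_*(\Gamma_h)$), which your quantitative lower bound makes unnecessary. As by-products you also obtain the explicit inverse bound $\|V^{-1}w\|_{H^{-1/2}(\Gamma_h)}\le (C^V)^{-1}\|w\|_{H^{1/2}(\Gamma_h)}$ and the closedness (hence completeness) of $H^{1/2}_*(\Gamma_h)$, both of which the paper leaves implicit; the latter is genuinely worth recording, since without it the word ``isomorphism'' between these spaces would be ambiguous.
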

\begin{proof}
	This is a consequence of Theorems \ref{thm:bdry_bdd} and \ref{thm: ellip} and the Lax-Milgram Theorem.
\end{proof}
%
Therefore, with the boundedness and ellipticity, we can prove the positive definiteness of the Steklov--Poincar\'{e} operator.

\begin{theorem}\label{thm:+ve_def}	The Steklov--Poincar\'{e} operator $g^{-1}: H^{1/2}_*(\Gamma_h)\rightarrow  H^{-1/2}_*(\Gamma_h)$ is well-defined. Furthermore,  there exist $C^g_1, C^g_2>0$ such that for all $u \in H^{1/2}_*(\Gamma_h) $
	\begin{equation}
	\<g^{-1}u, u\>  \ge C^g_1 \|u\|^2_{ H^{1/2}(\Gamma_h)} \quad \text{and}\quad  \|g^{-1}u\|_{H^{-1/2}(\Gamma_h)} \le C^g_2\|u\|_{H^{1/2}(\Gamma_h)}.
	\end{equation}
\end{theorem}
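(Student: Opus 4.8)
The plan is to reduce $g^{-1} = V^{-1}(\tfrac12 I - K)$ to the building-block operators $V$, $K$, $K'$, $D$, whose mapping and ellipticity properties are already supplied by Theorems~\ref{thm:bdry_bdd} and \ref{thm: ellip} and Lemma~\ref{lem:iso}, treating boundedness through the non-symmetric form and coercivity through a symmetric representation. \emph{Well-definedness} comes first: for $u \in H^{1/2}_*(\Gamma_h)$ the exterior problem \eqref{eq:lap} has vanishing far-field constant $u_0 = 0$ by Lemma~\ref{lem:iso}, so that $g^{-1}u = -\gamma_1^{\rm ext}v$ is the exterior Neumann trace of the harmonic extension $v$ of $u$. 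Since $v$ is harmonic in $\Omega_h^\complement$ with the decay in \eqref{eq:lap}, the flux through a large circle vanishes in the limit and the divergence theorem yields $\<\gamma_1^{\rm ext}v, 1\>_{\Gamma_h} = 0$, i.e. $g^{-1}u \in H^{-1/2}_*(\Gamma_h)$; equivalently $(\tfrac12 I - K)u = V(-\gamma_1^{\rm ext}v) \in H^{1/2}_*(\Gamma_h)$ lies in the domain of $V^{-1}$. This fixes the mapping $g^{-1}: H^{1/2}_*(\Gamma_h) \to H^{-1/2}_*(\Gamma_h)$.

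For boundedness I would use the non-symmetric form directly: taking $s = 0$ in Theorem~\ref{thm:bdry_bdd} gives $K: H^{1/2}(\Gamma_h) \to H^{1/2}(\Gamma_h)$ bounded, hence $\tfrac12 I - K$ is bounded on $H^{1/2}(\Gamma_h)$, and by the step above it maps $H^{1/2}_*(\Gamma_h)$ into $H^{1/2}_*(\Gamma_h)$; composing with the bounded inverse $V^{-1}: H^{1/2}_*(\Gamma_h) \to H^{-1/2}_*(\Gamma_h)$ from Lemma~\ref{lem:iso} gives the second estimate with some $C^g_2$.

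For positive definiteness I would pass to the symmetric representation
\begin{equation*}
g^{-1} = D + (\tfrac12 I - K')\,V^{-1}(\tfrac12 I - K).
\end{equation*}
This need not be quoted from the literature: eliminating $\gamma_1^{\rm ext}v = -g^{-1}u$ between the two boundary integral equations \eqref{eq: bdry_sys_1} and \eqref{eq: bdry_sys_2} (with $\lambda = \tfrac12$, $u_0 = 0$) yields, respectively, $V g^{-1}u = (\tfrac12 I - K)u$ and $(\tfrac12 I + K')g^{-1}u = Du$, so that $D = (\tfrac12 I + K')V^{-1}(\tfrac12 I - K)$; adding $(\tfrac12 I - K')V^{-1}(\tfrac12 I - K)$ to both sides and using $(\tfrac12 I + K') + (\tfrac12 I - K') = I$ gives the displayed identity. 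Testing against $u$ and moving $K'$ onto its adjoint $K$ in the duality pairing,
\begin{equation*}
\<g^{-1}u, u\> = \<Du, u\> + \<V^{-1}(\tfrac12 I - K)u,\,(\tfrac12 I - K)u\>.
\end{equation*}
Writing $\psi := V^{-1}(\tfrac12 I - K)u = g^{-1}u \in H^{-1/2}_*(\Gamma_h)$, the second term equals $\<V\psi, \psi\> \ge C^V\|\psi\|^2_{H^{-1/2}(\Gamma_h)} \ge 0$ by the ellipticity of $V$ in Theorem~\ref{thm: ellip}, while the first is bounded below by $C^D\|u\|^2_{H^{1/2}(\Gamma_h)}$ by the ellipticity of $D$. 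Hence $\<g^{-1}u, u\> \ge C^D\|u\|^2_{H^{1/2}(\Gamma_h)}$, which is the first estimate with $C^g_1 = C^D$.

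The main obstacle is not the algebra but the consistent bookkeeping of the mean-zero subspaces $H^{\pm 1/2}_*(\Gamma_h)$. In two dimensions the logarithmic fundamental solution makes $V$ fail to be elliptic (indeed invertible) on the full trace space, so Theorem~\ref{thm: ellip} and Lemma~\ref{lem:iso} are only available after restricting to $H^{1/2}_*(\Gamma_h)$; I must therefore verify at each step that the arguments of $V^{-1}$ and the inner variable $\psi$ genuinely lie in the starred spaces. This is exactly what the vanishing of $u_0$ guarantees, and it is the only point where the argument departs from the textbook higher-dimensional case.
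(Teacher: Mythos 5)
Your proof is correct and follows essentially the same route as the paper: the same symmetric representation $g^{-1} = D + (\tfrac12 I - K')V^{-1}(\tfrac12 I - K)$ obtained by combining the two boundary integral equations, then dropping the non-negative term $\<V\psi,\psi\> \ge 0$ and invoking the ellipticity of $D$ for the lower bound, with the upper bound coming from the boundedness of $K$ and the bounded inverse of $V$ on the starred subspaces. If anything, you are slightly more careful than the paper in verifying that $(\tfrac12 I - K)u$ and $g^{-1}u$ genuinely lie in $H^{1/2}_*(\Gamma_h)$ and $H^{-1/2}_*(\Gamma_h)$ respectively (via the vanishing-flux argument), a point the paper's proof leaves implicit.
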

\begin{proof} Since $V: H^{-1/2}_*(\Gamma_h)\rightarrow H^{1/2}_*(\Gamma_h)$ is an isomorphism and $K:H^{1/2}(\Gamma_h)\rightarrow H^{1/2}(\Gamma_h)$ is bounded, we have $g^{-1} = V^{-1}(-K+(1-\lambda)I)$ is well-defined. The upper bound $C^g_2$ follows from the Lax-Milgram Theorem.

	For positive-definiteness, we use an analogous argument to that in \cite{Olaf:BEM}. We first observe that for any $u \in H^{1/2}_*(\Gamma_h)$, there exists an unique solution $v$ to the Laplace problem
	\begin{equation*}
	\begin{aligned}
	-\Delta v &= 0, \quad \text{in } \Omega_h^{\complement}\\
	v & = u, \quad \text{on } {\Gamma_h} \\
	|v(x)| & = \mathcal{O}\left({\frac{1}{|x|}}\right) \quad \text{as }|x|\rightarrow \infty
	\end{aligned}
	\end{equation*}
	with $g^{-1}u = -\gamma^{\rm int}v$.
	Similar to \eqref{eq: bdry_sys_1} and \eqref{eq: bdry_sys_2}, we have the  relationships
	\begin{align*}
	v(x) &= \lambda(x) u + (Ku)(x) - V(\gamma_1^{\rm ext} v)(x)\quad \text{and} \\
	\gamma^{\rm ext}_1 v(x) & = (1-\lambda(x))\gamma_1^{\rm ext}v(x) - (K'\gamma_1^{\rm ext}( v) (x) - Du(x).
	\end{align*}
	Combining these two equations we obtain an alternative representation for $g^{-1}$:
	\begin{equation*}
	g^{-1}(u)= 	-\gamma_1^{\rm ext}v  = Du + (K'-(1-\lambda)I) V^{-1}(K - (1-\lambda)I) (u).
	\end{equation*}
	Consequently we have
	\begin{align*}
	\<g^{-1} u, u\> &= \<D u, u\> + \<V^{-1}(K - (1-\lambda)I)u, (K - (1-\lambda)I)u\>\\
	&\ge \<D u, u\>\\
	&\ge C^D \|u\|^2_{H^{1/2}(\Gamma_h)}. \qedhere
	\end{align*}

\end{proof}

\subsection{Re-scaling of the boundary integrals}\label{sect:rescale}
In the analysis of a/c coupling methods, we are concerned with the convergence rate against the size of the domain. Therefore, we need to explore how boundary integrals scale with the size of the domain.

Suppose that $f_1 : \Gamma_1:= \pp \Omega_1\rightarrow \R ^2$ and $f_1\in H^{1/2}(\pp B_1)$, where $\Omega_1$ is a Lipschitz domain with radius 1. Let $f_R: \Gamma_R: = \pp \Omega_R \rightarrow \R ^2$ and $f_R(x): = f_1(x/R)$, where $ \Omega_R = R\Omega_1$. Then we have
\begin{equation*}
\|f_R\|^2_{L^2(\Gamma_R)} = \int_{\Gamma_R}|f_R(x)|^2\dx = \int_{\Gamma_R} |f_1(x/R)|^2\dx = \int_{\Gamma_1}|f_1(y)|^2 R \dy = R\|f_1\|^2_{L^2(\Gamma_1)},
\end{equation*}
while
\begin{align*}
		|f_R|^2_{H^{1/2}(\Gamma_R)}& := \int_{\Gamma_R}\int_{\Gamma_R} \frac{[f_R(x)-f_R(y)]^2}{|x-y|^{2}}{\rm d} S(x) {\rm d} S(y)\\
			& = \int_{\Gamma_R}\int_{\Gamma_R}\frac{[f_1(x/R)-f_1(y/R)]^2}{|x-y|^{2}}{\rm d} S(x) {\rm d} S(y)\\
			& = \int_{\Gamma_1}\int_{\Gamma_1}\frac{[f_1(x')-f_1(y')]^2}{|Rx'-Ry'|^{2}}R^2{\rm d} S(x') {\rm d} S(y')\\
			& =  \int_{\Gamma_1}\int_{\Gamma_1}\frac{[f_1(x')-f_1(y')]^2}{|x'-y'|^{2}}{\rm d} S(x') {\rm d} S(y')\\
			& = |f_1|^2_{H^{1/2}(\Gamma_1)}.
\end{align*}
Thus we define a re-scaled norm in $H^{1/2}(\Gamma_h)$,
\begin{equation}\label{def:rescale_norm}
	\|f\|^2_{H^{1/2}_{\Gamma_h}}
		:=  \big[\tfrac12 {\rm diam}(\Gamma_h)\big]^{-1} \|f\|^{2}_{L^2(\Gamma_h)} +|f|^2_{H^{1/2}(\Gamma_h)},
\end{equation}
then we have $\|f_1\|_{H^{1/2}(\Gamma_1)} = \|f_R\|_{H^{1/2}_{\Gamma_R}}$
with $R = \frac12 {\rm diam}(\Gamma_h)$.
Similarly, we define a rescaled $H^1$ norm
\begin{equation}\label{def:rescale_norm_h1}
\|f\|^2_{H^{1}_{\Gamma_h}}
	:= \big[\tfrac12 {\rm diam}(\Gamma_h)\big]^{-1} \|f\|^{2}_{L^2(\Gamma_h)}
		+ \big[\tfrac12 {\rm diam}(\Gamma_h)\big] \|\D f\|^2_{L^{2}(\Gamma_h)},
\end{equation}
then we have $\|f_1\|_{H^{1}(\Gamma_1)} = \|f_R\|_{H^{1}_{\Gamma_h}}$
with $R = \frac12 {\rm diam}(\Gamma_h)$.

\begin{lemma} \label{lem:g_invariant} Let $V_1$, $K_1$, $V_R$, $K_R$ be the boundary integrals $V$ and $K$ on $\Gamma_1$ and $\Gamma_R$ respectively. Denote
	\begin{equation*}
	g_1^{-1} :=  V_1^{-1}(-K_1+(1 - \lambda)I)\quad \text{and} \quad  g_R^{-1} :=  V_R^{-1}(-K_R+(1 - \lambda)I).
	\end{equation*}
Then for $u_1 \in H^{1/2}_*(\Gamma_1)$ and $u_R := u_1(x/R)$, we have $u_R \in H^{1/2}_*(\Gamma_R)$ and
\begin{equation*}
\<g^{-1}_1 u_1, u_1\>_{\Gamma_1} = \<g^{-1}_R u_R, u_R\>_{\Gamma_R}.
\end{equation*}
\end{lemma}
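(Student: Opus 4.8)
The plan is to reduce the whole statement to the elementary scaling behaviour of the two boundary integral operators $V$ and $K$ under the similarity $x = Rx'$, and then to track how these combine in $g^{-1} = V^{-1}(-K+(1-\lambda)I)$. Throughout I write $S_R$ for the pullback sending a function $\phi_1$ on $\Gamma_1$ to $\phi_R := \phi_1(\cdot/R)$ on $\Gamma_R$; the computation leading to \eqref{def:rescale_norm} already shows that $S_R$ maps $H^{1/2}(\Gamma_1)$ and $H^{-1/2}(\Gamma_1)$ boundedly onto the corresponding spaces on $\Gamma_R$, and since $\<S_R w, 1\>_{\Gamma_R} = R\<w,1\>_{\Gamma_1}$ it preserves the zero-mean constraint, so it restricts to bijections $H^{\pm 1/2}_*(\Gamma_1)\to H^{\pm 1/2}_*(\Gamma_R)$. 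This last fact is what will give $u_R \in H^{1/2}_*(\Gamma_R)$.

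First I would record the scaling of $V$. Substituting $x=Rx'$, $y=Ry'$, $\dd S(y) = R\,\dd S(y')$ into the single layer operator and using $\log|Rx'-Ry'| = \log R + \log|x'-y'|$ gives, for $w_R = S_R w_1$,
\[
(V_R w_R)(Rx') = R\Big( (V_1 w_1)(x') - \tfrac{\log R}{2\pi}\<w_1,1\>_{\Gamma_1}\Big).
\]
This is the crux of the lemma and the only place where the logarithmic kernel misbehaves: the spurious $\log R$ term is exactly what the restriction $w_1 \in H^{-1/2}_*(\Gamma_1)$ annihilates, leaving the clean homogeneity $(V_R S_R w_1)(Rx') = R\,(V_1 w_1)(x')$. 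I expect this to be the main obstacle, in the sense that it is the only step where the special structure of the star-spaces is genuinely needed. Applying it to a density $w_1$ with $u_1 = V_1 w_1$ immediately shows $u_R = S_R u_1 = V_R(\tfrac1R S_R w_1)$ with $\tfrac1R S_R w_1 \in H^{-1/2}_*(\Gamma_R)$, hence $u_R \in H^{1/2}_*(\Gamma_R)$.

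Next I would record the scaling of $K$. Its kernel $n(y)\cdot\D_y G(x,y) = -\tfrac{1}{2\pi}\,n(y)\cdot(y-x)/|x-y|^2$ is homogeneous of degree $-1$, and because a similarity preserves unit normals ($n(Ry')=n(y')$ on $\Gamma_R = R\Gamma_1$) the factor $R$ from $\dd S$ cancels it, giving scale invariance $(K_R S_R u_1)(Rx') = (K_1 u_1)(x')$. Since $\lambda = \tfrac12$ a.e., the operator $(1-\lambda)I$ is trivially invariant. Hence, writing $\phi_1 := (-K_1+(1-\lambda)I)u_1 \in H^{1/2}_*(\Gamma_1)$ (well-defined by Theorem~\ref{thm:+ve_def}), we have $(-K_R+(1-\lambda)I)u_R = S_R\phi_1$.

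Finally I would assemble $g^{-1}$. The $V$-scaling, applied in reverse, forces $g_R^{-1}u_R = V_R^{-1}(S_R\phi_1) = S_R\big(\tfrac1R\, V_1^{-1}\phi_1\big) = S_R\big(\tfrac1R\, g_1^{-1}u_1\big)$, i.e. $(g_R^{-1}u_R)(Rx') = \tfrac1R (g_1^{-1}u_1)(x')$. Substituting this into the duality pairing and changing variables $x = Rx'$,
\[
\<g_R^{-1}u_R, u_R\>_{\Gamma_R} = \int_{\Gamma_1} \tfrac1R (g_1^{-1}u_1)(x')\, u_1(x')\, R\,\dd S(x') = \<g_1^{-1}u_1, u_1\>_{\Gamma_1},
\]
the factor $1/R$ carried by $g^{-1}$ cancelling the Jacobian $R$ from $\dd S$, which is the claimed invariance.
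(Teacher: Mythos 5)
Your proof is correct, and it takes a genuinely different route from the paper's for the decisive step. The first half --- showing $u_R \in H^{1/2}_*(\Gamma_R)$ by pulling a zero-mean density through $V$ and using $\<w_1,1\>_{\Gamma_1}=0$ to annihilate the $\log R$ anomaly --- coincides with the paper's argument (the paper's density $\phi_R := \tfrac1R\phi_1(\cdot/R)$ is exactly your $\tfrac1R S_R w_1$), and both treat the $K$-invariance as the same change-of-variables computation. The difference is in how the invariance of the $V^{-1}$ pairing is obtained: the paper never inverts the kernel relation; instead it represents $\<V_i^{-1}u_i, v_i\>_{\Gamma_i}$ as the sum of interior and exterior Dirichlet energies of harmonic extensions, via the Green's identities \eqref{eq:a_int}, \eqref{eq:a_ext} and the jump relation of Lemma \ref{lem:jump}, and then uses the scale invariance of the 2D Dirichlet integral. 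You instead invert the forward scaling $V_R S_R = R\, S_R V_1$ (valid on zero-mean densities) to get the commutation relation $V_R^{-1} S_R = \tfrac1R S_R V_1^{-1}$, and hence the pointwise identity $g_R^{-1}u_R = \tfrac1R S_R\, g_1^{-1}u_1$. Your route is more elementary and self-contained (it requires none of the Appendix machinery: jump relations, decay estimates, harmonic extensions), and it proves slightly more, namely invariance of $\<g^{-1}u, v\>$ against arbitrary test functions rather than just the diagonal quadratic form; the paper's variational route, on the other hand, is less tied to the explicit logarithmic kernel and would survive generalisation to operators whose fundamental solution is not available in closed form, since it only uses that the Steklov--Poincar\'e form equals a Dirichlet energy. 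One presentational caveat: in your opening paragraph you assert that $S_R$ restricts to a bijection $H^{1/2}_*(\Gamma_1)\to H^{1/2}_*(\Gamma_R)$ ``since it preserves the zero-mean constraint''; that implication is not valid as stated, because $H^{1/2}_*$ is defined as the image $V\big(H^{-1/2}_*\big)$, not by a mean-zero condition --- but this is harmless, since your next paragraph proves the claim correctly via the $V$-scaling identity.
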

\begin{proof}
	See Appendix \ref{app:proof_lem_rescale}.
\end{proof}

%
%
%
Using the re-scaled norm we have the following Lemma.

\begin{lemma}\label{lem:g-1_rescale}
	The Steklov--Poincar\'{e} operator $g^{-1}: H^{1/2}_*(\Gamma_h)\rightarrow  H^{-1/2}_*(\Gamma_h)$ has the following regularity, for $u \in H^{1/2}_*(\Gamma_h) $
	\begin{equation}\label{eq:property_g_rescale}
	\<g^{-1}u, u\>  \ge C_1 \|u\|^2_{ H^{1/2}_{\Gamma_h}} \quad \text{and}\quad  \|g^{-1}u\|_{H^{-1/2}(\Gamma_h)} \le C_2\|u\|_{H^{1/2}_{\Gamma_h}},
	\end{equation}
	where $C_1$ and $C_2$ are independent of the radius of $\Omega_h$.
\end{lemma}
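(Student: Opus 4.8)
The plan is to reduce the estimate to a fixed reference boundary of unit size, apply Theorem~\ref{thm:+ve_def} there (where its constants $C^g_1,C^g_2$ are fixed numbers), and transport the two inequalities back to $\Gamma_h$ using the energy invariance of Lemma~\ref{lem:g_invariant} together with the scaling identities for the Slobodeckij seminorm established just before \eqref{def:rescale_norm}. Concretely, set $R:=\tfrac12\diam(\Gamma_h)$ and write $\Gamma_h=R\Gamma_1$ with $\Omega_1=R^{-1}\Omega_h$ of unit size. For $u\in H^{1/2}_*(\Gamma_h)$ define the pullback $u_1(y):=u(Ry)$ on $\Gamma_1$; the mean-value computation in Lemma~\ref{lem:g_invariant} shows $u_1\in H^{1/2}_*(\Gamma_1)$, so the reference estimates apply to $u_1$ with size-independent constants.

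For the coercivity bound I would simply chain three facts. First, Lemma~\ref{lem:g_invariant} gives $\langle g^{-1}u,u\rangle_{\Gamma_h}=\langle g_1^{-1}u_1,u_1\rangle_{\Gamma_1}$. Second, Theorem~\ref{thm:+ve_def} on $\Gamma_1$ gives $\langle g_1^{-1}u_1,u_1\rangle_{\Gamma_1}\ge C^g_1\|u_1\|^2_{H^{1/2}(\Gamma_1)}$. Third, the norm identity $\|u_1\|_{H^{1/2}(\Gamma_1)}=\|u\|_{H^{1/2}_{\Gamma_h}}$ — which is exactly the rescaling calculation motivating the definition \eqref{def:rescale_norm} — converts the reference norm into the rescaled norm on $\Gamma_h$. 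Composing the three yields $\langle g^{-1}u,u\rangle\ge C^g_1\|u\|^2_{H^{1/2}_{\Gamma_h}}$, so one may take $C_1=C^g_1$, manifestly independent of $R$.

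For the boundedness bound the additional ingredient is the scaling of $g^{-1}$ as an operator. I would first record the scalings $V u(x)=R\,(V_1u_1)(x/R)$ and $K u(x)=(K_1u_1)(x/R)$, where $V,K$ denote the boundary integral operators on $\Gamma_h=\Gamma_R$ (so that $u$ plays the role of $u_R$ in Lemma~\ref{lem:g_invariant}). The single-layer identity is the delicate one, since the logarithmic kernel generates a term proportional to $\log R\,\langle u_1,1\rangle_{\Gamma_1}$ that vanishes precisely because $u_1$ has zero mean — this is where the restriction to $H^{-1/2}_*$ is indispensable. Since $g^{-1}=V^{-1}(-K+(1-\lambda)I)$, these two scalings combine to give the operator identity $(g^{-1}u)(x)=R^{-1}(g_1^{-1}u_1)(x/R)$. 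The compensating factors $R^{-1}$ (from the operator) and $R$ (from the duality pairing, which satisfies $\langle\cdot,\cdot\rangle_{\Gamma_h}=R\langle\cdot,\cdot\rangle_{\Gamma_1}$ on pullbacks) then cancel, so that measured in the rescaled dual norm $\|\cdot\|_{H^{-1/2}_{\Gamma_h}}$ (the dual of $\|\cdot\|_{H^{1/2}_{\Gamma_h}}$) one obtains $\|g^{-1}u\|_{H^{-1/2}_{\Gamma_h}}=\|g_1^{-1}u_1\|_{H^{-1/2}(\Gamma_1)}\le C^g_2\|u_1\|_{H^{1/2}(\Gamma_1)}=C^g_2\|u\|_{H^{1/2}_{\Gamma_h}}$.

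The only step requiring real care is this bookkeeping of powers of $R$ in the dual estimate, and the realisation that the clean, size-independent object is the rescaled dual norm rather than the unscaled $H^{-1/2}(\Gamma_h)$ norm. The inequality as stated, with $\|g^{-1}u\|_{H^{-1/2}(\Gamma_h)}$ on the left, then follows for $R\ge1$ from the elementary domination $\|\cdot\|_{H^{-1/2}(\Gamma_h)}\le\|\cdot\|_{H^{-1/2}_{\Gamma_h}}$, which holds because the rescaled $H^{1/2}$ norm carries the smaller $L^2$-weight $R^{-1}\le1$ in the denominator of the defining supremum. Apart from this, the argument is pure substitution and needs no analytic input beyond Theorem~\ref{thm:+ve_def}, Lemma~\ref{lem:g_invariant}, and the elementary seminorm scaling.
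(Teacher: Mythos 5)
Your proposal is correct and follows the same route as the paper: the paper's own proof of Lemma~\ref{lem:g-1_rescale} is a one-line appeal to Theorem~\ref{thm:+ve_def} and Lemma~\ref{lem:g_invariant}, which is exactly the rescale--apply--transport argument you carry out in detail. The one point worth noting is that your elaboration of the boundedness estimate supplies something those two cited results do not literally contain: Lemma~\ref{lem:g_invariant} asserts only invariance of the quadratic form $\<g^{-1}u,u\>$, which yields the coercivity bound immediately but not the dual-norm bound. Your operator scaling identity $(g^{-1}u)(x)=R^{-1}\,(g_1^{-1}u_1)(x/R)$ --- implicit in the appendix proof of Lemma~\ref{lem:g_invariant} through the density scaling $\phi_R=R^{-1}\phi_1(\cdot/R)$, and valid precisely because the $\log R$ term in the single-layer kernel is annihilated by zero-mean densities --- is the missing ingredient, and your bookkeeping is accurate. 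Your final observation, that the clean size-independent statement is in the rescaled dual norm $\|\cdot\|_{H^{-1/2}_{\Gamma_h}}$ and that the unscaled bound as stated then follows by the domination $\|\cdot\|_{H^{-1/2}(\Gamma_h)}\le\|\cdot\|_{H^{-1/2}_{\Gamma_h}}$ for $R\ge 1$, also quietly resolves a discrepancy in the paper itself: the in-text version of \eqref{eq:property_g_rescale} stated before \eqref{eq:def_complete_norm} uses the rescaled dual norm on the left, while the lemma uses the unscaled one.
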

\begin{proof}
	The result follows directly from Theorem \ref{thm:+ve_def} and Lemma \ref{lem:g_invariant}.
\end{proof}

\subsection{Boundary element approximation error}
We also need the following boundary element approximation error estimate to compare $g^{-1}$ and $g^{-1}_h$.

\begin{theorem}\label{thm:g-g_h}
	If $u \in H^{1/2}_*(\Gamma_h) $, then the approximation solution $g_h^{-1} u$ to \eqref{eq: def_g_h} exists and we have the following stability property
	\begin{equation}\label{eq: g_h_stability}
	\|g_h^{-1} u\|_{H^{-1/2}(\Gamma_h)} \le \frac{C_2}{C_1}\|u\|_{H^{1/2}_{\Gamma_h}}.
	\end{equation}
	Furthermore, if $g^{-1}u\in H^{1}(\Gamma_h)$, then
	\begin{equation}\label{eq: g-g_h}
	\|g^{-1}u - g_h^{-1} u\|_{H^{-1/2}(\Gamma_h)}\le Ch^{3/2}|g^{-1}u|_{H^{1}(\Gamma_h)},
	\end{equation}
	where $h$ is the size of each boundary element and $C$ is independent of the size of $\Gamma_h$.
\end{theorem}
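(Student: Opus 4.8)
The plan is to recognise the discrete problem \eqref{eq: def_g_h} as nothing more than the conforming Galerkin discretisation, in the piecewise-constant space $S^0_h \subset H^{-1/2}_*(\Gamma_h)$, of the single-layer equation whose exact solution is $w := g^{-1}u$ (up to the sign convention used in \eqref{eq: def_g_h}); that is, $\langle V w, \tau\rangle = \langle (K+(\lambda-1)I)u, \tau\rangle$ for all admissible $\tau$, with $g_h^{-1}u = w_h$ its Galerkin approximation. Once this is in place, both claims become standard consequences of the mapping properties of the single-layer operator $V$: Theorem \ref{thm:bdry_bdd} supplies boundedness of $V$ and $K$, while Theorem \ref{thm: ellip} supplies the $H^{-1/2}_*$-ellipticity of $V$. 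Thus existence, uniqueness and the stability bound \eqref{eq: g_h_stability} follow from Lax--Milgram, whereas the rate \eqref{eq: g-g_h} follows from C\'ea's lemma reduced to a negative-norm best-approximation estimate.

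For the stability statement I would apply Lax--Milgram on the finite-dimensional conforming subspace $S^0_h$: $V$ is bounded and coercive there, so $w_h = g_h^{-1}u$ exists and is unique. Testing \eqref{eq: def_g_h} with $\tau_h = w_h$ and using ellipticity gives $C^V\|w_h\|_{H^{-1/2}}^2 \le \langle V w_h, w_h\rangle = \langle (K+(\lambda-1)I)u, w_h\rangle \le \|(K+(\lambda-1)I)u\|_{H^{1/2}}\|w_h\|_{H^{-1/2}}$, so that boundedness of $K$ yields $\|w_h\|_{H^{-1/2}} \lesssim \|u\|_{H^{1/2}}$. To convert the ordinary-norm constant into the asserted ratio $C_2/C_1$ and to make it independent of the size of $\Gamma_h$, I would pass to the rescaled norms and invoke the continuity and coercivity constants $C_1,C_2$ of $g^{-1}$ from Lemma \ref{lem:g-1_rescale}; the Galerkin stability constant of a bounded, coercive problem is exactly the ratio of its continuity constant to its coercivity constant.

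For the error estimate I would subtract the discrete from the continuous equation to obtain the Galerkin orthogonality $\langle V(w-w_h), \tau_h\rangle = 0$ for all $\tau_h\in S^0_h$, whereupon C\'ea's lemma (boundedness and ellipticity of $V$) reduces the task to $\|w - w_h\|_{H^{-1/2}} \lesssim \inf_{\tau_h\in S^0_h}\|w-\tau_h\|_{H^{-1/2}}$. It then remains to bound the best approximation of $w = g^{-1}u$, which is assumed to lie in $H^1(\Gamma_h)$, by piecewise constants in the negative norm $H^{-1/2}$. Taking $\tau_h = \Pi_h w$ the $L^2$-projection and arguing by duality (Aubin--Nitsche), $\|w-\Pi_h w\|_{H^{-1/2}} = \sup_{\phi}\langle w-\Pi_h w,\, \phi - \Pi_h\phi\rangle/\|\phi\|_{H^{1/2}}$, I would combine the order-$h$ $L^2$-estimate for $w\in H^1$ with the order-$h^{1/2}$ $L^2$-estimate for the $H^{1/2}$ test function, producing the factor $h\cdot h^{1/2} = h^{3/2}$ and the seminorm $|w|_{H^1(\Gamma_h)}$; here the uniform quasi-uniformity of the induced boundary mesh and assumption {\bf (A3)} guarantee scale-independent interpolation constants.

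The two genuinely technical points are the negative-norm approximation estimate above and the claim that $C$ in \eqref{eq: g-g_h} is independent of the size of $\Gamma_h$. For the latter I would prove everything first on a reference boundary $\Gamma_1$ of unit diameter, where the boundedness, ellipticity and interpolation constants are fixed, and then transfer to $\Gamma_h = R\Gamma_1$ with $R = \tfrac12\diam(\Gamma_h)$ via the rescaling identities of \S\ref{sect:rescale}, using that the boundary mesh scales as $h = Rh_1$ and that $g^{-1}$ is rescale-invariant (Lemma \ref{lem:g_invariant}). Tracking the powers of $R$ arising from the $L^2$, $H^{1/2}$ and $H^1$ scalings and checking that they balance against $h^{3/2}$ is where the bookkeeping requires care, and is the main obstacle I anticipate.
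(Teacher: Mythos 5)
Your proposal is correct and takes essentially the same approach as the paper: existence and the stability bound \eqref{eq: g_h_stability} follow from Lax--Milgram on the conforming subspace $S^0_h(\Gamma_h)$, the error \eqref{eq: g-g_h} is reduced by C\'ea's lemma to a best-approximation problem in $H^{-1/2}(\Gamma_h)$, and the $O(h^{3/2})$ negative-norm estimate for piecewise constants together with the rescaling of \S\ref{sect:rescale} gives constants independent of ${\rm diam}(\Gamma_h)$. The only difference is that the paper simply cites this approximation estimate (Theorem 10.4 in \cite{Olaf:BEM}), whereas you re-derive it by the standard Aubin--Nitsche duality argument with the $L^2$-projection, which is precisely how the cited result is proved.
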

\begin{proof}
	Since $S^0_h(\Gamma_h)$ is a conforming trial space in $H^{-1/2}(\Gamma_h)$ and $g^{-1}$ is bounded and elliptic according to Lemma \ref{lem:g-1_rescale}, then by the Lax-Milgram Theorem $g^{-1}_h u$ exists and we have \eqref{eq: g_h_stability}. For \eqref{eq: g-g_h}, by Cea's Lemma we have
	\begin{equation*}
	\|g^{-1}u - g_h^{-1} u\|_{H^{-1/2}(\Gamma_h)}\le \frac{C_2}{C_1}\inf_{v_h\in H^{-1/2}(\Gamma_h)} \| g^{-1}u - v_h\|_{H^{-1/2}(\Gamma_h)}.
	\end{equation*}
	By Theorem 10.4 in \cite{Olaf:BEM}, we have
	\begin{equation*}
	\inf_{v_h\in H^{-1/2}(\Gamma_h)} \| g^{-1}u - v_h\|_{H^{-1/2}(\Gamma_h)} \le c h^{3/2}|g^{-1}u|_{H^{1}(\Gamma_h)}.
	\end{equation*}
\end{proof}

\section{Main results}\label{sec:main_results}

\subsection{Regularity of $u^\a$}
The approximation error estimates in later sections requires the
decay of the elastic fields away from the defect core which follows from
a natural stability assumption:

\bf{(A1)} \normalfont The atomistic solution is strongly stable, that is, there
exists $C_0>0$,
\begin{equation}\label{assum:stab_hom}
\<\del^2 \E^{\a} (u^\a) \varphi, \varphi  \> \ge C_0 \|\D \varphi\|^2_{L^2},
\quad \forall \varphi \in  \dot{\mathcal{U}}^{1,2},
\end{equation}
where $u^\a$ is a solution to \eqref{eq:y_a}.

\begin{corollary}\label{thm:decay}
	Suppose that {\bf (A1)} is satisfied, then there exists a constant $C>0$
	such that, for $1\le j \le 3$,
	\begin{equation*}
	|D^ju^\a(\ell)|\le
	C| \ell |^{-1-j} \quad \text{and} \quad |\D^j \tilde{u}^\a (x)| \le C|x|^{-1-j}.
	\end{equation*}
	
\end{corollary}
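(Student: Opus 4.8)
The plan is to read off the decay from the equilibrium equation for $u^\a$, the coercivity supplied by \textbf{(A1)}, and the pointwise decay of the lattice Green's function of the linearised operator. First I would record the Euler--Lagrange equation
\[
\<\del \E^\a(u^\a), \varphi\> = \<\del f(u^\a), \varphi\>
\qquad \forall \varphi \in \dot{\mathcal{U}}^{1,2},
\]
and note two structural facts about the force. Since $\partial_{u(\ell)} f(u) = 0$ for $|\ell|\ge R_f$, the residual is supported in the defect core $\mathcal{B}_{R_f}$; and since $f(u+c)=f(u)$, the total force vanishes, $\sum_\ell \partial_{u(\ell)} f(u^\a)=0$. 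The elastic field therefore responds to a compactly supported force with zero mean, which is what ultimately produces the faster decay $|Du^\a|\lesssim|\ell|^{-2}$ rather than the logarithmic behaviour associated with a net point load.

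Next I would linearise. Because $u^\a\in\dot{\mathcal{U}}^{1,2}$ we have $Du^\a\in\ell^2$, hence $Du^\a(\ell)\to0$ as $|\ell|\to\infty$, so the coefficients of the linearised operator $\mathcal{H}:=\del^2\E^\a(u^\a)$ converge to those of the homogeneous operator $\mathcal{H}_0:=\del^2\E^\a(\mathbf{0})$, whose continuum symbol is $\mu|\xi|^2$ since $\D^2 W(\mathbf{0})=\mu I$. Assumption \textbf{(A1)} makes $\mathcal{H}$ coercive on $\dot{\mathcal{U}}^{1,2}$, hence boundedly invertible. The central analytic tool is the pointwise decay of the lattice Green's function $\mathcal{G}$ of $\mathcal{H}_0$, namely $|D^j\mathcal{G}(\ell)|\lesssim|\ell|^{-j}$ for $j\ge1$, obtained from the Fourier representation of $\mathcal{H}_0^{-1}$ and a Combes--Thomas (contour-shift) estimate exploiting analyticity of the symbol away from $\xi=0$.

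I would then combine these ingredients. Writing $u^\a=\mathcal{H}_0^{-1}g$ with $g$ the full residual, the contribution of the genuine force is treated by convolving $D^j\mathcal{G}$ against a compactly supported, mean-zero field: a discrete summation-by-parts (Taylor expansion of $D^j\mathcal{G}$ about the core) uses $\sum g=0$ to gain one extra power, giving $|D^j u^\a(\ell)|\lesssim|\ell|^{-1-j}$ in the linear model. The variable-coefficient correction $(\mathcal{H}_0-\mathcal{H})u^\a$ is quadratically small in $Du^\a$ and itself decaying, so it is absorbed by a bootstrap: one proves the estimate on dyadic annuli $|\ell|\sim 2^k$ and feeds the improved decay back into the right-hand side, each iteration raising the exponent until it saturates at $-1-j$ (using $1\le j\le3$ together with the regularity $V\in C^r$, $r\ge5$, to justify the required differentiations). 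The smooth-interpolant bound $|\D^j\tilde u^\a(x)|\lesssim|x|^{-1-j}$ then follows directly from the discrete estimate via Lemma \ref{lem:smooth_int}(b), since for $|x|$ large the neighbouring sites entering \eqref{eq: int_ineq_proof} all have comparable magnitude $|x|$.

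The main obstacle is the decay of the \emph{variable-coefficient} inverse $\mathcal{H}^{-1}$, equivalently the off-diagonal decay of its kernel $\mathcal{H}^{-1}(\ell,\ell')$: the constant-coefficient Green's function decay is classical, but transferring it to $\mathcal{H}$ requires controlling the perturbation $\mathcal{H}-\mathcal{H}_0$ uniformly. This is precisely where \textbf{(A1)} is indispensable, as the coercivity supplies the spectral gap that drives the Combes--Thomas argument; the remaining effort is the bootstrap that upgrades the $\ell^2$-based information to the sharp pointwise rate.
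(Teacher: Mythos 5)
Your sketch is correct and is essentially the argument that the paper itself relies on: the paper offers no proof of its own, simply citing Theorem 2.3 of \cite{EhrOrtSha:2013}, and that reference obtains the decay exactly along the route you describe --- equilibrium equation with a compactly supported, zero-net-force residual, algebraic decay of the lattice Green's function $\mathcal{G}$ of the linearised homogeneous operator $\mathcal{H}_0$, a summation-by-parts (Taylor expansion of $D^j\mathcal{G}$) exploiting $\sum g = 0$ to gain the extra power, a dyadic bootstrap to absorb the quadratic nonlinear remainder, and the transfer to $\tilu^\a$ via Lemma \ref{lem:smooth_int}(b). One correction of terminology rather than of substance: since the symbol of $\mathcal{H}_0$ vanishes at $\xi = 0$, neither $\mathcal{H}_0$ nor $\mathcal{H}$ has a spectral gap, so the bounds $|D^j \mathcal{G}(\ell)| \lesssim |\ell|^{-j}$ are genuinely algebraic and follow from Fourier analysis of the $|\xi|^{-2}$ singularity of $\hat{\mathcal{G}}$ at the origin (a Combes--Thomas argument presupposes a gap and would yield exponential decay, which is false here); the actual role of \textbf{(A1)} is to make the linearised operator invertible on $\dot{\mathcal{U}}^{1,2}$ so that the bootstrap can be initialised from $Du^\a \in \ell^2$, which is how your third paragraph in fact uses it.
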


\begin{proof}
	See Theorem 2.3 in \cite{EhrOrtSha:2013}.
\end{proof}

\subsection{Stability} \label{sec:stability}
In \cite{2013-stab.ac} it is proven that there is a ``universal'' instability in
2D interfaces for QNL-type a/c couplings. It is impossible to show
that $\ddel \E^{\rm g23}(u^\a)$ is a positive definite operator for general cases, even
with the assumption \eqref{assum:stab_hom}. In fact, this potential instability is
universal to a wide class of generalized geometric reconstruction
methods. Nevertheless, it is rarely observed in practice. To circumvent this
difficulty, we make the following standing assumption:

\bf{(A2)} \normalfont The {\em homogeneous lattice} is strongly stable under the
G23 approximation, that is, there exists $C^{\rm ac}_0 > 0$ which is independent of $K$
such that, for $K$ sufficiently large,
\begin{equation}\label{assum:stab_hom_g23}
\<\del^2 \E_h^{\rm ac}(0)\varphi_h, \varphi_h  \> \ge C^{\rm ac}_0\|\D \varphi_h\|^2_{L^2}, \quad \forall \varphi_h \in  \mathcal{U}_h.
\end{equation}

Because \eqref{assum:stab_hom_g23} does not depend on the solution it can be
tested numerically. But a precise understanding under which conditions
\eqref{assum:stab_hom_g23} is satisfied is still missing. In \cite{2013-stab.ac}
a method of stabilizing 2D QNL-type schemes with flat interfaces is formulated,
which could replace this assumption, but we are not yet able to extend this method to interfaces with corners, such as the configurations
discussed in this paper. From these two assumptions, we can deduce the following stability result when the BEM formulation is added.

\begin{lemma}\label{thm:stab_g23_0}
	For any $\varphi_h\in \mathcal{U}_h^*$, we have
	\begin{equation}
	\<\del^2 \E^{\rm tot}_h(0)\varphi_h, \varphi_h  \> \ge C^{\rm tot}_0\| \varphi_h\|^2_{E},
	\end{equation}
	where $\| \cdot \|^2_E$ is the norm defined in \eqref{eq:def_complete_norm} and $C_0^{\rm tot}$ is independent of the size of $\Omega_h$.
\end{lemma}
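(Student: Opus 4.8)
The plan is to decompose the total Hessian $\del^2\E_h^{\rm tot}(0)$ into its interior (a/c coupling) part and its boundary (BEM) part, bound each separately using the standing assumption \textbf{(A2)} and the positive definiteness of the Steklov--Poincar\'e operator, and then combine the two lower bounds into a single bound in the composite norm $\|\cdot\|_E$. Differentiating \eqref{eq:Etot_h} twice at $u=0$, the bilinear form splits as
\begin{equation*}
\<\del^2 \E^{\rm tot}_h(0)\varphi_h, \varphi_h\> = \<\del^2\E^{\rm ac}_h(0)\varphi_h,\varphi_h\> + \mu\int_{\Gamma_h} (\gamma^{\rm int}_0\varphi_h)\, g_h^{-1}(\gamma^{\rm int}_0\varphi_h),
\end{equation*}
since the boundary term is already quadratic in $u$. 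First I would treat the interior term: by \textbf{(A2)} it is bounded below by $C^{\rm ac}_0\|\D\varphi_h\|^2_{L^2(\R^2)}$, and since $\varphi_h \in \mathcal{U}_h$ has derivative supported in $\Omega_h$, this is exactly $C^{\rm ac}_0\|\D\varphi_h\|^2_{L^2(\Omega_h)}$, which is the first term of $\|\varphi_h\|_E^2$ in \eqref{eq:def_complete_norm}.

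Next I would treat the boundary term. For $\varphi_h \in \mathcal{U}_h^*$ the trace $\gamma^{\rm int}_0\varphi_h$ lies in $H^{1/2}_*(\Gamma_h)$, so the discrete Steklov--Poincar\'e operator $g_h^{-1}$ is applied to an admissible argument. The key point is that $g_h^{-1}$ inherits a coercivity lower bound analogous to that of $g^{-1}$ in Lemma~\ref{lem:g-1_rescale}; indeed $g_h^{-1}$ is the Galerkin approximation defined by \eqref{eq: def_g_h}, and the ellipticity of $V$ (Theorem~\ref{thm: ellip}) together with the representation $g^{-1} = D + (K'-(1-\lambda)I)V^{-1}(K-(1-\lambda)I)$ used in the proof of Theorem~\ref{thm:+ve_def} should carry over to the discrete setting to yield $\<g_h^{-1}u,u\> \ge \<Du,u\> \ge C^D\|u\|^2_{H^{1/2}(\Gamma_h)}$. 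Combined with the rescale-invariance of Lemma~\ref{lem:g_invariant}, this gives a lower bound in the rescaled norm, $\<g_h^{-1}(\gamma^{\rm int}_0\varphi_h),\gamma^{\rm int}_0\varphi_h\> \ge C_1\|\gamma^{\rm int}_0\varphi_h\|^2_{H^{1/2}_{\Gamma_h}}$, with $C_1$ independent of the size of $\Omega_h$. This is precisely the second term of $\|\varphi_h\|_E^2$.

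Adding the two bounds and setting $C_0^{\rm tot} := \min\{C^{\rm ac}_0, \mu C_1\}$ yields the claim, with $C_0^{\rm tot}$ independent of the size of $\Omega_h$ because both constituent constants are (by \textbf{(A2)} and by the rescale-invariance in Lemma~\ref{lem:g-1_rescale}). The main obstacle I anticipate is establishing the discrete coercivity of $g_h^{-1}$ on the finite-dimensional trial space $S^0_h(\Gamma_h)$: the clean lower bound $\<g^{-1}u,u\>\ge\<Du,u\>$ rests on the continuous identity for $g^{-1}$, and one must verify that the Galerkin discretisation in \eqref{eq: def_g_h} does not destroy this positivity --- i.e.\ that the discrete solution $g_h^{-1}u = \bar v_h$ still satisfies an energy inequality controlling $\|\gamma^{\rm int}_0\varphi_h\|^2_{H^{1/2}_{\Gamma_h}}$ uniformly in $h$ and in the domain size. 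This is where the stability estimate \eqref{eq: g_h_stability} of Theorem~\ref{thm:g-g_h} and the ellipticity of $V$ on $S^0_h(\Gamma_h)\subset H^{-1/2}_*(\Gamma_h)$ must be invoked carefully to transfer the continuous-level coercivity to the discrete level.
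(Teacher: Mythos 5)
Your overall structure is the same as the paper's proof: split the Hessian at $0$ into the a/c part plus $\mu$ times the boundary quadratic form, bound the first via \textbf{(A2)}, the second via Steklov--Poincar\'e coercivity, and take the minimum of the two constants; your treatment of the interior term is exactly the paper's. The difference, and it matters, is the boundary term. The paper's proof writes it as $\mu\int_{\Gamma_h}\varphi_h\,g^{-1}\varphi_h$ and invokes \eqref{eq:property_g_rescale}, i.e.\ it works with the \emph{exact} Steklov--Poincar\'e operator, even though $\E^{\rm tot}_h$ is defined in \eqref{eq:Etot_h} through the \emph{discrete} operator $g_h^{-1}$; the discrete coercivity question therefore never appears in the paper at all. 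You, being faithful to \eqref{eq:Etot_h}, must bound $\<g_h^{-1}(\gamma^{\rm int}_0\varphi_h),\gamma^{\rm int}_0\varphi_h\>$ from below, and you correctly single this out as the crux.

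However, the mechanism you propose for that step does not work. The identity $g^{-1} = D + (K'-(1-\lambda)I)V^{-1}(K-(1-\lambda)I)$ is an identity for the continuous operator only; the operator defined by \eqref{eq: def_g_h} is a one-equation (Johnson--N\'ed\'elec type) discretisation, in which $g_h^{-1}u$ is (up to the sign conventions of \eqref{eq: def_g_h}) the $V$-Galerkin projection of $g^{-1}u$ onto $S^0_h(\Gamma_h)$, and it admits no analogous symmetric representation; hence $\<g_h^{-1}u,u\>\ge\<Du,u\>$ cannot be inferred. The natural fallback, writing $\<g_h^{-1}u,u\> = \<g^{-1}u,u\> + \<(g_h^{-1}-g^{-1})u,u\>$, also fails to close: C\'ea's lemma, as used in Theorem \ref{thm:g-g_h}, bounds the perturbation by $\inf_{\tau_h}\|g^{-1}u-\tau_h\|_{H^{-1/2}(\Gamma_h)}\,\|u\|_{H^{1/2}_{\Gamma_h}}$, which for a generic trace $u=\gamma^{\rm int}_0\varphi_h$ is merely $\lesssim\|u\|^2_{H^{1/2}_{\Gamma_h}}$ with no small factor (the rate in \eqref{eq: g-g_h} requires $g^{-1}u\in H^{1}(\Gamma_h)$, which an arbitrary finite element trace does not supply), so positivity of the discrete form does not follow; stability of this non-symmetric coupling is in fact a known and genuinely delicate problem. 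So the difficulty you flag is a real gap, and it is one the paper's own proof does not fill either, but silently bypasses by substituting $g^{-1}$ for $g_h^{-1}$. A clean repair is to define $g_h^{-1}$ instead by the symmetric discretisation $D + (K'-(1-\lambda)I)V_h^{-1}(K-(1-\lambda)I)$, where $V_h^{-1}$ is the Galerkin inverse of $V$ on $S^0_h(\Gamma_h)$: then $\<V_h^{-1}f,f\>\ge 0$ by ellipticity of $V$ on the discrete space, your inequality $\<g_h^{-1}u,u\>\ge\<Du,u\>\ge C^D\|u\|^2_{H^{1/2}(\Gamma_h)}$ \emph{does} hold by the argument of Theorem \ref{thm:+ve_def}, and Lemma \ref{lem:g_invariant} then yields the domain-size independence exactly as you intend.
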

\begin{proof}
	This is an immediate consequence of the property \eqref{eq:property_g_rescale} of $g^{-1}$:
	\begin{align*}
	\<\del^2 \E^{\rm tot}_h(0)\varphi_h, \varphi_h  \>  &= \<\del^2 \E^{\rm ac}_h(0)\varphi_h, \varphi_h  \> + \mu \int_{\Gamma_h}\varphi_hg^{-1} \varphi_h\\
	&\ge C^{\rm ac}\|\D \varphi_h\|^2_{L^2(\Omega_h)} + C_1 \|\varphi_h\|^2_{ H^{1/2}_{\Gamma_h}}\\
	&\ge \min \{C^{\rm ac}, C_1\}\|\varphi_h\|^2_E. \qedhere
	\end{align*}
\end{proof}

Then we have the following stability estimate.

\begin{theorem}\label{thm:stab}
	Under assumptions \textbf{(A1)} and \textbf{(A2)} there exists $\gamma>0$ such that, when the atomistic region radius $K$ is sufficiently large,
	\begin{equation}\label{eq:stab}
	\langle \delta \mathcal{G}_h(\Pi_h u^\a)\varphi_h,\varphi_h\rangle \ge \gamma \| \varphi_h\|^2_{E} \quad
	\text{ for all }\varphi_h \in \mathcal{U}^*_h.
	\end{equation}
\end{theorem}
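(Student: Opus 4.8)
The plan is to exploit the fact that the only non-quadratic ingredients of $\E^{\rm tot}_h$ live in the atomistic and interface regions, so that the BEM far-field contribution separates off cleanly and the problem reduces to a stability transfer for the interior G23--FEM operator. Since the boundary term $\tfrac{\mu}{2}\int_{\Gamma_h}(\gamma^{\rm int}_0 u)\,g_h^{-1}(\gamma^{\rm int}_0 u)$ in \eqref{eq:Etot_h} is quadratic in $u$, its second variation is independent of the base point, and therefore
\[
\langle\delta\mathcal{G}_h(\Pi_h u^\a)\varphi_h,\varphi_h\rangle = \langle[\del^2\E^{\rm ac}_h(\Pi_h u^\a) - \del^2 f(\Pi_h u^\a)]\varphi_h,\varphi_h\rangle + \mu\int_{\Gamma_h}(\gamma^{\rm int}_0\varphi_h)\,g_h^{-1}(\gamma^{\rm int}_0\varphi_h).
\]
The boundary term is handled directly: by the coercivity of the Steklov--Poincar\'{e} operator (Theorem~\ref{thm:+ve_def} and Lemma~\ref{lem:g-1_rescale}, transferred to $g_h^{-1}$ through the Galerkin stability of Theorem~\ref{thm:g-g_h}) it is bounded below by $\mu C_1\|\varphi_h\|^2_{H^{1/2}_{\Gamma_h}}\ge0$, uniformly in the size of $\Omega_h$. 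It then remains to show that the interior operator $\del^2\E^{\rm ac}_h(\Pi_h u^\a) - \del^2 f(\Pi_h u^\a)$ is coercive with respect to $\|\D\cdot\|^2_{L^2(\Omega_h)}$; combining the two bounds with the definition \eqref{eq:def_complete_norm} of $\|\cdot\|_E$ yields \eqref{eq:stab} with $\gamma=\min\{\gamma_{\rm int},\mu C_1\}$.

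For the interior part I would follow the G23 stability transfer of \cite{2016-qcp2}, now combining \textbf{(A1)} and \textbf{(A2)}. The essential difficulty is that $\del^2\E^{\rm ac}_h(\Pi_h u^\a)$ deviates from its homogeneous value $\del^2\E^{\rm ac}_h(0)$ by an amount governed by the strain $\D\tilde{u}^\a$, which is only $\mathcal{O}(1)$ near the defect core and hence \emph{cannot} be absorbed as a small perturbation of the stable homogeneous operator. I would therefore localize with a fixed ($K$- and $N$-independent) partition of unity $\eta^2+\zeta^2=1$, with $\eta\equiv1$ on $B_{r_0}$ and $\supp\eta\subset B_{2r_0}$, choosing $r_0$ large but fixed. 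On the core piece $\eta\varphi_h$ one uses that, for $K>2r_0$, the ball $B_{2r_0}$ lies strictly inside the atomistic region, where $\E^{\rm ac}_h$ reduces to $\E^\a$ and $\Pi_h u^\a=u^\a$ (the FEM and atomistic meshes coincide there); extending $\eta\varphi_h$ by zero and invoking \textbf{(A1)} gives $\langle\del^2\E^\a(u^\a)(\eta\varphi_h),\eta\varphi_h\rangle\ge C_0\|\D(\eta\varphi_h)\|^2_{L^2}$, the localized contribution of $\del^2 f$ (supported in $B_{R_f}\subset B_{r_0}$) being part of this near-core block. On the far piece $\zeta\varphi_h$, the decay estimate of Corollary~\ref{thm:decay}, $|\D\tilde{u}^\a|\le Cr_0^{-2}$ on $\supp\zeta$, together with the Lipschitz bounds $\nabla^3 V,\nabla^3 W\in L^\infty$, shows that $\del^2\E^{\rm ac}_h(\Pi_h u^\a)$ differs from $\del^2\E^{\rm ac}_h(0)$ by at most $Cr_0^{-2}\|\D(\zeta\varphi_h)\|^2_{L^2}$ there, so that the homogeneous coercivity \textbf{(A2)}, i.e. \eqref{assum:stab_hom_g23}, furnishes a positive lower bound for this piece once $r_0$ is fixed large enough.

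The main obstacle is stitching the two regimes, i.e. controlling the localization (commutator) error $\langle\mathsf{H}\varphi_h,\varphi_h\rangle - \langle\mathsf{H}(\eta\varphi_h),\eta\varphi_h\rangle - \langle\mathsf{H}(\zeta\varphi_h),\zeta\varphi_h\rangle$ with $\mathsf{H}=\del^2\E^{\rm ac}_h(\Pi_h u^\a)$. For the Cauchy--Born/FEM part this is the standard IMS term $\int a(|\D\eta|^2+|\D\zeta|^2)|\varphi_h|^2$, and for the finite-difference (atomistic and interface) part it is its discrete analogue involving differences of $\eta,\zeta$ over the finite interaction range; both are supported on the fixed annulus $B_{2r_0}\setminus B_{r_0}$ and, after subtracting constants and applying Poincar\'{e} on the annulus, are bounded by $C\|\D\varphi_h\|^2_{L^2(B_{2r_0}\setminus B_{r_0})}$. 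Since this error is $\mathcal{O}(1)$ rather than small, it cannot simply be absorbed, and I expect to resolve it exactly as in \cite{2016-qcp2}, by balancing the gains from \textbf{(A1)} and \textbf{(A2)} across the annulus -- equivalently, by routing the localization through a compactness/contradiction argument that rules out a minimizing sequence concentrating either at the core (contradicting \textbf{(A1)}) or escaping into the homogeneous far field (contradicting \textbf{(A2)}). The remaining ingredients -- that $\Pi_h u^\a$ is a sufficiently accurate interpolant, and that $\zeta\varphi_h$ retains its trace on $\Gamma_h$, where $\zeta\equiv1$, so that the boundary coercivity above is unaffected -- are routine.
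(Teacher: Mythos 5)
Your proposal takes essentially the same route as the paper: the paper likewise isolates the (base-point-independent) BEM contribution and bounds it below via the coercivity of the Steklov--Poincar\'{e} operator (Lemma \ref{thm:stab_g23_0}), and then transfers stability of the interior G23 Hessian from the homogeneous state to $\Pi_h u^\a$ using \textbf{(A1)}, \textbf{(A2)} and the decay of $\tilde{u}^\a$, citing this transfer as a ``straightforward adaptation'' of \cite[Lemma 4.9]{2014-bqce}. Your partition-of-unity sketch --- core block via \textbf{(A1)}, far field via \textbf{(A2)}, with the $O(1)$ localization error deferred to the balancing/compactness argument of the cited literature --- is precisely the content of that lemma, so the two proofs coincide in substance (yours is, if anything, more explicit than the paper's two-line proof).
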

\begin{proof}
	After employing Lemma~\ref{thm:stab_g23_0} this is a straightforward
	adaptation of the proof of \cite[Lemma 4.9]{2014-bqce}.
\end{proof}

\subsection{Main results}
Our two main results are a consistency error estimate for the A/C+BEM coupling
scheme and the resulting error estimate.

\begin{theorem}[Consistency]
	\label{thm:cons_main}If $u^\a $ is a solution to \eqref{eq:y_a}, then for all $v_h \in \mathcal{U}_h^*$
	\begin{equation}\label{eq:main_result_cons}
	\begin{aligned}
	&\hspace{-1.5cm} \<\del \E_h ^{\rm tot }(\Pi_h u^\a), v_h\> \\
	\lesssim &\Big(\|\D ^2\tilu ^\a \|_{L^2(\Omega^\i) } +\|\D ^3\tilu ^\a \|_{L^2(\R ^2 \setminus \Omega ^\a )} +\|\D ^2\tilu ^\a \|^2_{L^4(\R ^2 \setminus \Omega ^\a )} \\
	&\quad + \|h\D ^2\tilu ^\a \|_{L^2(\Omega^\c _h)} + \|h^{3/2}\D ^2\tilu ^\a \|_{L^2(\Gamma_h)} + N^{-3}\Big) \|v_h\|_{E}.
	\end{aligned}
	\end{equation}
\end{theorem}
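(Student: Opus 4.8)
The plan is to estimate the residual $\<\del \E_h^{\rm tot}(\Pi_h u^\a), v_h\>$ by comparing it, term by term, against the exact atomistic equilibrium $\<\del\E^\a(u^\a) - \del f(u^\a), \cdot\> = 0$, which holds because $u^\a$ solves \eqref{eq:y_a}. The energy $\E_h^{\rm tot}$ splits into the G23 part $\E_h^{\rm ac}$ and the boundary term $\tfrac{\mu}{2}\int_{\Gamma_h}(\gamma_0^{\rm int}u)\,g_h^{-1}(\gamma_0^{\rm int}u)$, so I would write the residual as a sum of three groups of contributions and bound each separately: (i) the interface/continuum modelling error of the GR-AC scheme in $\Omega_h$, (ii) the finite element truncation of the Cauchy--Born integral, and (iii) the boundary integral terms arising from replacing the exact exterior energy by its BEM discretisation and from truncating the far field at radius $N$.

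\emph{First} I would treat group (i). Since $\E_h^{\rm ac}$ is patch-test consistent (it satisfies \eqref{econs} and \eqref{fcons} by the G23 construction), the leading-order modelling error vanishes and the remaining interface error is controlled by the smoothness of the solution. Using the smooth interpolant $\tilu^\a$ from Lemma~\ref{lem:smooth_int} together with the norm-equivalence \eqref{eq: int_ineq_proof}, the interface reconstruction error contributes the term $\|\D^2\tilu^\a\|_{L^2(\Omega^\i)}$, while the consistency error of the Cauchy--Born approximation of the atomistic model away from the atomistic region yields the bulk terms $\|\D^3\tilu^\a\|_{L^2(\R^2\setminus\Omega^\a)}$ and the quadratic (second-order Taylor remainder) term $\|\D^2\tilu^\a\|_{L^2(\R^2\setminus\Omega^\a)}^2$ in the $L^4$ norm; these are standard GR-AC consistency estimates, essentially as in \cite{2016-qcp2,PRE-ac.2dcorners}, and I would cite or adapt those computations rather than redo them. \emph{Second}, for group (ii) I would insert the nodal interpolant and apply the uniform shape-regularity \eqref{eq:unif-shape-reg} together with the P1 interpolation error estimate on $\T_h^c$; this produces the weighted term $\|h\D^2\tilu^\a\|_{L^2(\Omega_h^\c)}$.

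\emph{Third} comes the genuinely new piece, group (iii), and I expect this to be the main obstacle. Here I must compare the discrete Steklov--Poincar\'e contribution $\mu\int_{\Gamma_h}(\gamma_0^{\rm int}\Pi_h u^\a)\,g_h^{-1}(\gamma_0^{\rm int}v_h)$ against the true far-field energy of $u^\a$ on $\R^2\setminus\Omega_h$. I would decompose this into two errors: the BEM discretisation error $g^{-1}-g_h^{-1}$, controlled via Theorem~\ref{thm:g-g_h} to give the term $\|h^{3/2}\D^2\tilu^\a\|_{L^2(\Gamma_h)}$ after bounding $|g^{-1}(\gamma_0^{\rm int}u^\a)|_{H^1(\Gamma_h)}$ by the trace of $\D^2\tilu^\a$; and the far-field \emph{truncation} error from replacing the nonlinear/Cauchy--Born exterior energy by the linearised Laplace energy, which by the decay estimate $|\D^j\tilu^\a|\lesssim |x|^{-1-j}$ of Corollary~\ref{thm:decay} integrates to the $N^{-3}$ remainder. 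The delicate points are matching the rescaled fractional norms of \S\ref{sect:rescale} so that all constants stay independent of $N$ (using Lemma~\ref{lem:g-1_rescale}), and controlling the trace of $g^{-1}(\gamma_0^{\rm int}u^\a)$ in $H^1(\Gamma_h)$ by interior $L^2$ norms of $\D^2\tilu^\a$ on $\Gamma_h$ via the Trace Theorem~\ref{thm: trace} together with assumption \textbf{(A3)} on the alignment of $\Gamma_h$ with the atomistic mesh. Collecting the six contributions and factoring out $\|v_h\|_E$ — which dominates both $\|\D v_h\|_{L^2(\Omega_h)}$ and $\|\gamma_0^{\rm int}v_h\|_{H^{1/2}_{\Gamma_h}}$ by definition \eqref{eq:def_complete_norm} — yields \eqref{eq:main_result_cons}.
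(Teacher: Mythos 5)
Your high-level decomposition (GR-AC modelling error, FEM interpolation error, BEM/far-field error) matches the paper's strategy, and your treatment of groups (i) and (ii) is essentially what the paper does. However, group (iii) — which you correctly flag as the main obstacle — contains a step that would fail as stated, and your setup is missing a construction without which the whole comparison is not well defined. You propose to apply Theorem~\ref{thm:g-g_h} directly to $u^\a$, bounding $|g^{-1}(\gamma_0^{\rm int}u^\a)|_{H^1(\Gamma_h)}$ by traces of $\D^2\tilu^\a$ via the Trace Theorem and \textbf{(A3)}. This cannot work: $g^{-1}$ is the exterior Dirichlet-to-Neumann map, so $g^{-1}(\gamma_0^{\rm int}\tilu^\a)$ is the conormal derivative of the exterior \emph{harmonic extension} of the trace of $\tilu^\a$, not of $\tilu^\a$ itself; since $\tilu^\a$ is only approximately harmonic these differ, and moreover $\Gamma_h$ is polygonal, so the harmonic extension of a general trace has corner singularities and its conormal derivative need not lie in $H^1(\Gamma_h)$ with any controlled norm. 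The paper circumvents exactly this by introducing an auxiliary function $w$ (\S\ref{sec:def_w}): the exterior harmonic function with data $\tilu^\a$ on the \emph{smooth} circle $\pp \mathcal{B}_R$, $R = \tfrac23 N$, which is harmonic in a neighbourhood of $\Gamma_h$, so that $g^{-1}w\in H^1(\Gamma_h)$ with $|\D^2 w|\lesssim |x|^{-3}$ by a separation-of-variables argument (Lemma~\ref{th:decay_D2w}). The price is the comparison terms $\<g^{-1}(w-\tilu^\a_h),v_h\>$ and $\<g_h^{-1}(\tilu^\a_h-w),v_h\>$, each $O(N^{-3})$ by Lemma~\ref{lem:w-ua}; this is how the $N^{-3}$ term actually arises. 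The paper's remark after Theorem~\ref{thm:err_2} states explicitly that the direct linearisation/decay integration you propose for the $N^{-3}$ term cannot be performed. Note also a misattribution: in the paper the term $\|h^{3/2}\D^2\tilu^\a\|_{L^2(\Gamma_h)}$ does \emph{not} come from the $g^{-1}-g_h^{-1}$ discretisation error (that contributes only $h^{3/2}N^{-5/2}$, via $w$); it comes from the boundary best-approximation error $\<g_h^{-1}(\Pi_h u^\a - \tilu^\a_h),v_h\>$, estimated through the interpolation inequality between $H^0_{\Gamma_h}$ and $H^1_{\Gamma_h}$ in the rescaled norms.

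The second gap is structural: to compare $\<\del \E_h^{\rm tot}(\Pi_h u^\a),v_h\>$ "term by term against the exact atomistic equilibrium," you must pair $\del\E^\a(u^\a)$ with an admissible test function in $\dot{\mathcal{U}}^{1,2}$, but $v_h\in\mathcal{U}_h^*$ carries information only on $\Omega_h\cup\Gamma_h$ and is piecewise linear on a coarse mesh, so it is not such a function. The paper constructs a transfer operator $v=\Pi_h^* v_h$ (\S\ref{sec: test_fun_v}): first the exterior harmonic extension $v_h^E$ of $v_h$, then a Carstensen-type quasi-interpolant that agrees with $v_h$ on $\Omega^\a\cup\Omega^\i$ and on $\Gamma_h$ (this is where \textbf{(A3)} is actually used) and satisfies $\|\D v\|_{L^2(\R^2)}\lesssim\|v_h\|_E$. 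This construction is not optional bookkeeping: it generates the additional error term $\int_{\Omega_h^\complement}\D\tilu^\a\cdot(\D v_h^E-\D v)$, which is a further $O(N^{-3})$ contribution, and without it the identity $\int_{\Omega_h^\complement}\D\tilu^\a\cdot\D v_h^E = \<g^{-1}\tilu^\a_h,v_h\>_{\Gamma_h}$ that links the boundary pairing to the exterior energy is unavailable. Your proposal identifies the correct final error terms, but the two devices that make the argument rigorous — the auxiliary harmonic function $w$ and the test-function interpolant $\Pi_h^*$ — are absent, and the shortcut you suggest in their place does not go through.
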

\begin{proof}
	See Section \ref{sect:proof_thm_consit}.
\end{proof}

Combining Theorem \ref{thm:cons_main} with the stability result Theorem \ref{thm:stab}, we obtain the following error estimate.

\begin{theorem}\label{thm:err_2}
If $u^\a $ is a solution to \eqref{eq:y_a} and Assumptions (A1) and (A2) are satisfied then, for $K$ sufficiently large, there exists a solution $u_h\in \mathcal{U}_h^*$ to \eqref{eq: solution_uh} satisfying
\begin{equation}
\label{eq: final_err}
\begin{aligned}
\| \tilu ^\a - u_h\|_{E} &\lesssim \|\D ^2\tilu ^\a \|_{L^2(\Omega^\i) } +\|\D ^3\tilu ^\a \|_{L^2(\R ^2 \setminus \Omega ^\a )} +\|\D ^2\tilu ^\a \|^2_{L^4(\R ^2 \setminus \Omega ^\a )} \\
	& \quad + \|h\D ^2\tilu ^\a \|_{L^2(\Omega^\c _h)}
 		+ \| h^{3/2} \D ^2\tilu ^\a \|_{L^2(\Gamma_h)} + N^{-3}.
\end{aligned}
\end{equation}
\end{theorem}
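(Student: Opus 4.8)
The plan is to combine the consistency estimate of Theorem~\ref{thm:cons_main} with the stability estimate of Theorem~\ref{thm:stab} via a quantitative inverse function theorem, which is the standard mechanism for converting consistency-plus-stability into an a~priori error bound in a/c coupling analysis (cf.\ the framework in \cite{acta,2014-bqce}). First I would set up the residual: since $u^\a$ solves the atomistic problem \eqref{eq:y_a}, it is a critical point of $\E^\a - f$, but it is \emph{not} a critical point of $\E^{\rm tot}_h - f$. The quantity $\<\del \E^{\rm tot}_h(\Pi_h u^\a), v_h\> - \<\del f(\Pi_h u^\a), v_h\>$ measures exactly how far the interpolated atomistic solution is from solving the discrete problem, and Theorem~\ref{thm:cons_main} bounds (the energy part of) this residual by the right-hand side of \eqref{eq: final_err} times $\|v_h\|_E$. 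One technical point I would need to address carefully is the treatment of the external potential $f$ and the constraint to the subspace $\U_h^*$, ensuring the residual is measured against test functions $v_h \in \U_h^*$ in the $\|\cdot\|_E$ norm.

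The second ingredient is stability. Theorem~\ref{thm:stab} gives, for $K$ large, a uniform lower bound $\<\del \mathcal{G}_h(\Pi_h u^\a)\varphi_h, \varphi_h\> \ge \gamma \|\varphi_h\|_E^2$ on $\U_h^*$, where $\mathcal{G}_h$ denotes the total functional $\E^{\rm tot}_h - f$. Together with Lipschitz continuity of the second variation $\del^2\mathcal{G}_h$ in a neighbourhood of $\Pi_h u^\a$ (which follows from the regularity of $V$ and $f$ and the decay estimates of Corollary~\ref{thm:decay}, since $\del^2 W$ and $\del^2 V$ are controlled by $\nabla^j V \in L^\infty$), the hypotheses of a standard inverse-function-theorem lemma are satisfied. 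I would invoke the version used in \cite[Lemma 2.2]{2014-bqce} or the analogous statement in \cite{acta}: if the residual $\eta := \|\del \mathcal{G}_h(\Pi_h u^\a)\|_{(\U_h^*)'}$ is small relative to $\gamma^2/L$ (with $L$ the Lipschitz modulus), then there exists a genuine solution $u_h \in \U_h^*$ to \eqref{eq: solution_uh} with $\|u_h - \Pi_h u^\a\|_E \lesssim \gamma^{-1}\eta$.

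Chaining these yields $\|u_h - \Pi_h u^\a\|_E \lesssim \eta$, which by Theorem~\ref{thm:cons_main} is bounded by the right-hand side of \eqref{eq: final_err}. The final step is to convert $\|u_h - \Pi_h u^\a\|_E$ into $\|\tilu^\a - u_h\|_E$. Here I would use the triangle inequality $\|\tilu^\a - u_h\|_E \le \|\tilu^\a - \Pi_h u^\a\|_E + \|\Pi_h u^\a - u_h\|_E$ and absorb the interpolation-error term $\|\tilu^\a - \Pi_h u^\a\|_E$ into the right-hand side. This term is controlled by standard P1 finite element interpolation estimates on the mesh $\T_h$ together with the smooth interpolant properties of Lemma~\ref{lem:smooth_int}, giving contributions of the form $\|h\D^2\tilu^\a\|_{L^2(\Omega^\c_h)}$ in the continuum region (no error in the atomistic/interface region since $\Pi_h$ is exact there) plus the far-field tail $N^{-3}$ from the decay of $\tilu^\a$ by Corollary~\ref{thm:decay}; these are already present on the right-hand side of \eqref{eq: final_err}, so the estimate closes.

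I expect the main obstacle to be verifying the smallness condition on the residual $\eta$ required by the inverse function theorem, i.e.\ checking that each term on the right-hand side of \eqref{eq: final_err} is indeed $o(1)$ as $K, N \to \infty$. This requires using the decay rates from Corollary~\ref{thm:decay}, namely $|\D^j \tilu^\a(x)| \le C|x|^{-1-j}$, to show that $\|\D^2\tilu^\a\|_{L^2(\Omega^\i)}$, $\|\D^3\tilu^\a\|_{L^2(\R^2\setminus\Omega^\a)}$, $\|\D^2\tilu^\a\|^2_{L^4(\R^2\setminus\Omega^\a)}$ and the mesh-dependent terms all vanish as the regions grow; only once this is established does the abstract existence-and-uniqueness lemma apply, delivering both the existence of $u_h$ and the quantitative bound \eqref{eq: final_err} simultaneously. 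A secondary subtlety is maintaining rescale-invariance of all constants throughout, which is precisely why the $E$-norm and the rescaled boundary norms were introduced in Lemmas~\ref{lem:g_invariant} and~\ref{lem:g-1_rescale}, so that $\gamma$ and the Lipschitz modulus remain independent of the size of $\Omega_h$.
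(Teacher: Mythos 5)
Your proposal follows essentially the same route as the paper's own proof: both apply the quantitative inverse function theorem (Theorem~\ref{theo:inverse}) to $\mathcal{G}_h := \del \E^{\rm tot}_h - \del f$ at $\bar{u}_h := \Pi_h u^\a$, feed in the consistency bound of Theorem~\ref{thm:cons_main} and the stability bound of Theorem~\ref{thm:stab} (with the Lipschitz property supplied by Lemma~\ref{th:Lip}), and then conclude via the triangle inequality, absorbing the best-approximation error $\|\tilu^\a - \Pi_h u^\a\|_E \lesssim \|h\D^2\tilu^\a\|_{L^2(\Omega^\c_h)} + \|h^{3/2}\D^2\tilu^\a\|_{L^2(\Gamma_h)}$ (cf.\ \eqref{eq:best_approx_err_H1/2}) into the right-hand side. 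Your additional remarks on verifying the smallness of the residual via Corollary~\ref{thm:decay} and on rescale-invariance of the constants are exactly the considerations the paper handles implicitly through the hypothesis that $K, N$ are sufficiently large and through the rescaled norms, so the argument is sound as stated.
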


\begin{proof}
	See Section \ref{sect:proof_thm_err_2}.
\end{proof}

\begin{remark}
	The term $N^{-3}$ is in fact the linearization error. Recall that in \eqref{eq:E_lin} we approximate the Cauchy--Born strain energy $W(\D u)$ by the linearised elasticity strain energy $\tfrac{1}{2}\mu |\D u|^2$. The linearization error in first variation can (formally) be estimated by
	\begin{equation*}
	\int_{\Omega_h^\complement} \left[\pp_\mF W(\D u)\D v - \mu \D u\cdot \D v\right]
	\lesssim \int_{\Omega_h^\complement} |\D u|^2 |\D v| \lesssim \|\D u\|^2_{L^4(\Omega^\complement_h)}\|\D v\|_{L^2(\Omega_h^\complement)}.
	\end{equation*}
	Taking account of the decay of $\tilu ^\a $ from Corollary \ref{thm:decay}, we have
	\begin{equation*}
	\|\D \tilu^\a \|^2_{L^4(\Omega^\complement_h)} \lesssim N^{-3}.
	\end{equation*}
	For technical reasons we cannot directly perform such an estimate, but
	the $O(N^{-3})$ term arises in an indirect way; cf. \S \ref{sec:A3A4} and \ref{sec:A5}.
\end{remark}

\subsection{Optimal approximation parameters}\label{sec:opt_param}
In \cite{2016-qcp2} we discussed the optimization of mesh parameters for P1-FEM and P2-FEM. We now perform a similar analysis for the setting of the present work,
including the BEM approximation of the elastic far-field.


Recall that $K$ is the radius of atomistic region $\Omega^\a $ and $N$ is the radius of $\Omega_h$. To simplify the discussion we assume that the FE mesh grading is linear,
$|h(x)| \approx |x| / K$, which unsures quasi-optimal computational cost,
up to logarithmic terms. In this setting it is easy to see that the
various error contributions in \eqref{eq: final_err} are bounded by
\begin{equation*}
\begin{aligned}
\text{Modelling error:} \quad &\|\D ^2\tilu ^\a \|_{L^2(\Omega^\i) } +\|\D ^3\tilu ^\a \|_{L^2(\R ^2 \setminus \Omega ^\a )} +\|\D ^2\tilu ^\a \|^2_{L^4(\R ^2 \setminus \Omega ^\a )}  \lesssim K^{-5/2},\\
\text{FEM error: } \quad &\|h\D ^2\tilu ^\a \|_{L^2(\Omega^\c _h)} \lesssim \big(K^{-4} - K^{-2} N^{-2}\big)^{1/2},\\
\text{BEM error: } \quad &\| h^{3/2} \D ^2\tilu ^\a \|_{L^2(\Gamma_h)} \lesssim K^{-3/2} N^{-1}, \qquad \text{and} \\
\text{Linearisation error: } \quad & N^{-3}.
 \end{aligned}
\end{equation*}

The key observation is that the modelling error, which cannot be reduced by
choice of $N$ or $h$ is $O(K^{-5/2})$. By choosing $N \leq K + C$ for some
fixed constant, both the
FEM and the BEM errors also become $O(K^{-5/2})$, whereas for $N \gg K$,
we obtain that the FEM error contribution becomes $O(K^{-2})$ which is strictly
larger.

This quasi-optimal balance of approximation parameters means that we ought
to remove the nonlinear elasticity region and directly couple the atomistic
model to the BEM. The resulting error estimate is
\begin{equation}\label{eq: err_N_K}
	\| \tilu ^\a - u_h\|_{E} \lesssim K^{-5/2},
\end{equation}
which is the best possible rate that can be achieved for a sharp-interface
coupling method.

We remark, however, that the interface region (and therefore a thin layer
of Cauchy--Born elasticity) cannot be removed entirely since the BEM must
be coupled to a local elasticity model (FEM) rather than directly to the
atomistic model. Coupling directly to the atomistic model would lead to
a new consistency error usually dubbed ``ghost forces''.


\section{Conclusion}
In this work we have explored the natural combination of atomistic, finite
element and boundary element modelling from the perspective of error analysis.
The conclusion is an interesting, albeit not entirely
unexpected, one. The rapid decay of elastic fields in the point defect case
$|\D^j \tilu(x)| \lesssim |x|^{-1-j}$ means that the continuum model error
$|\D^3 \tilu|$ and
and linearisation error $|\D\tilu|^2$ are balanced. It is therefore reasonble
to entirely bypass the nonlinear elasticity model and couple the atomistic
region directly to a linearised elasticity model. This observation, as well as
additional complexities due to finite element and boundary element
discretisation errors are made precise in Theorem~\ref{thm:err_2} and in the
discussion in \S~\ref{sec:opt_param}.

Because the characteristic decay of elastic fields is different for
different material defects (or other materials modelling situations)
our conclusion cannot immediately applied to other contexts. However in
those sitations our analysis can still provide guidance on how to generalise
our results and optimally balance approximation errors due to
continuum approximations, linearisation, finite element and boundary
element approximations.


\section{Proofs: Reduction to consistency}
\label{sec:reduction to consistency}

Assuming the existence of an atomistic solution $u^\a$ to \eqref{eq:y_a}, we seek to prove the
existence of $u_h^{\rm ac} \in \mathcal{U}_h^*$ satisfying
\begin{equation}\label{dE_h}
\< \del \E^{\rm tot}_h (u_h^{\rm ac}), \varphi_h\>
= \<\delta f(u_h), \varphi_h\>,
\quad \text{for all }\varphi_h \in \mathcal{U}_h^*,
\end{equation}
and to estimate the error $\|u^\a - u_h^{\rm ac}\|_{E}$.

The error analysis consists of a best-approximation analysis
(\S~\ref{sect:best_approx}), consistency and stability estimates
(\S~\ref{sec:prf-stab}). Once these are established we apply a formulation of
the inverse function theorem (\S~\ref{sec:ift}) to obtain the existence of a
solution $u_h^{\rm ac}$ and the error estimate.

\subsection{The best approximation operator}\label{sect:best_approx}
We define a quasi-best approximation map $\Pi_h: C(\R^2; \R^m) \rightarrow \mathcal{U}_h$ to be the
nodal interpolation operator, i.e., for $f \in C(\R^2; \R^m)$,
$\Pi_h(f)|_T \in \mathbb{P}^1(T)$ for
$ T\subset \mathcal{T}_h$ and
\begin{displaymath}
\Pi_h(f)(x) =  f(x) - f_0 \qquad \text{ for all $x \in \mathcal{N}_h$},
\end{displaymath}
where $f_0$ is a constant such that $f(x) - f_0\in H_*^{1/2}(\Gamma_h)  $ for $x\in \Gamma_h$.
Then it is clear that $\Pi_hu^\a \in \U_h^*$.

\subsection{Inverse Function Theorem}
\label{sec:ift}
The proof of
this theorem is standard and can be found in various references,
e.g. \cite[Lemma 2.2]{Ortner:qnl.1d}.

\begin{theorem}[The inverse function theorem]\label{theo:inverse}
	Let $\mathcal{U}_h$ be a subspace of $\mathcal{U}$, equipped with
	$\|\nabla \cdot \|_{L^2}$, and let
	$\mathcal{G}_h \in C^1(\mathcal{U}_h,\mathcal{U}_h^*)$ with
	Lipschitz-continuous derivative $\delta \mathcal{G}_h$:
	\begin{equation*}
	\|\delta \mathcal{G}_h(u_h)-\delta \mathcal{G}_h(v_h)\|_\mathcal{L} \le M \|\nabla u_h-\nabla v_h\|_{L^2} \quad \text{for all } u_h,v_h \in \mathcal{U}_h,
	\end{equation*}
	where $\|\cdot \|_\mathcal{L}$ denotes the
	$\mathcal{L}(\mathcal{U}_h,\mathcal{U}_h^*)$-operator norm.

	Let $\bar{u}_h \in \mathcal{U}_h$ satisfy
	\begin{align}
	\|\mathcal{G}_h(\bar{u}_h)\|_{\mathcal{U}_h ^*} &\le \eta, \label{inv_consist} \\
	\langle \delta \mathcal{G}_h(\bar{u}_h)v_h,v_h\rangle &\ge \gamma \|\nabla v_h\|^2_{L^2} \quad
	\text{ for all }v_h \in \mathcal{U}_h, \label{inv_stab}
	\end{align}
	such that $M,\eta, \gamma$ satisfy the relation
	\begin{equation*}
	\frac{2M\eta}{\gamma^2}<1.
	\end{equation*}
	Then there exists a (locally unique) $u_h \in \mathcal{U}_h$ such that $\mathcal{G}_h(u_h) = 0$,
	\begin{align*}
	\|\nabla u_h-\nabla \bar{u}_h\|_{L^2}&\le 2\frac{\eta}{\gamma}, \quad \text{and} \\
	\langle \delta \mathcal{G}_h(u_h)v_h,v_h\rangle & \ge \left( 1- \frac{2M\eta}{\gamma^2}\right)\gamma \|\nabla v_h\|^2_{L^2} \quad \text{for all } v_h \in \mathcal{U}_h.
	\end{align*}
\end{theorem}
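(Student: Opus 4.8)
The plan is to recast the equation $\mathcal{G}_h(u_h) = 0$ as a fixed-point problem for a Newton-type iteration anchored at $\bar{u}_h$, and to solve it via the Banach contraction principle. First I would observe that the stability hypothesis \eqref{inv_stab} makes the bilinear form $\langle \delta\mathcal{G}_h(\bar{u}_h)\cdot,\cdot\rangle$ coercive on $\mathcal{U}_h$ with constant $\gamma$; since $\mathcal{G}_h \in C^1$ this form is also bounded, so the Lax--Milgram theorem shows that $L := \delta\mathcal{G}_h(\bar{u}_h)\colon \mathcal{U}_h \to \mathcal{U}_h^*$ is an isomorphism with $\|L^{-1}\|_{\mathcal{L}(\mathcal{U}_h^*,\mathcal{U}_h)} \le 1/\gamma$.

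Writing $u_h = \bar{u}_h + w_h$, I would introduce the map $F(w_h) := w_h - L^{-1}\mathcal{G}_h(\bar{u}_h + w_h)$ on $\mathcal{U}_h$, whose fixed points are exactly the solutions of $\mathcal{G}_h(\bar{u}_h + w_h) = 0$. The goal is to show that $F$ is a contraction mapping the closed ball $B_r := \{w_h : \|\nabla w_h\|_{L^2} \le r\}$, with $r := 2\eta/\gamma$, into itself. The key identity is the fundamental-theorem-of-calculus representation
\[
F(w_1) - F(w_2) = L^{-1}\!\int_0^1 \bigl[L - \delta\mathcal{G}_h(\bar{u}_h + w_2 + t(w_1 - w_2))\bigr](w_1 - w_2)\,\mathrm{d}t .
\]
Combining this with the Lipschitz bound on $\delta\mathcal{G}_h$ and $\|L^{-1}\| \le 1/\gamma$ gives the contraction estimate $\|\nabla(F(w_1) - F(w_2))\|_{L^2} \le (Mr/\gamma)\,\|\nabla(w_1 - w_2)\|_{L^2}$ on $B_r$, with factor $Mr/\gamma = 2M\eta/\gamma^2 < 1$ by the smallness assumption.

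The main obstacle is the self-mapping property, which must be verified under precisely the stated condition $2M\eta/\gamma^2 < 1$ rather than a factor-of-two stronger one. This is the delicate step: taking $w_2 = 0$ in the identity above and using that the integrand carries the Lipschitz weight $\|\nabla(t w_1)\|_{L^2} = t\|\nabla w_1\|_{L^2}$, the factor $\int_0^1 t\,\mathrm{d}t = 1/2$ produces the sharp quadratic bound $\|\nabla(F(w_h) - F(0))\|_{L^2} \le (M/2\gamma)\,\|\nabla w_h\|^2_{L^2}$. Since $F(0) = -L^{-1}\mathcal{G}_h(\bar{u}_h)$ satisfies $\|\nabla F(0)\|_{L^2} \le \eta/\gamma$ by \eqref{inv_consist}, for $w_h \in B_r$ I obtain $\|\nabla F(w_h)\|_{L^2} \le \eta/\gamma + (M/2\gamma)r^2 = (\eta/\gamma)(1 + 2M\eta/\gamma^2) \le 2\eta/\gamma = r$, so $F(B_r) \subseteq B_r$. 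Without the sharp $1/2$ one would only be able to close the argument under $4M\eta/\gamma^2 \le 1$.

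Banach's theorem then yields a unique fixed point $w_h^* \in B_r$; setting $u_h := \bar{u}_h + w_h^*$ gives $\mathcal{G}_h(u_h) = 0$ with $\|\nabla(u_h - \bar{u}_h)\|_{L^2} \le 2\eta/\gamma$, and local uniqueness is exactly the uniqueness within $B_r$. Finally, for the perturbed stability I would split $\langle \delta\mathcal{G}_h(u_h)v_h, v_h\rangle = \langle \delta\mathcal{G}_h(\bar{u}_h)v_h, v_h\rangle + \langle[\delta\mathcal{G}_h(u_h) - \delta\mathcal{G}_h(\bar{u}_h)]v_h, v_h\rangle$ and bound the second term by the Lipschitz estimate together with $\|\nabla(u_h-\bar{u}_h)\|_{L^2} \le 2\eta/\gamma$, which gives $\langle \delta\mathcal{G}_h(u_h)v_h,v_h\rangle \ge (\gamma - 2M\eta/\gamma)\|\nabla v_h\|^2_{L^2} = (1 - 2M\eta/\gamma^2)\gamma\|\nabla v_h\|^2_{L^2}$, precisely the claimed lower bound.
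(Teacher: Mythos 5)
Your proof is correct, and it is essentially the standard argument that the paper itself defers to (it cites \cite[Lemma 2.2]{Ortner:qnl.1d} rather than reproducing a proof): a Banach contraction argument for the Newton-type map $F(w) = w - L^{-1}\mathcal{G}_h(\bar{u}_h+w)$, with the sharp factor $\tfrac12$ from the integral remainder needed to close the self-mapping step under $2M\eta/\gamma^2<1$, and the final stability bound obtained by perturbing $\delta\mathcal{G}_h(\bar{u}_h)$ via the Lipschitz estimate. The only implicit point worth noting is that Lax--Milgram and the fixed-point theorem require $\mathcal{U}_h$ (hence the closed ball $B_r$) to be complete, which holds here since $\mathcal{U}_h$ is a finite-dimensional finite element space.
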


To put Theorem \ref{theo:inverse} (Inverse Function Theorem) into our context,
let
\begin{equation*}
\mathcal{G}_h(v) : =  \del \E^{\rm tot}_h(v) -\del f(v) \quad \text{and}\quad \bar{u}_h := \Pi_h u^\a,
\end{equation*}
where $u^\a$ is a solution to \eqref{eq:y_a}.

To make \eqref{inv_consist} and \eqref{inv_stab} concrete we will show that
there exist $\eta,\gamma >0$ such that, for all $\varphi_h \in \mathcal{U}^*_h$,
\begin{equation*}
\begin{aligned}
\langle \del \E^{\rm tot}(\Pi_h u^\a ),\varphi_h\rangle - \<\del f(\Pi_h u^\a), \varphi_h\>&\le \eta \| \varphi_h\|_{ E}, \quad (consistency) \\
\langle \del^2 \E^{\rm tot}(\Pi_h u^\a ) \varphi_h, \varphi_h \rangle -\<\del^2 f(\Pi_h u^\a)\varphi_h, \varphi_h\> &\ge \gamma  \| \varphi_h\|^2_{ E}. \quad (stability)
\end{aligned}
\end{equation*}
Ignoring some technical requirements, the inverse function theorem implies that, if $\eta / \gamma $ is sufficiently small, then there exists $u^{\rm ac}_h \in \mathcal{U}_h^*$ such that
\begin{equation*}
\begin{aligned}
&\langle \del \E^{\rm tot}(u_h^{\rm ac} ),\varphi_h\rangle - \< \del f(u_h^{\rm ac}), \varphi_h\>=0, \quad \forall \varphi_h \in \mathcal{U}^*_h, \quad \text{and}\\
&\| u^{\rm ac}_h -  \Pi_h u^\a\|_{ E}  \le 2 \frac{\eta}{\gamma}.
\end{aligned}
\end{equation*}
Finally adding the best approximation error $\| \Pi_h u^\a -  u^\a\|_{H_*}$
gives the error estimate
\begin{displaymath}
\|u^{\rm ac}_h -  u^\a\|_{E} \leq
\| \Pi_h u^\a - u^\a\|_{E} + 2 \frac{\eta}{\gamma}.
\end{displaymath}

\subsection{Stability and Lipschitz condition}
\label{sec:prf-stab}
The Lipschitz and consistency estimates require bounds on the partial derivatives of $V$.  For $\bfg\in \R^{m\times 6}$, define the first
and second partial derivatives, for  $i,j = 1,\dots,6$, by
\begin{equation*}
\pp_jV(\bfg) := \frac{\pp V(\bfg)}{\pp g_j} \in \R^m, \quad \text{and} \quad \pp_{i,j}V(\bfg) : = \frac{\pp^2 V(\bfg)}{\pp g_i\pp g_j} \in \R^{m\times m},
\end{equation*}
and similarly for the third derivatives $\pp_{i,j,k}V(\bfg)\in \R^{m\times
m\times m}$. We assumed in \S~\ref{sec:atomistic-model} that second and higher
derivatives are bounded, hence we can define the constants
\begin{align}
M_2:& = \sum_{i,j=1}^6 \sup_{\bfg\in \R^{m\times 6}} \sup _{\substack{h_1,h_2 \in \R^2,\\|h_1| = |h_2| = 1}} \pp_{i,j} V(\bfg)[h_1,h_2] < \infty, \quad \text{and} \label{def:M_2}\\
M_3:& = \sum_{i,j,k=1}^6 \sup_{\bfg\in \R^{m\times 6}} \sup _{\substack{h_1,h_2,h_3 \in \R^2,\\ |h_1| = |h_2|=|h_3 |= 1}} \pp_{i,j,k} V(\bfg)[h_1,h_2,h_3] < \infty. \label{def:M_3}
\end{align}

With the above bounds it is easy to show that

\begin{equation} \label{eq:lip_V}
	\begin{split}
		\sum_{i=1}^6|\pp_i V(\bfg) - \pp_i V(\bfh)| &\le M_2 \max_{j = 1,\dotsc,6} |g_j-h_j|, \quad \text{and} \\
		\sum_{i,j=1}^6|\pp_i\pp_j V(\bfg) - \pp_i\pp_j V(\bfh)| &\le M_3 \max_{k = 1,\dotsc,6} |g_k-h_k|,
		\quad \text{for }\bfg,\bfh \in \R^{m\times 6}.
	\end{split}
\end{equation}

We can now obtain the following Lipschitz continuity and stability results.

\begin{lemma}
	\label{th:Lip}
	There exists $M>0$ such that
	\begin{equation}\label{eq:lip}
	\|\delta \mathcal{G}_h(u_h)-\delta \mathcal{G}_h(v_h)\|_\mathcal{L} \le M \| u_h- v_h\|_{E} \quad \text{for all } u_h,v_h \in \mathcal{U}^*_h,
	\end{equation}
	where $\|\cdot\|_{\mathcal{L}}$ denotes the operator norm associated
	with $\|\cdot\|_E$.
\end{lemma}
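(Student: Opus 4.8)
The plan is to exploit the special structure of $\E^\tot_h$. Writing $\mathcal{G}_h = \del\E^\tot_h - \del f$ and recalling \eqref{eq:Etot_h}, the boundary contribution $Q(u) := \tfrac{\mu}{2}\int_{\Gamma_h}(\gamma_0^{\rm int}u)\,g_h^{-1}(\gamma_0^{\rm int}u)$ is a \emph{quadratic} functional of $u$, since both $\gamma_0^{\rm int}$ and $g_h^{-1}$ are linear. Hence its second variation $\del^2 Q$ is independent of the base point and cancels identically in the difference $\del\mathcal{G}_h(u_h) - \del\mathcal{G}_h(v_h)$. Consequently the entire BEM apparatus plays no role in the Lipschitz constant, and it suffices to estimate $\del^2\E^\ac_h(u_h) - \del^2\E^\ac_h(v_h)$ together with $\del^2 f(u_h) - \del^2 f(v_h)$ in the operator norm associated with $\|\cdot\|_E$.

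Next I would expand the second variation of the a/c energy \eqref{def:ac_general}, tested against $\varphi_h,\psi_h \in \mathcal{U}_h^*$, into its three pieces: the atomistic sum over $\As$, the interface sum over $\Is$ through the reconstruction $\mathcal{R}_\ell$, and the Cauchy--Born integral over $\Omc_h$. In each summand the base-point dependence enters only through a factor $\partial^2 V(Du_h(\ell)) - \partial^2 V(Dv_h(\ell))$ (respectively $\partial^2 W(\D u_h) - \partial^2 W(\D v_h)$ in the continuum), which by the Lipschitz bounds \eqref{eq:lip_V}, with constant $M_3$ from \eqref{def:M_3}, is controlled by $\max_k |D_k(u_h-v_h)(\ell)|$, respectively by $|\D(u_h-v_h)|$ using boundedness of $\D^3 W$. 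The interface terms are handled identically after noting that $\mathcal{R}_\ell$ is a fixed bounded linear operator.

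The remaining step is a Hölder estimate turning these three-factor expressions into a product of three $\|\cdot\|_E$ norms. For the lattice sums I would bound $\sum_\ell |D(u_h-v_h)(\ell)|\,|D\varphi_h(\ell)|\,|D\psi_h(\ell)| \le \|D(u_h-v_h)\|_{\ell^\infty}\|D\varphi_h\|_{\ell^2}\|D\psi_h\|_{\ell^2}$ and use the trivial embedding $\|\cdot\|_{\ell^\infty} \le \|\cdot\|_{\ell^2}$, after which Lemma~\ref{lem:smooth_int} converts each discrete $\ell^2$ norm into $\|\D\cdot\|_{L^2(\Omega_h)} \le \|\cdot\|_E$. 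The force term is immediate: since $f \in C^r(\dot{\mathcal{U}}^{1,2})$ with $r \ge 5$ and $\partial_{u(\ell)}f = 0$ for $|\ell| \ge R_f$, the third variation $\del^3 f$ is bounded and finitely supported, giving a Lipschitz bound on $\del^2 f$ directly in terms of $\|u_h-v_h\|_E$. Collecting the constants ($M_2$, $M_3$, $\|\D^3 W\|_\infty$, the bound on $\del^3 f$, and the interpolation constants of Lemma~\ref{lem:smooth_int}) yields the asserted $M$.

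The main obstacle is the continuum integral $\int_{\Omc_h}|\D(u_h-v_h)|\,|\D\varphi_h|\,|\D\psi_h|$: unlike the lattice sums, the embedding $\ell^2 \hookrightarrow \ell^\infty$ has no direct $L^\infty \hookrightarrow L^2$ analogue, so bounding this triple product of element-wise constant gradients by $\|u_h-v_h\|_E\|\varphi_h\|_E\|\psi_h\|_E$ requires either an $L^\infty$ bound on one factor via an inverse inequality on the shape-regular mesh \eqref{eq:unif-shape-reg}, or a Hölder split $L^2 \cdot L^4 \cdot L^4$ combined with a discrete Sobolev embedding in two dimensions; tracking the dependence of the resulting constant on the mesh is the one genuinely delicate point, and since the statement only asserts finiteness of $M$ this suffices.
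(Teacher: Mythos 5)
Your proposal is correct, and in fact the paper offers \emph{no} proof of this lemma to compare against: it only prepares the constants $M_2$, $M_3$ in \eqref{def:M_2}--\eqref{def:M_3} and the Lipschitz bounds \eqref{eq:lip_V}, then states the result as routine. Your argument supplies exactly the missing content, and its central structural observation is the right one: the BEM contribution to \eqref{eq:Etot_h} is a quadratic functional of $u_h$, because the trace and the operator $g_h^{-1}$ defined by the linear variational problem \eqref{eq: def_g_h} are both linear, so its second variation is independent of the base point and cancels identically in $\delta\mathcal{G}_h(u_h)-\delta\mathcal{G}_h(v_h)$. This reduces the lemma to the standard a/c Lipschitz estimate via \eqref{eq:lip_V} and boundedness of $\nabla^3 W$, which is presumably why the authors omitted the proof.

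Two points in your write-up deserve tightening. First, the continuum triple product $\int_{\Omega^{\rm c}_h}|\nabla(u_h-v_h)|\,|\nabla\varphi_h|\,|\nabla\psi_h|$ resolves more cleanly than you indicate: since $\mathcal{T}_h$ coincides with the atomistic triangulation near the core and only \emph{coarsens} outward, every element satisfies ${\rm diam}(T)\gtrsim 1$, so shape regularity \eqref{eq:unif-shape-reg} gives $|T|\gtrsim 1$ and the inverse estimate $\|\nabla w_h\|_{L^\infty(T)} = |T|^{-1/2}\|\nabla w_h\|_{L^2(T)}\lesssim \|\nabla w_h\|_{L^2(T)}$ holds with a constant independent of $K$, $N$ and the mesh. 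This uniformity is not a luxury: the inverse function theorem argument of \S\ref{sec:ift} requires $2M\eta/\gamma^2<1$, so $M$ must not degenerate as the computational domain grows, and with this observation it does not. Second, your treatment of the force term overreaches: $f\in C^r(\dot{\mathcal{U}}^{1,2})$ alone does not give a \emph{globally} bounded third variation (unlike $V$, whose derivatives $\nabla^jV$, $j=2,\dots,5$, are assumed bounded); you should either impose an analogous boundedness assumption on $\delta^3 f$, or state the Lipschitz bound only on a bounded neighbourhood of $\Pi_h u^{\rm a}$, which is all the inverse function theorem needs. With these two repairs your proof is complete and is, as far as one can tell, the argument the authors intended.
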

%
%


\section{Proofs: Consistency}

\subsection{Interpolants}
In this section we introduce two interpolants that are necessary tools
for our analysis.

\subsubsection{Test function $v$}\label{sec: test_fun_v}
The consistency error $\del \E^{\rm tot}_h(\Pi_h u^\a)$ will be bounded by estimating
\[
	\langle \del \E^{\rm tot}_h(\Pi_h u^\a), v_h \rangle
			- \langle \del \E^{\rm tot}(u^\a), v \rangle
	\leq \eta_h \| v_h \|_E \qquad \forall v_h \in \U^*_h,
\]
with $v \in \dot{\mathcal{U}}^{1,2}$ chosen arbitrarily.
The purpose of this section is to construct such $v = \Pi_h^* v_h$,
where $\Pi_h^* : \U_h^* \to \dot{\mathcal{U}}^{1,2}$.

Given some $v_h \in \U^* _h$ the first step is to extend $v_h$ to
 $\R^2$. Let $v^E_h$ be the solution to the  exterior Dirichlet
 problem
\begin{equation}\label{eq:lapv_h}
\begin{aligned}
-\Delta v^E_h &= 0, \quad \text{in } {\Omega}_h^\complement,\\
v^E_h & = v_h , \quad \text{on } \pp {\Omega}_h, \\
v^E_h & = v_h, \quad \text{in }\Omega_h,\\
|v^E_h(x)| &= \mathcal{O}\left({\frac{1}{|x|}}\right) \quad \text{as }|x|\rightarrow \infty,
\end{aligned}
\end{equation}
where we note that the last condition can be imposed because
$v_h \in \U_h^*$.

Next, we adapt the quasi-interpolation operator introduced in
\cite{carstensen} to ``project'' ${v}^E_h$ to $\Uss$. Let $\phi_\ell$ be the piecewise linear hat-functions on
the atomistic triangulation $\T$, i.e., the canonical triangulation associated with $\L$. Define
\begin{equation*}
\phi^{\rm PU}_\ell := \frac{\phi_\ell}{\sum_{k\in \Cs } \phi_k }  \qquad \forall \ell \in \Cs,
\end{equation*}
where $\Cs$ is the continuum lattice sites as defined in Section \ref{sec:g23_coupling}.
It is clear that $\{ \phi^{\rm PU}_\ell\}_{\ell \in \Cs }$ is a
partition of unity of $\R ^2\setminus (\Omega^\a \cup \Omega^\i )$.

In order to estimate the interpolation error and modelling error in \eqref{eq:consistency_decomposition}, we need $v-v^E_h$ to vanish in
$\Omega^\a \cup \Omega^\i $ and on $\Gamma_h$. This is made possible due to
assumption {\bf (A3)}.

Now we refer to \cite{carstensen} for the contruction of a linear interpolant of $v^E_h\in \U_h $ . We shall define the interpolant as follows:
\begin{equation}\label{def: varphi_intp}
\Pi_h^* v_h(x) :=  v(x) := v_1(x)+v_2(x), \quad \forall x \in \R ^2,
\end{equation}
where
\begin{equation*}
\begin{aligned}
v_1(\ell) &:= \cases{
	v_h(\ell),  & \ell \in \As \cup \Is \cup \Is^+ \cup (\Gamma_h \cap \Cs ), \\
	\frac{\int_{\R ^2}\phi_\ell v^E _h }{\int_{\R ^2 }\phi_\ell}, & \ell \in \Cs \setminus (\Is ^+\cup \Gamma_h),} \\
v_1(x) &:= \sum_{\ell \in \Lambda} v_1(\ell) \phi_\ell(x), \quad \forall x \in \R ^2,\\
v_2(\ell) &:= \cases{
	\frac{\int_{\R ^2 }(v^E _h-v_1) \phi_\ell^{\rm PU}}{\int_{\R ^2} \phi_\ell}, & \ell \in \Cs \setminus(\Is ^+\cup \Gamma_h),\\
	0, & \ell \in \As \cup \Is \cup \Is ^+ \cup (\Gamma_h \cap \Cs ),}\\
v_2(x) &:= \sum_{\ell \in \L} v_2(\ell) \phi_\ell(x), \quad \forall x \in \R ^2.
\end{aligned}
\end{equation*}

Note that with the assumption {\bf(A3)}, we have
\begin{equation*}
v(x) - v_h(x) = 0, \quad\forall x \in \Gamma_h.
\end{equation*}

We can use \cite[Theorem 3.1]{carstensen} to conclude that
\begin{equation*}
\|\D v\|_{L^2(\R ^2)} \lesssim \|\D v^E_h\|_{L^2(\R ^2)}, \quad \forall v_h \in \mathcal{U}_h .
\end{equation*}

Furthermore, since $v_h^E$ is the extension of $v_h$ via the exterior Laplace problem \eqref{eq:lapv_h}, we can link its energy norm to boundary norm of $v_h$. By the regularity of $g^{-1}$ in Lemma \ref{lem:g-1_rescale} we have
\begin{equation}\label{eq:eqv_v_v_h^E}
\|\D v^E_h\|^2_{L^2(\Omega_h^\complement)}  =  \<g^{-1} v_h, v_h\> \lesssim \|v_h\|^2_{H^{1/2}_{\Gamma_h}}.
\end{equation}
Therefore we have
\begin{equation} \label{eq: v-v_h^E -v_h}
\|\D v\|_{L^2(\R ^2)} \lesssim \|\D v_h\|_{L^2(\Omega_h)} + \|\D v^E_h\|_{L^2(\Omega_h^\complement )}
	\lesssim \|\D v_h\|_{L^2(\Omega_h)} + \|v_h\|_{H^{1/2}_{\Gamma_h}} \lesssim \|v_h\|_E.
\end{equation}

\subsubsection{Linearized elasticity approximation $w$}\label{sec:def_w}

Recall that $u^\a \in \dot{\mathcal{U}}^{1,2}$ is the exact atomistic solution
and Lemma \ref{lem:smooth_int} shows that there exists a $C^{2,1}$-regular
interpolant $\tilu^\a $ of $u ^\a $.

In order to make use of existing BEM approximation error estimates \eqref{eq:
g-g_h}, we need the conormal derivative in $H^{1}(\Gamma_h)$ of a solution
to Laplace's equation ($\tilu^\a$ only solves Laplace's equation approximately).
To that end, we introduce an intermediate problem on a domain with smooth boundary inside $\Omega_h$. Let $\mathcal{B}_R\subset \Omega_h$ be a ball with radius $R = \frac{2}{3} N$. To ensure the appropriate Dirichlet boundary condition, we use \eqref{eq:proj_to_*} to define the following function: let $u^\a_0$ be a constant such that
\begin{equation*}
u_R^\a : = \tilu ^\a -u^\a _0 \quad \text{ and } u_R^\a|_{\pp \mathcal{B}_R} \in H^{1/2}_*(\pp \mathcal{B}_R).
\end{equation*}
Let $w$ be the solution to the exterior Dirichlet problem
\begin{equation}\label{eq:lapw}
\begin{aligned}
-\Delta w &= 0, \quad \text{in } {\mathcal{B}}_R^\complement,\\
w & = u_R^\a , \quad \text{on } \pp \mathcal{B}_R, \\
|w(x)| &= \mathcal{O}\left({\frac{1}{|x|}}\right) \quad \text{as }|x|\rightarrow \infty.
\end{aligned}
\end{equation}

\begin{lemma} \label{lem:w-ua} The Dirichlet problem \eqref{eq:lapw} has a unique solution and
	\begin{equation}\label{eq:w-ua}
	\|\D w - \D \tilu ^\a \|_{L^2(\mathcal{B}_{R}^\complement)} \lesssim R^{-3}.
	\end{equation}

	\end{lemma}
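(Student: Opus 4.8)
The plan is to prove the two assertions separately: the well-posedness of the exterior Dirichlet problem \eqref{eq:lapw}, and then the quantitative comparison \eqref{eq:w-ua}. For existence and uniqueness I would argue as follows. Since $\tilu^\a\in C^{2,1}(\R^2)$, its trace on $\pp\mathcal{B}_R$ lies in $H^{1/2}(\pp\mathcal{B}_R)$, and by the choice of the constant $u^\a_0$ through \eqref{eq:proj_to_*} the shifted datum $u^\a_R|_{\pp\mathcal{B}_R}$ lies in $H^{1/2}_*(\pp\mathcal{B}_R)$. I would then solve \eqref{eq:lapw} with the single-layer ansatz $w=A\psi$, $\psi\in H^{-1/2}_*(\pp\mathcal{B}_R)$; the Dirichlet condition becomes $V\psi=u^\a_R$, which is uniquely solvable because $V:H^{-1/2}_*(\pp\mathcal{B}_R)\to H^{1/2}_*(\pp\mathcal{B}_R)$ is an isomorphism (Lemma \ref{lem:iso}). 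Restricting to the mean-zero subspace is exactly what removes the logarithmic mode and forces the far-field constant to vanish, so that $w$ decays like $O(1/|x|)$, as in the $u_0=0$ discussion following Lemma \ref{lem:iso} and in Theorem \ref{thm: ellip}. Uniqueness is immediate, since a difference of two solutions is harmonic, has zero trace and finite Dirichlet energy, hence vanishes.

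For the error estimate I would set $e:=w-u^\a_R=w-\tilu^\a+u^\a_0$, so that $\D e=\D w-\D\tilu^\a$, $e=0$ on $\pp\mathcal{B}_R$, and $e=O(1/|x|)$ at infinity. Because $w$ is harmonic in $\mathcal{B}_R^\complement$ and $e$ is an admissible finite-energy test function vanishing on $\pp\mathcal{B}_R$, the weak formulation gives the Galerkin orthogonality $\int_{\mathcal{B}_R^\complement}\D w\cdot\D e=0$ (the boundary term on $\{|x|=\rho\}$ is $O(\rho^{-2})\to0$, using $e=O(\rho^{-1})$ and $\D w=O(\rho^{-2})$), whence
\[
\|\D e\|^2_{L^2(\mathcal{B}_R^\complement)}=-\int_{\mathcal{B}_R^\complement}\D\tilu^\a\cdot\D e.
\]
The decisive observation is that a crude pointwise bound $|\Delta\tilu^\a|\lesssim|x|^{-3}$ fed through the (logarithmic) $2$D exterior Hardy inequality would yield only $\|\D e\|\lesssim R^{-1}$; to reach $R^{-3}$ one must exploit that $\tilu^\a$ is \emph{almost harmonic} far from the defect.

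To capture this I would write $\sigma:=\pp_\mF W(\D\tilu^\a)$ and Taylor-expand, using $\pp_\mF W(0)=0$ and $\pp^2_\mF W(0)=\mu I$, as $\sigma=\mu\D\tilu^\a+r$ with $|r|\lesssim|\D\tilu^\a|^2$, so that $-\int\D\tilu^\a\cdot\D e=-\mu^{-1}\int\sigma\cdot\D e+\mu^{-1}\int r\cdot\D e$. Extending $e$ by zero into $\mathcal{B}_R$ makes it supported in the defect-free far field, where the atomistic equilibrium for $u^\a$ holds against it; comparing the discrete balance with the Cauchy--Born weak form bounds the first integral by the bulk consistency error, $\lesssim\|\D^3\tilu^\a\|_{L^2(\mathcal{B}_R^\complement)}\|\D e\|$ (in the spirit of \cite{EhrOrtSha:2013,2014-bqce}), while Cauchy--Schwarz bounds the second by $\lesssim\|\D\tilu^\a\|^2_{L^4(\mathcal{B}_R^\complement)}\|\D e\|$. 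Dividing by $\|\D e\|$ and inserting the decay rates of Corollary \ref{thm:decay}, namely $\|\D^3\tilu^\a\|_{L^2(\mathcal{B}_R^\complement)}\lesssim R^{-3}$ and $\|\D\tilu^\a\|^2_{L^4(\mathcal{B}_R^\complement)}\lesssim R^{-3}$, then yields \eqref{eq:w-ua}.

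The main obstacle is precisely this last manoeuvre: representing the non-harmonicity of $\tilu^\a$ in a divergence/weak form that pairs directly with $\D e$, so as to bypass the borderline logarithmic $2$D Hardy estimate and convert the slow $O(R^{-1})$ into the sharp $O(R^{-3})$ governed by the $L^4$-decay of $\D\tilu^\a$ and the $L^2$-decay of $\D^3\tilu^\a$. Making the consistency bound on $\int\sigma\cdot\D e$ rigorous—that is, passing from the discrete atomistic equilibrium satisfied by $u^\a$ to the continuous Cauchy--Born model for $\tilu^\a$—is the step demanding the most care, alongside the routine but necessary justification of the vanishing boundary contribution at infinity in the exterior domain.
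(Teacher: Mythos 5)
Your proposal is correct and follows essentially the same route as the paper: after the Galerkin-orthogonality identity $\|\D e\|_{L^2(\mathcal{B}_R^\complement)}^2 = -\int \D\tilu^\a\cdot\D e$, your split into the Cauchy--Born consistency term (using the atomistic equilibrium $\<\del\E^\a(u^\a),e\>=0$) and the linearization remainder $r$ with $|r|\lesssim|\D\tilu^\a|^2$ is exactly the paper's decomposition $B = B_2 + B_1$, each bounded by $R^{-3}\|\D e\|_{L^2}$ via the decay of $\tilu^\a$. Your treatment of existence via the single-layer ansatz and of the boundary constant (working with $e = w - u^\a_R$ rather than extending $\tilu^\a - w$ by zero) is in fact slightly more careful than the paper's, but not a different method.
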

\begin{proof}
	From Section \ref{sect: steklov-poincare} we know that this exterior
	Dirichlet problem has a unique solution. To estimate \eqref{eq:w-ua}, we let
	$\phi := \tilu ^\a - w$, extended by zero to $\mathcal{B}_{R}$ then
	\begin{align*}
		\|\D \phi \|_{L^2(\mathcal{B}_{R}^\complement)}^2
		&= \int_{\mathcal{B}_{R}^\complement}
					(\D \tilu^\a - \D w) \cdot \nabla \phi \\
		&= \int_{\R^2} \D \tilu^\a \cdot \nabla \phi =: B
	\end{align*}

	Next, we use the fact that $B$ is a linearised continuum approximation
	to the atomistic equilibrium equations. Recalling that $u^\a $ is an
	atomistic solution, i.e.,
	\begin{equation*}
	\< \del \E ^\a(u^\a), \phi\> = 0 \qquad \forall \phi \in \Uss,
	\end{equation*}
	and that $\pp^2_\mF W(0) = \mu I$, we can split $B$ into
	\begin{equation*}
	\begin{aligned}
	B 
	= &\int_{\R ^2} \D \tilu ^\a \cdot \D \phi   - \mu^{-1} \<\del \E ^\a(u^\a), \phi\>\\
	= &\left( \int_{\R ^2} \D \tilu ^\a \cdot \D \phi - \mu^{-1} \int_{\R ^2} \pp_\mF W(\D \tilu ^\a) \cdot \D \phi \right) \\
	& +  \mu^{-1}  \left(  \int_{\R ^2} \pp_\mF W(\D \tilu ^\a) \cdot \D \phi -  \<\del \E ^\a(u^\a), \phi\>\right) \\
	& =: B_1+B_2.
	\end{aligned}
	\end{equation*}
	For $B_1$, we apply Taylor's expansion and use
	$\pp^2_\mF W(0) = \mu I$ to obtain
	\begin{align*}
	|B_1|
	\leq& \bigg| \int_{\R ^2} \Big(\D \tilu^\a - \mu^{-1} \partial_F W(0)
				- \mu^{-1} \partial_F^2 W(0) \D \tilu^\a\Big) \cdot \D \phi \bigg|
				 	+ C \int_{\R^2} |D\tilu^\a|^2\,|\D\phi| \\
	=&  C \int_{\R^2} |\D\tilu^\a|^2\,|\D\phi|
	\leq C \| \D\tilu^\a \|_{L^4(\mathcal{B}_{R}^\complement)}^2
			\| \D\phi \|_{L^2}
	\leq C R^{-3} \| \D\phi \|_{L^2},   
	\end{align*}
	where the constant $C$ is independent of $\tilu ^\a $ and $\phi$.

	$B_2$ is the Cauchy--Born modelling error
	 which is well understood, e.g., in \cite{PRE-ac.2dcorners} it is proven
	 that
	\begin{equation*}
		B_2 \le \int_{\R^2} (C_1 |\D ^3\tilu ^\a|+ C_2|\D ^2\tilu ^\a |^2
	) |\D \phi|,
	\end{equation*}
	hence we obtain
	\begin{align*}
		B_2 &\leq \Big(\| \D^3 \tilu^\a \|_{L^2(\mathcal{B}_{R}^\complement)}
				+ \| \D^2 \tilu^\a \|_{L^4(\mathcal{B}_{R}^\complement)}^2 \Big)
				\| \D\phi \|_{L^2} \\
			&\leq \big( R^{-3} + R^{-5} \big)	\| \D\phi \|_{L^2}.
	\end{align*}
	Combining the estimates for $B_1$ and $B_2$ yields the stated result.
\end{proof}

The second estimate we require for $w$ is for the decay of $\D^2 w$.

\begin{lemma} \label{th:decay_D2w}
	Let $w$ be given by \eqref{eq:lapw}, and $R \leq \frac23 N$, where $N$ is the inner radius of $\Omega_h$, then
	\begin{equation} \label{eq:decay_D2w}
		|\D^2 w(x)| \lesssim |x|^{-3} \qquad \text{for } |x| \geq N
	\end{equation}
	and in particular,
	\begin{equation} \label{eq:norm_D2w}
		\| \D^2 w \|_{L^2(\Omega_h^\complement)}
			\lesssim N^{-5/2}
	\end{equation}
\end{lemma}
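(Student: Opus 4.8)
The plan is to exploit that $w$ is harmonic on $\mathcal{B}_R^\complement$ and that the region $\{|x|\ge N\}$ sits well inside this domain, since $R\le\tfrac23 N$. On such a region the standard interior derivative estimates for harmonic functions apply, and they convert a pointwise bound on $w$ itself into a bound on $\D^2 w$ at the cost of two powers of the (large) distance to $\pp\mathcal{B}_R$. Thus the whole lemma reduces to establishing the sharp pointwise decay $|w(x)|\lesssim |x|^{-1}$ with a constant that does \emph{not} depend on $R$ or $N$; this uniformity is what I expect to be the main obstacle, precisely because the paper needs all constants to be independent of the domain size.

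To obtain that decay I would first control the Dirichlet data. By Corollary~\ref{thm:decay}, $|\D\tilu^\a(x)|\lesssim|x|^{-2}$, so $\tilu^\a$ converges to a constant at infinity with $|\tilu^\a(x)-\tilu^\a(\infty)|\lesssim|x|^{-1}$; hence on $\pp\mathcal{B}_R$ the data $u_R^\a=\tilu^\a-u^\a_0$ satisfies $\|u_R^\a\|_{L^\infty(\pp\mathcal{B}_R)}\lesssim R^{-1}$, and by construction $u_R^\a\in H^{1/2}_*(\pp\mathcal{B}_R)$, i.e.\ it has vanishing mean. Rescaling $\mathcal{B}_R$ to the unit ball, the vanishing-mean property kills the logarithmic and constant parts of the exterior-harmonic solution (this is exactly the mechanism by which the far-field constant $u_0$ vanishes for $H^{1/2}_*$ data, cf.\ Theorem~\ref{thm: ellip}), so the maximum principle gives $|w(x)|\lesssim R\,\|u_R^\a\|_{L^\infty(\pp\mathcal{B}_R)}\,|x|^{-1}\lesssim |x|^{-1}$ with a constant independent of $R,N$. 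The zero-mean structure is essential here: without it a $\log|x|$ term would destroy the decay entirely.

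With $|w|\lesssim|\cdot|^{-1}$ in hand I would prove \eqref{eq:decay_D2w} directly. Fix $x$ with $|x|\ge N$ and set $\rho:=\tfrac14|x|$. For $y\in B_\rho(x)$ one has $|y|>|x|-\rho=\tfrac34|x|\ge\tfrac34 N>\tfrac23 N\ge R$, so $B_\rho(x)\subset\mathcal{B}_R^\complement$ and $w$ is harmonic there. The interior estimate $|\D^2 w(x)|\lesssim\rho^{-2}\sup_{B_\rho(x)}|w|$ together with $\sup_{B_\rho(x)}|w|\lesssim(\tfrac34|x|)^{-1}\lesssim|x|^{-1}$ then yields $|\D^2 w(x)|\lesssim|x|^{-2}\cdot|x|^{-1}=|x|^{-3}$, which is \eqref{eq:decay_D2w}.

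For \eqref{eq:norm_D2w} I would integrate this pointwise bound over $\Omega_h^\complement\subset\{|x|\ge N\}$, reducing the claim to the elementary polar integral $\int_{|x|\ge N}|x|^{-6}\dx$. Should one wish to sharpen the constants by also feeding in the $L^2$ closeness $\|\D w-\D\tilu^\a\|_{L^2(\mathcal{B}_R^\complement)}\lesssim R^{-3}$ from Lemma~\ref{lem:w-ua}, an equivalent route is a dyadic Caccioppoli argument: on each annulus $\{2^kN\le|x|<2^{k+1}N\}$ estimate $\|\D^2 w\|_{L^2}$ by $(2^kN)^{-1}\|\D w\|_{L^2}$ on a slightly enlarged annulus, split $\D w=\D\tilu^\a+(\D w-\D\tilu^\a)$, and sum the resulting geometric series in $k$. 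Either route is routine once the uniform pointwise decay of the previous step is in place.
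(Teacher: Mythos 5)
Your route to the pointwise bound \eqref{eq:decay_D2w} is genuinely different from the paper's, and in outline it is sound. The paper exploits the circular boundary $\pp\mathcal{B}_R$ to solve \eqref{eq:lapw} explicitly by separation of variables, $w(r,\theta)=\sum_{k}\hat a_k(R)(r/R)^{-|k|}e^{ik\theta}$, and then estimates this Fourier series directly via Cauchy--Schwarz and Plancherel; you instead use only qualitative harmonic-function theory (uniform decay of $w$ plus interior derivative estimates on balls $B_{|x|/4}(x)\subset\mathcal{B}_R^\complement$), which is cleaner and would even generalize to non-circular inner boundaries. However, one step of yours needs repair: the decay $|w(x)|\lesssim R\,\|u_R^\a\|_{L^\infty(\pp\mathcal{B}_R)}\,|x|^{-1}$ cannot be obtained from ``the maximum principle'' in 2D, because $|x|^{-1}$ is not harmonic, nor superharmonic, in the plane ($\Delta(|x|^{-1})=|x|^{-3}>0$), so there is no admissible barrier of that form. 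The statement itself is true for zero-mean data, but proving it requires essentially the machinery you were trying to avoid: either the Fourier expansion on circles (zero mean kills $k=0$, and $\sum_{k\neq0}(R/r)^{|k|}\lesssim R/r$ uniformly for $r\ge\tfrac98R$, which holds on the balls you use since $R\le\tfrac23N$), or a Kelvin transform to the disk followed by a Schwarz-type lemma at the origin, where the zero-mean condition becomes $\tilde w(0)=0$ by the mean value property. With either fix, your derivation of \eqref{eq:decay_D2w} goes through with constants independent of $R$ and $N$.

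The more consequential issue is your last step, which you asserted without computing: integrating the pointwise bound does \emph{not} give \eqref{eq:norm_D2w} as stated, since
\begin{equation*}
\Big(\int_{|x|\ge N}|x|^{-6}\dx\Big)^{1/2}
 = \Big(2\pi\int_N^\infty r^{-5}\dr\Big)^{1/2}\lesssim N^{-2},
\end{equation*}
and $N^{-2}\gg N^{-5/2}$; your dyadic Caccioppoli alternative also yields $N^{-2}$. To be fair, this defect is inherited from the paper, whose own proof likewise declares \eqref{eq:norm_D2w} an ``immediate consequence'' of \eqref{eq:decay_D2w}. What the paper actually uses later, in the estimate \eqref{eq:estimate_A2} of the term $A_2$, is the trace norm on the boundary curve: since $\Gamma_h$ has length $\approx N$ and $|x|\approx N$ on $\Gamma_h$, the pointwise bound gives $\big(\int_{\Gamma_h}|x|^{-6}\,{\rm d}S\big)^{1/2}\lesssim(N\cdot N^{-6})^{1/2}=N^{-5/2}$. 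So \eqref{eq:norm_D2w} should be read with $L^2(\Gamma_h)$ in place of $L^2(\Omega_h^\complement)$; with that correction both your argument and the paper's close, but as written neither derivation produces the exponent $5/2$ over the two-dimensional exterior domain, and you should have caught the mismatch by evaluating the integral you reduced the claim to.
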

\begin{proof}
Since the auxiliary problem \eqref{eq:lapw} involves a circular
boundary $\pp \Omega_R^\complement$, we can exploit separation of variables and Fourier series to
estimate $\D^2 w$. We write $\tilu ^\a$ and $w$ in polar coordinates as
\begin{align}
\tilu ^\a (r, \theta) &=  \sum_{k\in\Z } \hat{a}_k(r) e^{ik\cdot \theta} ,\notag \\
w(r,\theta) &= \sum_{k\in\Z } \hat{W}_k(r) e^{ik\cdot \theta}. \label{eq:fouriersw}
\end{align}
The boundary condition $w = \tilu^\a$ on $\partial\mathcal{B}_R$ becomes
\begin{equation*}
w(R, \theta) = \tilu^\a(R,\theta), \quad \text{i.e.,} \qquad
 \hat{W}_k(R) = \hat{a}_k(R).
\end{equation*}

The Laplace operator in polar coordinates in 2D is given by
\begin{equation*}
-\Delta_{x,y} = -r^{-1}\pp _r(r^{-1} \pp _r ) -r^{-2}\pp ^2_\theta.
\end{equation*}
Substituting \eqref{eq:fouriersw} we obtain
\begin{equation*}
\sum_{k\in \Z} \pp^2_r \hat{W}_k + r^{-1} \pp_r\hat{W}_k - k^2 r^{-2} \hat{W}_k = 0.
\end{equation*}
Solving the resulting ODE for each $\hat{W}_k$ and taking into account the decay
 and boundary condition from \eqref{eq:lapw}, we deduce that
\begin{equation*}
w(r,\theta) = \sum_{k\in\Z } \hat{a}_k(R) \left(\frac{r}{R}\right)^{-|k|} e^{ik\cdot \theta}.
\end{equation*}

Using the fact that, for $p \in \mathbb{N}$ and $q \geq 1+\epsilon$,
\begin{equation*}
	\sum_{k\in\Z } |k|^p q^{-2|k|} \leq C_{p,\epsilon} q^2.
\end{equation*}

%
%
We can now estimate
\begin{align*}
\big|\D_r ^2 w(r, \theta)\big|
&= \bigg|r^{-2}\sum_{k\in\Z } |k|(|k|+1) \hat{a}_k(R) \left(\frac{r}{R}\right)^{-|k|} e^{ik\cdot \theta}\bigg| \\
& \le r^{-2} \left(\sum_{k\in\Z }  |\hat{a}_k(R)|^2\right)^{1/2} \left(\sum_{k\in\Z } |k|^4 \left (\frac{r}{R}\right)^{-2|k|} \right)^{1/2} \\
&\lesssim r^{-2} (r/R)^2 \bigg( \frac1R \int_{\partial \mathcal{B}_R} |\tilu^\a|^2 \bigg)^{1/2},
\end{align*}
where in the last line we also used Plancherel's Theorem. Using the fact that
$|\tilu^\a(x)| \lesssim |x|^{-1}$ we finally obtain
\begin{equation*}
	\big|\D_r^2 w(r, \theta)\big| \lesssim (r/R)^{-3}.
\end{equation*}
Analogous arguments for $\D_\theta^2 w$ and $\D_r\D_\theta w$ yield
\begin{equation*}
	\big|\D^2 w(r, \theta)\big| \lesssim (r/R)^{-3}.
\end{equation*}

The first result \eqref{eq:decay_D2w} follows from the assumption that
$R \leq \frac23 N$. The second result \eqref{eq:norm_D2w} is an immediate
consequence of \eqref{eq:decay_D2w}.
\end{proof}

\subsection{Consistency decomposition}
Given a solution $u^\a$ to \eqref{eq:y_a} and a discrete test function
$v_h$, let $\Pi_h u^\a$ be as defined in \S~\ref{sect:best_approx}, let
$v = \Pi_h^* v_h$ be defined by \eqref{def: varphi_intp}, and let
$w$ be given by \eqref{eq:lapw}. Moreover, let
\begin{equation*}
	\tilu^\a _h : = \tilu^\a - c_h \quad \text{such that} \quad
 	\tilu^\a _h|_{\Gamma_h} \in H^{1/2}_*(\Gamma_h),
\end{equation*}
then we decompose the consistency error into
 \begin{align}
	 \notag
	 \<\del \E_h ^{\rm tot }(\Pi_h u^\a), v_h\>
	 &=
 \<\del \E_h ^{\rm tot }(\Pi_h u^\a), v_h\> - \<\del \E ^\a (u^\a),v\>   \\
 \notag
 &= \<\del\E^{\rm ac}_h(\Pi_h u^\a ), v_h\>+ {\mu} \<  g_h^{-1} \Pi_h u^\a,v_h\>_{\Gamma_h} -  \<\del \E ^\a (u^\a),v\> \\
 \label{eq:consistency_decomposition}
 &= \underbrace{  \<\del\E^{\rm ac}_h(\Pi_h u ), v_h\> - \<\del \E_h^{\rm ac}(\tilu^\a ), v\>}_{\text{interpolation error}}\\
 \notag
 &\qquad + \underbrace{
 	\<\del \E ^{\rm ac} (\tilu^\a) - \del \E ^\a (\tilu^\a), v\> -  \int_{\Omega_h^\complement} (\pp W(\D \tilu^\a ) - \mu \D \tilu^\a )\cdot \D v}_{\text{modelling error}}\\
	\notag
 &\qquad + \underbrace{\mu\<g^{-1}_h \Pi_h u^\a, v_h\>_{\Gamma_h} -\mu \int_{\Omega_h^\complement} \D \tilu^\a \cdot \D v}_{\text{BEM error}}.
\end{align}

 \subsection{The interpolation error}
The first part of the consistency error, the interpolation error,
has already been estimated in \cite{2016-qcp2}.

\begin{lemma}\label{lem:int_err}
 	The interpolation error can be estimated by
 	\begin{equation}\label{eq:int_err}
 	\begin{aligned}
&\<\del\E^{\rm ac}_h(\Pi_h u^\a ), v_h\> - \<\del \E_h^{\rm ac}(\tilu^\a ), v\> \\
&\quad \le c(M_2\|h\D ^2 \tilu ^\a\|_{L^2(\Omega_h\setminus \Omega^\a )}+ M_2\|\D ^3\tilu^\a\|_{L^2(\R ^2\setminus \Omega^\a)} + M_3\|\D ^2 \tilu ^\a \|^2_{L^4(\R ^2\setminus \Omega^\a )})\|\D v_h\|_{L^2}.
 	\end{aligned}
 	 	\end{equation}
 	\end{lemma}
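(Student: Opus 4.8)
The plan is to import and reconstruct the interpolation-error analysis of \cite{2016-qcp2}, since the G23 finite element coupling energy $\E^{\rm ac}_h$ appearing here is the same one treated there; the only bookkeeping difference is that we must carry the discrete test function $v_h$ and its continuous counterpart $v = \Pi_h^* v_h$ separately. I would process the difference of first variations region by region, over the atomistic sites $\As$, the interface sites $\Is$, and the continuum integral over $\Omega^\c_h$, and show that each surviving contribution reproduces one of the three terms on the right-hand side.

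The first step is to localise the error away from the fully atomistic region. Since $\Pi_h u^\a$ and $\tilu^\a$ share the nodal values of $u^\a$ up to a single global constant, their finite differences coincide, $D_j\Pi_h u^\a(\ell) = D_j\tilu^\a(\ell) = D_j u^\a(\ell)$, so the site potentials $V$ and the reconstructed potentials $V^\i_\ell$ take identical arguments at every $\ell$. Moreover, by assumption {\bf (A3)} and the construction \eqref{def: varphi_intp}, $v = v_h$ throughout $\Omega^\a\cup\Omega^\i$ and on the adjacent continuum layer $\Is^+$, so that $D_j(v-v_h)(\ell) = 0$ whenever $\ell\in\As\cup\Is$. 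Consequently the atomistic and interface sums cancel exactly in the difference, and the whole error is localised to the continuum; this is why every norm on the right-hand side lives in $\R^2\setminus\Omega^\a$ (or, for the finite element term, in $\Omega_h\setminus\Omega^\a$).

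It then remains to estimate the continuum contribution, which I would split as $\langle\delta\E^{\rm ac}_h(\Pi_h u^\a)-\delta\E^{\rm ac}_h(\tilu^\a),v_h\rangle + \langle\delta\E^{\rm ac}_h(\tilu^\a),v_h-v\rangle$. For the first piece, $\D\Pi_h u^\a$ is the piecewise-constant gradient of the P1 nodal interpolant of $\tilu^\a$ on the shape-regular mesh $\T_h$, so standard first-order interpolation gives $\|\D\Pi_h u^\a - \D\tilu^\a\|_{L^2(T)}\lesssim \diam(T)\,\|\D^2\tilu^\a\|_{L^2(T)}$; combining this with the Lipschitz bound \eqref{eq:lip_V} for $\pp_\mF W$ (constant $M_2$) produces the $M_2\|h\D^2\tilu^\a\|_{L^2(\Omega_h\setminus\Omega^\a)}$ term. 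For the second piece I would expand the stress $\pp_\mF W(\D\tilu^\a)$ and use the force consistency \eqref{fcons} (the patch test) to annihilate the affine part exactly; the residual is then the Cauchy--Born-type consistency defect of the G23 reconstruction, whose linear-in-gradient part is controlled by $\D^3\tilu^\a$ and whose quadratic remainder is controlled by $|\D^2\tilu^\a|^2$. Invoking the Lipschitz bounds \eqref{eq:lip_V} yields the $M_2\|\D^3\tilu^\a\|_{L^2}$ term from the first partials and, after a H\"older split of the cubic factor into one $L^2$ and two $L^4$ factors, the $M_3\|\D^2\tilu^\a\|^2_{L^4}$ term from the second partials. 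Finally I would apply Cauchy--Schwarz and factor out $\|\D v\|_{L^2}\lesssim\|\D v_h\|_{L^2}$, which is exactly the Carstensen stability estimate \cite[Theorem 3.1]{carstensen} recorded in \S\ref{sec: test_fun_v}.

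The main obstacle is the interface analysis: obtaining the sharp $\D^3$ scaling (rather than merely $\D^2$) depends on using patch-test consistency to remove the affine contribution exactly, and on the precise matching $v = v_h$ on $\Is\cup\Is^+$ so that no spurious interface term is generated; the quadratic remainder must also be treated with the correct H\"older exponents so that it appears as $\|\D^2\tilu^\a\|^2_{L^4}$ and not a weaker norm. All of these estimates are carried out in \cite{2016-qcp2}, and the proof here reduces to checking that the additional BEM structure does not interfere with that argument, which it does not, because the boundary term of $\E^{\rm tot}_h$ is isolated in the separate BEM-error part of the decomposition \eqref{eq:consistency_decomposition}.
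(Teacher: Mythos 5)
Your overall skeleton---localising the error to the continuum region, splitting it into $\<\del\E^{\rm ac}_h(\Pi_h u^\a)-\del\E^{\rm ac}_h(\tilu^\a),v_h\>$ plus $\<\del\E^{\rm ac}_h(\tilu^\a),v_h-v\>$, and handling the first piece by a P1 interpolation estimate together with the Lipschitz/boundedness properties of the potential---matches the paper. The gap is in the second piece. You propose to bound $\<\del\E^{\rm ac}_h(\tilu^\a),v_h-v\>$ by expanding the stress and using force consistency \eqref{fcons} to ``annihilate the affine part'', identifying the residual as the ``Cauchy--Born-type consistency defect of the G23 reconstruction''. That mechanism belongs to the \emph{modelling} error $\<\del\E^{\rm ac}(\tilu^\a)-\del\E^\a(\tilu^\a),v\>$, which is a separate term of the decomposition \eqref{eq:consistency_decomposition} and is estimated by quoting \cite{PRE-ac.2dcorners}; it has no role here, because the present term tests a \emph{single} functional against a difference of test functions rather than comparing two functionals. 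Concretely, since $v_h-v$ vanishes on $\pp\Omega^\c_h$, invoking the patch test for this term reduces to the divergence theorem for a constant stress: one gets $\int_{\Omega^\c_h}\big(\pp_\mF W(\D\tilu^\a)-\sigma_0\big)\cdot\D(v_h-v)\dx$, which is bounded by $\|\D\tilu^\a\|_{L^2(\Omega^\c_h)}\|\D v_h\|_{L^2}\sim K^{-1}\|\D v_h\|_{L^2}$; even a patchwise affine subtraction gains only one order, giving $\|\D^2\tilu^\a\|_{L^2}\sim K^{-2}$. Neither reaches the claimed $M_2\|\D^3\tilu^\a\|_{L^2}+M_3\|\D^2\tilu^\a\|^2_{L^4}$ scaling, so as written your argument would fail at exactly the point you flag as the main obstacle.

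The mechanism the paper (and the argument it imports from \cite{2016-qcp2}) actually uses is different: because $v-v_h=0$ on $\Gamma_h\cup\Omega^\a\cup\Omega^\i$ (this is where {\bf (A3)} enters), one may integrate by parts in $\Omega^\c_h$ with no boundary contributions, writing the term as $\int_{\Omega^\c_h}Q\,(v_h-v)\dx$ with $Q:=-{\rm div}\,[\pp_\mF W(\D\tilu^\a)]$; then the \emph{dual} approximation property of the Carstensen construction (\cite[Theorem 3.1]{carstensen}, applied with $v_2$ the weighted quasi-interpolant of $v_h-v_1$) allows the subtraction of the local means $\<Q\>_\ell$ on each patch $w_\ell$, and a Poincar\'e inequality converts $|Q-\<Q\>_\ell|$ into $d_\ell\|\D Q\|_{L^2(w_\ell)}$, where finally $\|\D Q\|_{L^2}\lesssim M_2\|\D^3\tilu^\a\|_{L^2}+M_3\|\D^2\tilu^\a\|^2_{L^4}$ by \eqref{def:M_2}--\eqref{def:M_3}. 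The two extra orders thus come from integration by parts plus mean subtraction, not from patch-test consistency. You cite Carstensen only for the $H^1$-stability $\|\D v\|_{L^2}\lesssim\|\D v_h\|_{L^2}$, which is far too weak to produce the result; the mean-subtraction (quasi-orthogonality) property of the interpolant is the essential missing ingredient in your proposal.
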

 \begin{proof}
 	We  split the interpolation error into
 	\begin{align*}
 	\<\del\E^{\rm ac}_h(\Pi_h u^\a ), v_h\> - \<\del \E_h^{\rm ac}(\tilu^\a ), v\>
	= \<\del\E^{\rm ac}_h(\Pi_h u^\a) - \del\E^{\rm ac}_h(\tilu^\a ), v_h\>
	- \<\del \E_h^{\rm ac}(\tilu^\a ), v-v_h\>.
 	\end{align*}
 	The first term can be bounded by a standard interpolation error estimate
	and the uniform boundedness of $\del^2\E^{\rm ac}_h$,
 	\begin{align*}
 	\<\del\E^{\rm ac}_h(\Pi_h u^\a) - \del\E^{\rm ac}_h(\tilu^\a ), v_h\> &\le \<\del^2\E^{\rm ac}_h(\theta  )(\D \Pi_h u^\a - \D \tilu^\a), v_h\> \\
 	&\le c M_2 \|h\D ^2 \tilu ^\a\|_{L^2(\Omega^\c_h )}\|\D v_h\|_{L^2(\Omega^\c _h)}
 	\end{align*}

 	The bound for the second term follows from the exactly same argument as in the proof of \cite[Theorem 3.2]{2016-qcp2}. Since the interpolant $v$ defined in \eqref{def: varphi_intp} has property
 	\begin{equation*}
 	v(x) - v_h(x) = 0 , \quad  \forall x \in \Gamma_h \cup \Omega^\a \cup \Omega^\i
 	\end{equation*}
we can integrate by part in $\Omega_h^\c$ without obtaining boundary contributions. Let $Q : = -{\rm div}\, [\pp _{\mF}W(\D \tilde{u}^{\rm a})]$, then
\begin{equation*}
\<\del \E_h^{\rm ac}(\tilu^\a ), v-v_h\>
= \int_{\Omega_h^\c} Q \cdot (v_h-v) \dx = \int_{\Omega_h^\c} Q \cdot \left((v_h-v_1)- v_2\right) \dx.
\end{equation*}
Since $v_2$ is a piecewise-linear quasi-interpolant of $v_h - v_1$ as defined in \cite{carstensen}, a direct consequence of Theorem 3.1 in \cite{carstensen} is that there exists $C>0$ such that,
\begin{equation*}
	\<\del\E^{\rm ac}_h(\Pi_h u^\a) - \del\E^{\rm ac}_h(\tilu^\a ), v_h\> \le C\|\D (v_h-v_1) \|_{L^2(\Omega^\c _h)}\left( \sum_{\ell \in \Cs \cap \Omega^\c_h} d_\ell ^2 \int_{w_\ell} \phi^{\rm PU}_\ell |Q - \<Q\>_\ell|^2 \dx\right)^{1/2},
\end{equation*}
where $w_\ell : = \supp (\phi_\ell)$,
$\<Q \>_\ell := 1/|w_\ell| \int_{w_\ell} Q(x)\dx$ and
$d_\ell :={\rm diam}(w_\ell) = 1$. With the sharp Poincar\'{e} constant derived
in \cite{Acosta2003}, we obtain
\begin{equation*}
\int_{w_\ell } \phi^{\rm PU}_\ell |Q - \<Q\>_\ell|^2 \dx \le \int_{w_\ell} |Q-\<Q\>_\ell|^2 \dx\le \tfrac{1}{4} d^2_\ell \|\D Q\|^2_{L^2(w_\ell)}.
\end{equation*}
On the other hand, $v_1$ is a standard quasi-interpolant of $v_h$ in $ \bigcup{\mathcal{T}_h^c}$, which implies that there exists $C' >0$ such that
\begin{equation*}
\|\D (v_h - v_1)\|_{L^2(\Omega^\c _h)}  \le C' \|\D v_h\|_{L^2(\Omega^\c _h)}.
\end{equation*}

Due to the fact that $d_\ell = 1$ and that each point in
$\R^2 \setminus \Omega^\a$ is covered by at most three $w_\ell$, we have
\begin{align*}
	&\<\del\E^{\rm ac}_h(\Pi_h u^\a) - \del\E^{\rm ac}_h(\tilu^\a ), v_h\>\\ \le &C \max_\ell d_\ell^2 \|\D Q\|_{L^2(\Omega^\c _h)}\|\D v_h\|_{L^2(\Omega^\c _h)}  \\
\le &C\left(M_2\|\D^3 \tilde{u}^\a\|_{L^2(\Omega^\c _h)}+M_3\|\D^2 \tilde{u}^\a \|^2_{L^4(\Omega^\c _h)}\right)\|\D v_h\|_{L^2(\Omega^\c _h)},
\end{align*}
where we used the following estimate, for some $c>0$,
\begin{equation*}
\begin{aligned}
\|\D Q  \|_{L^2(\Omega_h)} &= \|\D {\rm div} [\pp _\mF W (\D \tilde{u}^\a)]\|_{L^2(\Omega^\c _h)}  \\
&= \|\D \left(\pp_\mF ^2 W (\D \tilde{u}^\a) \D ^2 \tilde{u}^\a \right)\|_{L^2(\Omega^\c _h)} \\
&= \left\| \pp_\mF ^2 W (\D \tilde{u}^\a) \D ^3 \tilde{u}^\a +  \pp_\mF ^3 W (\D \tilde{u}^\a) \left(\D ^2 \tilde{u}^\a \right)^2\right\|_{L^2(\Omega^\c _h)}  \\
&\le c \left(M_2\|\D^3 \tilde{u}^\a\|_{L^2(\Omega^\c _h)}+M_3\|\D^2 \tilde{u}^\a \|^2_{L^4(\Omega^\c _h)}\right),
\end{aligned}
\end{equation*}
employing the global bounds \eqref{def:M_2} and \eqref{def:M_3}.
\end{proof}

 \subsection{The modelling error}
 In this section we rely on the following theorem from \cite{PRE-ac.2dcorners} of the pure modelling error estimate of G23 coupling method.

 \begin{theorem}[G23 modeling error]
 	For any $v\in \Uss$ we have the G23 consistency error
 	\begin{equation}
 	\begin{aligned}\label{thm:g23_consist_pure}
 	&\<\del \E ^{\rm ac} (u^\a ),v\> -\<\del \E ^\a (u^\a ), v\> \\
 	&\quad \le c\left(M_2\|\D^2\tilu ^\a \|_{L^2(\Omega^{\rm i})} + M_2\|\D^3 \tilu ^\a \|_{L^2(\Omega^\c)}+M_3\|\D ^2\tilu ^\a\|^2_{L^4(\Omega^\c)}\right)\|\D v\|_{L^2(\R ^2)}
 	\end{aligned}
 	\end{equation}
 \end{theorem}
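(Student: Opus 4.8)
The plan is to exploit the patch-test consistency of the G23 scheme. Since the infinite lattice is translation invariant, $\<\del \E^\a(u_\mF), \cdot\> = 0$ for every affine displacement $u_\mF$, and by the force-consistency condition \eqref{fcons} also $\del \E^{\rm ac}(u_\mF) = 0$; consequently the leading-order contribution to $\<\del \E^{\rm ac}(u^\a) - \del \E^\a(u^\a), v\>$ cancels and the residual is governed by the curvature of $\tilu^\a$. First I would write out both first variations and cancel the common atomistic term over $\As$. Because $\E^\a$ sums $V(Du^\a(\ell))$ over all of $\L$ while $\E^{\rm ac}$ replaces the contributions of $\Is \cup \Cs$ by the reconstructed interface energies $V^\i_\ell$ and the Cauchy--Born integral, the residual splits into an \emph{interface part} supported on $\Is$ and a \emph{continuum part} comparing $\int_{\Omega^\c} \pp_\mF W(\D u^\a)\cdot \D v$ with the corresponding lattice sum $\sum_{\ell \in \Cs}\sum_j \pp_j V(Du^\a(\ell))\cdot D_j v(\ell)$.

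For the interface part I would Taylor-expand each term about the affine state $\D\tilu^\a(\ell)\bfa$ at the site $\ell$. Local energy consistency \eqref{econs} and the exactness of $\RO_\ell$ on affine displacements give that the first variations of $V^\i_\ell$ and of $V$ coincide at this affine state, so after subtracting it the leading term is $\sum_j [\pp_j V(Du^\a(\ell)) - \pp_j V(\D\tilu^\a(\ell)\bfa)]$ paired against $\RO_\ell D_j v$ and $D_j v$, which is $O(M_2 |D^2 u^\a(\ell)|)$ by the Lipschitz bound \eqref{eq:lip_V}. A discrete summation by parts within the one-atom-thick interface layer keeps the test function at first-derivative level (this is exactly what force consistency buys us), and applying Cauchy--Schwarz together with Lemma~\ref{lem:smooth_int} converts the sum into $M_2 \|\D^2\tilu^\a\|_{L^2(\Omega^\i)}\|\D v\|_{L^2}$, the first term.

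The continuum part is the classical Cauchy--Born modelling error, and it is the step I expect to be the main obstacle. The idea is to Taylor-expand $\pp_j V(Du^\a(\ell))$ about $\pp_j V(\D\tilu^\a(\ell)\bfa)$. The second-order Taylor remainder is $O(M_3 |D^2 u^\a(\ell)|^2)$ and produces the term $M_3 \|\D^2\tilu^\a\|^2_{L^4(\Omega^\c)}$ directly via H\"older's inequality. The delicate piece is the \emph{linear} term $\sum_k \pp_{j,k} V(\D\tilu^\a(\ell)\bfa)[D_k u^\a(\ell) - \D\tilu^\a(\ell)a_k]$: na\"ively this is only $O(|\D^2 u^\a|)$, but comparing the lattice sum to the continuum integral and exploiting the point symmetry $V((-g_{j+3})_j) = V(\bfg)$ together with $a_{j+3} = -a_j$ and $\sum_j a_j = 0$ makes the second-derivative contribution cancel, leaving a genuine third-derivative error $O(M_2 |\D^3 u^\a|)$. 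Concretely I would expand each finite difference as $D_j u^\a(\ell) = \D\tilu^\a(\ell)a_j + \tfrac12 \D^2\tilu^\a(\ell)[a_j,a_j] + \cdots$, perform a summation by parts to match the midpoint quadrature of the continuum integral, and verify that the symmetric (even under $j \mapsto j+3$) pieces telescope while the antisymmetric pieces cancel pairwise. The boundary terms generated along $\pp\Omega^\c$ must be shown to be absorbed into the interface layer; the structural assumption \textbf{(A0)} on $\Is$ is what guarantees this matching.

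Finally I would collect the three estimates, using Lemma~\ref{lem:smooth_int} to replace each $\ell^2$- or $\ell^4$-sum of $|D^j u^\a(\ell)|$ by the corresponding $L^2$ or $L^4$ norm of $\D^j\tilu^\a$ over $\Omega^\i$ or $\Omega^\c$, and factoring out $\|\D v\|_{L^2(\R^2)}$ by a single Cauchy--Schwarz (for the $\D^2$ and $\D^3$ terms) and H\"older (for the quadratic $L^4$ term). The resulting constant $c$ depends only on the lattice geometry and shape-regularity, and in particular not on $u^\a$ or $v$.
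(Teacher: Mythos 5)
You cannot really be compared against an internal argument here, because the paper offers none: this theorem is imported verbatim from \cite{PRE-ac.2dcorners} (see the sentence introducing it), so the relevant benchmark is the proof in that reference. Your sketch reproduces its broad architecture --- cancellation of the affine leading order via patch-test consistency, a first-order error confined to the one-atom-thick interface layer, second-order Cauchy--Born consistency in the continuum region via point symmetry, and Lemma~\ref{lem:smooth_int} to convert lattice sums into norms of $\tilu^\a$ --- and the continuum half of your argument (Taylor expansion, the $M_3\|\D^2\tilu^\a\|_{L^4}^2$ remainder, the point-symmetry cancellation of the second-derivative terms) is sound in outline.

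The genuine gap is in the interface part. The claim that ``the first variations of $V^\i_\ell$ and of $V$ coincide at the affine state'' is false. By the chain rule and exactness of $\RO_\ell$ on affine maps,
\begin{equation*}
\pp_k V^\i_\ell(\mF\bfa) \;=\; \sum_{j=1}^6 C_{\ell,j,k}\,\pp_j V(\mF\bfa),
\end{equation*}
which for the G23 coefficients equals $\tfrac13\pp_{k-1}V(\mF\bfa)+\tfrac23\pp_{k}V(\mF\bfa)+\tfrac13\pp_{k+1}V(\mF\bfa)$; for a general point-symmetric $V$ this differs from $\pp_k V(\mF\bfa)$ (equality would require $\pp_{k-1}V+\pp_{k+1}V=\pp_kV$ at affine states, which holds when the stress is linear in the bond directions but not for generic nonlinear $V$). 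What energy consistency \eqref{econs} actually yields, upon differentiating the identity $V^\i_\ell(\mF\bfa)=V(\mF\bfa)$ in $\mF$, is only the weaker statement $\sum_j \pp_j V^\i_\ell(\mF\bfa)\otimes a_j = \sum_j \pp_j V(\mF\bfa)\otimes a_j$, i.e.\ agreement of the first variations against \emph{affine} test functions only. Consequently, after subtracting the affine state, your residual still contains, per interface site, terms of size $O(1)\cdot|Dv(\ell)|$ that do not vanish site-wise; they cancel only when summed over all of $\Is$ \emph{together with} the adjacent atomistic and Cauchy--Born contributions, which is precisely the content of the global force-consistency condition \eqref{fcons}. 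Turning that global cancellation into a localized, quantitative estimate --- with $\mF = \D\tilu^\a(\ell)$ varying from site to site, and at interface corners, cf.\ \textbf{(A0)} --- is the actual difficulty, and the proof in \cite{PRE-ac.2dcorners} handles it by rewriting both first variations in divergence (stress) form, $\int_{\R^2}\mS(x)\!:\!\D v\dx$, and estimating the stress difference, with patch-test consistency guaranteeing the stresses agree at affine states. As written, your argument breaks at the site-wise cancellation step, and the summation by parts you invoke afterwards cannot repair it, since the leftover affine-state terms are not organised in a form to which it applies.
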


 Furthermore, the second term of the modelling error can be estimated as follows.

 \begin{lemma}
	For any $v\in \Uss$ we have
 	\begin{equation*}
 	\int_{\Omega_h^\complement} (\pp W(\D \tilu ) - \mu \D \tilu )\cdot \D v \lesssim \|\D \tilu ^\a \|^2_{L^4(\Omega_h^\complement)}\|\D v\|_{L^2(\Omega_h^\complement)}.
 	\end{equation*}
 \end{lemma}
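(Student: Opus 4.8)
The plan is to reduce the claim to a pointwise Taylor estimate for the integrand followed by a single application of H\"older's inequality, so that no structure of the test function $v$ beyond $v\in\Uss$ is used. Writing $W(\mF)=\Omega_0^{-1}V(\mF\bfa)$, recall that $W(0)=0$, that the reference lattice is a stress-free equilibrium so that $\pp_\mF W(0)=0$, and that $\pp^2_\mF W(0)=\mu I$ as recorded in \S\ref{sec:g23_coupling}. A second-order Taylor expansion of the stress $\pp_\mF W$ about $\mF=0$ then gives, for every $\mF\in\R^{m\times2}$,
\[
  \pp_\mF W(\mF)-\mu\mF
  = \pp_\mF W(\mF)-\pp_\mF W(0)-\pp^2_\mF W(0)\mF
  = \int_0^1 (1-t)\,\pp^3_\mF W(t\mF)[\mF,\mF]\,\dt .
\]

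Next I would control the remainder uniformly in $\mF$. Since $\pp^3_\mF W(\mF)=\Omega_0^{-1}\nabla^3 V(\mF\bfa)[\bfa,\bfa,\bfa]$ is a fixed linear image of $\nabla^3 V$, the standing hypothesis $\nabla^3 V\in L^\infty(\R^{m\times6})$ yields a constant $C$, independent of $\mF$, with $|\pp_\mF W(\mF)-\mu\mF|\le C|\mF|^2$. Applying this with $\mF=\D\tilu^\a(x)$ and integrating,
\[
  \int_{\Omega_h^\complement}\big(\pp W(\D\tilu^\a)-\mu\D\tilu^\a\big)\cdot\D v
  \le C\int_{\Omega_h^\complement}|\D\tilu^\a|^2\,|\D v| ,
\]
and the claimed bound is then exactly H\"older's inequality with exponents $(4,4,2)$,
\[
  \int_{\Omega_h^\complement}|\D\tilu^\a|^2\,|\D v|
  \le \|\D\tilu^\a\|^2_{L^4(\Omega_h^\complement)}\,\|\D v\|_{L^2(\Omega_h^\complement)} ,
\]
where both factors are finite: the first by the decay $|\D\tilu^\a(x)|\lesssim|x|^{-2}$ from Corollary~\ref{thm:decay}, and the second because $v\in\Uss$.

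The only genuinely delicate point, and the one I would flag, is the vanishing of the zeroth-order (constant-stress) contribution. Had $\pp_\mF W(0)$ been nonzero, the surviving term $\pp_\mF W(0):\int_{\Omega_h^\complement}\D v$ could not be absorbed into the stated right-hand side, and indeed need not even converge absolutely over the unbounded exterior domain, since $\D v$ is only controlled in $L^2(\R^2)$. Thus the essential input is the stress-free reference condition $\pp_\mF W(0)=0$; once that is in hand the estimate is a routine Taylor-plus-H\"older computation, and in particular it does \emph{not} require the special construction of $v=\Pi_h^\ast v_h$ from \S\ref{sec: test_fun_v} (which vanishes on $\Gamma_h$ and in $\Omega^\a\cup\Omega^\i$), so the statement holds for arbitrary $v\in\Uss$ as asserted.
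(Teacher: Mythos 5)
Your proof is correct and follows essentially the same route as the paper's: a Taylor expansion of $\pp_\mF W$ about $0$ using $\pp_\mF W(0)=0$ and $\pp^2_\mF W(0)=\mu I$, a uniform bound on the quadratic remainder via the global bound on the third derivatives of $V$ (the paper's constant $M_3$), and H\"older's inequality with exponents $(4,4,2)$. The only cosmetic difference is that you use the integral form of the Taylor remainder where the paper uses the Lagrange form $\tfrac12 \pp^3 W(\theta)(\D \tilu^\a)^2$; your explicit remark that the argument hinges on the stress-free condition $\pp_\mF W(0)=0$ matches the fact the paper invokes at the end of its proof.
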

 \begin{proof}
 	This is a direct result from applying Taylor expansion,
 	\begin{align*}
 	\int_{\Omega_h^\complement} (\pp W(\D \tilu ) - \mu \D \tilu )\cdot \D v  &= \int_{\Omega_h^\complement}\left[\pp W(0) + \pp ^2 W(0) \D \tilu ^\a + \frac{1}{2}\pp ^3 W(\theta )(\D \tilu^\a)^2 \right]\D v\\
 	& \quad - \int_{\Omega_h^\complement}  \mu \D \tilu ^\a \D v\\
 	& \le \frac{M_3}{2} \int_{\Omega_h^\complement}  (\D \tilu ^\a)^2 \D v\\
 	&\lesssim \|\D \tilu ^\a \|^2_{L^4(\Omega_h^\complement)}\|\D v\|_{L^2(\Omega_h^\complement)},
 	\end{align*}
 	where we use the fact that $\pp W (0) = 0$ and that $\pp^2 W (0)  = \mu I$.
 \end{proof}
\smallskip

Therefore the modelling error can be estimated by
\begin{equation}\label{eq:modelling_err}
\begin{aligned}
	&\<\del \E ^{\rm ac} (\tilu^\a) - \del \E ^\a (\tilu^\a), v\> -  \int_{\Omega_h^\complement} (\pp W(\D \tilu^\a ) - \mu \D \tilu^\a )\cdot \D v\\
	\lesssim & \left(\|\D^2\tilu ^\a \|_{L^2(\Omega^{\rm i})} + \|\D^3 \tilu ^\a \|_{L^2(\R ^2 \setminus \Omega^\a)}+\|\D ^2\tilu ^\a\|^2_{L^4(\R ^2 \setminus \Omega^\a)}\right)\|\D v\|_{\R ^2} \\
	\lesssim & \left(\|\D^2\tilu ^\a \|_{L^2(\Omega^{\rm i})} + \|\D^3 \tilu ^\a \|_{L^2(\R ^2 \setminus \Omega^\a)}+\|\D ^2\tilu ^\a\|^2_{L^4(\R ^2 \setminus \Omega^\a)}\right) \| v_h\|_{E},
\end{aligned}
\end{equation}
where we used the face that $\|\D v\|_{L^2(\R ^2)} \lesssim \|v_h\|_E$ from \eqref{eq: v-v_h^E -v_h}.

\subsection{The BEM error}
To complete the analysis of our numerical scheme it remains to estimate
the BEM error contribution to the consistency error \eqref{eq:consistency_decomposition}. Recall that we need to estimate
\[
	\< g^{-1}_h \Pi_h u^\a, v_h\>  -\int_{\Omega_h^\complement} \D \tilu^\a \cdot \D v,
\]
where $v$ is the interpolant defined in Section \ref{sec: test_fun_v}. Recall that $v_h^E$ solves the exterior Laplace problem \eqref{eq:lapv_h}. Then we have
\begin{equation*}
\int_{\Omega_h^\complement} \D \tilu^\a \cdot \D v_h^E = \< g^{-1} \tilu^\a _h, v_h\>_{\Gamma_h}.
\end{equation*}
Then the BEM error can be decomposed into
\begin{equation*}
	\begin{aligned}
	\< g^{-1}_h \Pi_h u^\a, v_h\>  -\int_{\Omega_h^\complement} \D \tilu^\a \cdot \D v & =\< g^{-1}_h \Pi_h u^\a, v_h\> - \< g^{-1} \tilu^\a_h, v_h\> + \int_{\Omega_h^\complement} \D \tilu^\a \cdot(\D v_h^E -  \D v )\\
	&=  \<g_h^{-1}(\Pi_h u^\a  - \tilu^\a_h ),v_h\> +  \<(g_h^{-1} - g^{-1})w, v_h\> \\
	&\quad + \<g^{-1}(w- \tilu ^\a_h ), v_h\> + \<g_h^{-1}(\tilu^\a_h  - w) ,v_h\>\\
	&\quad  + \int_{\Omega_h^\complement} \D \tilu^\a \cdot(\D v_h^E -  \D v )\\
	& =: A_1+A_2+A_3+A_4+A_5,
	\end{aligned}
	\end{equation*}
where we use the fact that $v_h = v_h^E $ on $\Gamma_h$.

We will employ stability of $g_h^{-1}$ and $g^{-1}$, as stated in Theorems \ref{thm:bdry_bdd} and \ref{thm: ellip}. In addition, the estimate of $A_1$ relies on best approximation error bounds; $A_2$ is the standard BEM approximation error;   $A_3$ and $A_4$ require the results on the auxiliary function $w$ that we established in \S~\ref{sec:def_w}; while estimating $A_5$ is analogous of the proof of Lemma \ref{lem:int_err}.

\subsubsection{Estimate of $A_1$}

In this section we first discuss the best approximation error $\|\D \Pi_h u^\a - \D \tilu^\a\|_{L^2(\Gamma_h)}$. We will exploit the theorems below, which are well established in the literature.

\begin{theorem}[Interpolation]
	Recall that the rescaled norms $H^{1/2}_{\Gamma_h}$ and $H^1_{\Gamma_h}$ are defined in \eqref{def:rescale_norm} and \eqref{def:rescale_norm_h1}, respectively. Let $u\in H^1_{\Gamma_h}$ then we have
	\begin{equation*}
	\|u\|_{H^{1/2}_{\Gamma_h}} \le \|u\|^{1/2}_{H^{1}_{\Gamma_h}}\|u\|^{1/2}_{H^0_{\Gamma_h}}.
	\end{equation*}
\end{theorem}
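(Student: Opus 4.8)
The plan is to strip out the $R$-dependence by rescaling and then reduce to a classical interpolation inequality on a fixed reference boundary. Recall from \S\ref{sect:rescale} that the rescaled norms $\|\cdot\|_{H^{1/2}_{\Gamma_h}}$ in \eqref{def:rescale_norm} and $\|\cdot\|_{H^1_{\Gamma_h}}$ in \eqref{def:rescale_norm_h1} were constructed precisely so that, writing $R = \tfrac12\diam(\Gamma_h)$, $\Gamma_1 := R^{-1}\Gamma_h$, and $u_R(x) := u_1(x/R)$, one has $\|u_R\|_{H^{1/2}_{\Gamma_R}} = \|u_1\|_{H^{1/2}(\Gamma_1)}$ and $\|u_R\|_{H^1_{\Gamma_R}} = \|u_1\|_{H^1(\Gamma_1)}$. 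The same computation shows that the $L^2$ (i.e.\ $H^0$) rescaled norm satisfies $\|u_R\|_{H^0_{\Gamma_R}} = \|u_1\|_{L^2(\Gamma_1)}$. First I would verify that all three factors in the claimed inequality are therefore invariant under this rescaling, so that it suffices to establish
\[
\|u_1\|_{H^{1/2}(\Gamma_1)} \le \|u_1\|_{H^1(\Gamma_1)}^{1/2}\,\|u_1\|_{L^2(\Gamma_1)}^{1/2}
\]
on the fixed curve $\Gamma_1$ of unit size; any constant produced on $\Gamma_1$ is then automatically independent of $\Omega_h$, which is exactly what the error analysis needs.

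On $\Gamma_1$ this is the standard Sobolev interpolation inequality at $\theta=\tfrac12$ between $L^2(\Gamma_1)$ and $H^1(\Gamma_1)$. The cleanest concrete route is spectral: parametrise the closed curve $\Gamma_1$ by arc length and expand $u_1$ in the associated Fourier basis, with eigenvalues $\lambda_k$ of the (shifted) tangential Laplacian, so that $\|u_1\|_{H^s(\Gamma_1)}^2 \simeq \sum_k (1+\lambda_k)^s|\hat u_{1,k}|^2$. A single application of Cauchy--Schwarz,
\[
\sum_k (1+\lambda_k)^{1/2}|\hat u_{1,k}|^2 \le \Big(\sum_k (1+\lambda_k)|\hat u_{1,k}|^2\Big)^{1/2}\Big(\sum_k |\hat u_{1,k}|^2\Big)^{1/2},
\]
then yields the inequality with constant one in these spectral norms; this is the spectral analogue of the Fourier argument already used in Lemma~\ref{th:decay_D2w}. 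Abstractly the same statement follows from $H^{1/2}(\Gamma_1) = [L^2(\Gamma_1),H^1(\Gamma_1)]_{1/2}$ together with the log-convexity inequality $\|u\|_{[X_0,X_1]_\theta} \le \|u\|_{X_0}^{1-\theta}\|u\|_{X_1}^{\theta}$ for the complex interpolation functor.

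The main obstacle is reconciling the two ways of measuring the $H^{1/2}$ norm. The paper defines $|\cdot|_{H^{1/2}}$ via the Slobodeckij seminorm, for which $|u_1|_{H^{1/2}}^2 \simeq \sum_k |k|\,|\hat u_{1,k}|^2$ only up to equivalence constants depending on $\Gamma_1$, whereas the constant-one Cauchy--Schwarz line above is phrased in the spectral (equivalently, complex-interpolation) norm; with the Slobodeckij weights one instead obtains the factor $\sup_k (1+|k|)/(1+k^2)^{1/2} = \sqrt2$ rather than $1$. Consequently the inequality should be read up to such an equivalence constant, and I would state it this way, emphasising only that the constant lives entirely on the fixed boundary $\Gamma_1$ and is thus independent of $R$. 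If the sharp constant one is genuinely required, one takes the spectral norms as the definition of the rescaled $H^s_{\Gamma_h}$ norms throughout, after which the displayed Cauchy--Schwarz estimate is already a complete proof.
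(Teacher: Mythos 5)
Your proposal is correct and follows essentially the same route as the paper: both use the scale-invariance built into the rescaled norms \eqref{def:rescale_norm} and \eqref{def:rescale_norm_h1} to reduce to the unit-size boundary $\Gamma_1$, where the paper simply cites the standard interpolation theorem (Theorem 2.18 in \cite{Olaf:BEM}) that you instead prove via the spectral/complex-interpolation argument. Your caveat about Slobodeckij versus spectral norms is a fair refinement rather than a gap: with the paper's Slobodeckij definition the constant-one form holds only up to an equivalence constant on $\Gamma_1$, but that constant is independent of $R = \tfrac12\diam(\Gamma_h)$, which is all the subsequent error analysis (e.g.\ the estimate \eqref{eq:best_approx_err_H1/2}) actually requires.
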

\vspace{-5mm}
\begin{proof} Let $u_1(x) := u(Rx)$ and $\Gamma_1$ be the image of mapping $\Gamma_h\ni x\mapsto \tfrac{x}{R}$.  Then, by definitions \eqref{def:rescale_norm} and \eqref{def:rescale_norm_h1}, we have
	\begin{align*}
	\|u\|_{H^{1/2}_{\Gamma_h}} & = \|u_1\|_{H^{1/2}(\Gamma_1)},\\
	\|u\|_{H^{1}_{\Gamma_h}} & = \|u_1\|_{H^{1}(\Gamma_1)}, \quad \text{and}\\
	\|u\|_{H^{0}_{\Gamma_h}} & = \|u_1\|_{H^{0}(\Gamma_1)}
	\end{align*}

	The standard interpolation theorem (see, for example, Theorem 2.18 in \cite{Olaf:BEM}) states that
		\begin{equation*}
	\|u_1\|_{H^{1/2}(\Gamma_1)} \le \|u_1\|^{1/2}_{H^{1}(\Gamma_1)}\|u_1\|^{1/2}_{H^0(\Gamma_1)}.
	\end{equation*}
	Hence the result follows.
\end{proof}

\begin{theorem}
	Recall that $\Pi_h$ was defined in Section \ref{sect:best_approx} as the piecewise linear nodal interpolation operator , then we have, for $v\in H^2(\Gamma_h)$,
	\begin{align*}
	\|v-\Pi_hv\|_{L^2(\Gamma_h)}&\le c\|h^2\D^2 v\|_{L^2(\Gamma_h)},\quad \text{and}\\
	\|v-\Pi_hv\|_{H^1(\Gamma_h)} & \le c\|h\D^2 v\|_{L^2(\Gamma_h)}
	\end{align*}
\end{theorem}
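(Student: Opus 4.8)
The plan is to establish both bounds elementwise via the standard affine-scaling argument combined with the Bramble--Hilbert lemma, then to sum over the boundary elements. Since $\Gamma_h$ is a one-dimensional curve, each boundary element $E$ is a segment of length $h_E$, and the induced mesh is quasi-uniform by the standing assumption following \eqref{eq:unif-shape-reg}; this guarantees that every local constant below is uniform in $E$.

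First I would fix an element $E$ and pull it back to a reference segment $\hat E$ of unit length through the affine map $x = h_E\hat x$, setting $\hat v(\hat x) := v(x)$. Because nodal interpolation commutes with this map, the interpolation error transforms as $\widehat{v - \Pi_h v} = \hat v - \hat\Pi\hat v$, where $\hat\Pi$ denotes nodal interpolation on $\hat E$. A direct change of variables yields the scaling identities
\[
\|v\|_{L^2(E)}^2 = h_E\,\|\hat v\|_{L^2(\hat E)}^2,
\qquad
\|\D^2 v\|_{L^2(E)}^2 = h_E^{-3}\,\|\D^2\hat v\|_{L^2(\hat E)}^2,
\]
together with $\|\D(v-\Pi_h v)\|_{L^2(E)}^2 = h_E^{-1}\,\|\D(\hat v-\hat\Pi\hat v)\|_{L^2(\hat E)}^2$, which reduce the entire estimate to the fixed reference element.

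On $\hat E$ the interpolation error $\hat v - \hat\Pi\hat v$ vanishes whenever $\hat v$ is affine, so the Bramble--Hilbert lemma supplies the reference bound
\[
\|\hat v - \hat\Pi\hat v\|_{L^2(\hat E)}
	+ \|\D(\hat v - \hat\Pi\hat v)\|_{L^2(\hat E)}
	\lesssim \|\D^2\hat v\|_{L^2(\hat E)}.
\]
Inserting this into the scaling identities gives the local estimates $\|v - \Pi_h v\|_{L^2(E)} \lesssim h_E^2\,\|\D^2 v\|_{L^2(E)}$ and $\|\D(v - \Pi_h v)\|_{L^2(E)} \lesssim h_E\,\|\D^2 v\|_{L^2(E)}$. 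Squaring, summing over all $E\subset\Gamma_h$, and recognising the resulting right-hand sides as $\|h^2\D^2 v\|_{L^2(\Gamma_h)}^2$ and $\|h\D^2 v\|_{L^2(\Gamma_h)}^2$ respectively yields the two stated bounds; for the full $H^1(\Gamma_h)$ norm one simply adds the two elementwise estimates and absorbs the surplus factor $h_E \le \diam(\Gamma_h)$ into the constant.

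The argument is entirely classical, so there is no serious obstacle; the only point demanding attention is the uniformity of the Bramble--Hilbert constant over the whole family of elements, which is precisely what the quasi-uniformity of the boundary mesh delivers. On a graded mesh the factors $h_E$ are retained locally and reappear, as above, as the mesh-size weight $h$ multiplying $\D^2 v$ inside the norms on the right-hand side.
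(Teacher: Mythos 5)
Your proposal is correct and follows exactly the route the paper takes: the paper's proof is a one-line appeal to the Bramble--Hilbert lemma, and your affine-scaling argument to a reference segment, reference-element estimate, and elementwise summation is precisely the standard way to make that appeal rigorous (with uniformity of constants coming from the assumed quasi-uniformity of the boundary mesh, as you note). The only remark the paper adds that you omit is that only the tangential part of $\D^2 v$ actually enters the estimate, since the interpolation error lives on the one-dimensional curve $\Gamma_h$.
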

\begin{proof}
	This is a direct result of Bramble-Hilbert Lemma. It is worth noting
	that in fact we only need the tangential part of $\nabla^2 v$ for this
	estimate.
\end{proof}

Thus, using also $\big[\tfrac{1}{2}{\rm diam}(\Gamma_h)\big] \approx N$,
we can conclude that
\begin{equation}\label{eq:best_approx_err_H1/2}
\begin{aligned}
\|\Pi_h u^\a  -\tilu^\a_h  \|_{H^{1/2}_{\Gamma_h}}
&\le\|\Pi_h u^\a  -\tilu^\a_h \|^{1/2}_{H^1_{\Gamma_h}}\|\Pi_h u^\a  -\tilu^\a_h \|^{1/2}_{H^0_{\Gamma_h}} \\
&\lesssim \left( N \|h\D ^2 \tilu^\a_h\|^2_{L^2(\Gamma_h)} + N^{-1} \|h^2\D ^2 \tilu^\a_h\|^2_{L^2(\Gamma_h)}\right)^{1/4} \\
&\quad \cdot \left(N^{-1} \|h^2\D ^2 \tilu^\a_h\|^2_{L^2(\Gamma_h)}\right)^{1/4} \\
&\lesssim \|h\D ^2 \tilu^\a_h\|_{L^2(\Gamma_h)}^{1/2} \|h^2\D ^2 \tilu^\a_h\|_{L^2(\Gamma_h)}^{1/2}
 	+ N^{-2} \|h^2\D ^2 \tilu^\a_h\|_{L^2(\Gamma_h)} \\
&\lesssim \|h^{3/2} \D ^2 \tilu^\a_h\|_{L^2(\Gamma_h)},
\end{aligned}
\end{equation}
where, in the last line, we used the fact that $N^{-2} h \lesssim 1$
and that $h$ is quasi-uniform on $\Gamma_h$.

By the stability estimate \eqref{eq: g_h_stability}, we have
\begin{equation*}
\|g_h^{-1}(\Pi_h u -\tilu^\a_h)\|_{H^{-1/2}(\Gamma_h)} \le c\|\Pi_h u -\tilu^\a_h\|_{H^{1/2}_{\Gamma_h}}.
\end{equation*}
Therefore we can estimate $A_1$ as follows
\begin{equation}\label{eq:A1}
A_1 \le C\|\Pi_h u^\a  -\tilu^\a_h  \|_{H^{1/2}_{\Gamma_h}} \|v_h\|_{H^{1/2}_{\Gamma_h}}  \lesssim \|h^{3/2}\D ^2\tilu^\a  \|_{L^2(\Gamma_h)} \|v_h\|_{H^{1/2}_{\Gamma_h}}.
\end{equation}

\subsubsection{Estimate of $A_2$}
Since $w$ is the solution to the Laplace equation \eqref{eq:lapw} with smooth boundary $\pp \mathcal{B}_R$, its conormal derivative $g^{-1}w$ on $\Gamma_h$ is in $H^{1}(\Gamma_h)$. Hence we can apply Theorem \ref{thm:g-g_h}
and then Lemma \ref{lem:g-1_rescale} to estimate
\begin{align}
		\notag
A_2 &\le \|(g^{-1}_h - g^{-1} ) w\|_{H^{-1/2}(\Gamma_h)} \|v_h\|_{H^{1/2}_{\Gamma_h}}\\
\notag
 &\le c h^{3/2}\|\D (g^{-1}w)\|_{L^2(\Gamma_h)}\|v_h\|_{H^{1/2}_{\Gamma_h}}\\
 \notag
 &\lesssim h^{3/2} \|\D^2 w\|_{L^2(\Gamma_h)}\|v_h\|_{H^{1/2}_{\Gamma_h}}\\
 	\label{eq:estimate_A2}
 &\lesssim h^{3/2} N^{-5/2}\|v_h\|_{H^{1/2}_{\Gamma_h}},
\end{align}
where the last line results from \eqref{eq:decay_D2w} and the Trace Theorem.

\subsubsection{Estimates of $A_3$ and $A_4$ }\label{sec:A3A4}
By the stability of $g^{-1}$, we have
\begin{equation*}
A_3 \le C_2 \|\tilu^\a_h  - w\|_{H^{1/2}_{\Gamma_h}}\|v_h\|_{H^{1/2}_{\Gamma_h}}.
\end{equation*}
Since $g^{-1}$ is positive-definite and bounded by Lemma \ref{lem:g-1_rescale}, we can link $\|\cdot \|_{H^{1/2}_{\Gamma_h}}$ to $\|\D \cdot \|_{L^2(\Omega_h^\complement)}$ through exterior Laplace problems.

Recall that $\tilu^\a _h : = \tilu^\a  - c_h \in H^{1/2}_*(\Gamma_h)$. Then by Theorem \ref{thm:+ve_def} the following exterior Laplace problem has a unique solution
\begin{equation}\label{eq:lap_y}
\begin{aligned}
-\Delta y &= 0, \quad \text{in } \Omega_h^\complement,\\
y & = \tilu^\a_h , \quad \text{on } \Gamma_h, \\
|y(x)| &= \mathcal{O}\left({\frac{1}{|x|}}\right) \quad \text{as }|x|\rightarrow \infty.
\end{aligned}
\end{equation}
Then arguing exactly as in the proof of Lemma \ref{lem:w-ua}, we have
\begin{equation*}
\|\D y - \D \tilu ^\a \|_{L^2(\Omega_h^\complement)} \lesssim N^{-3}.
\end{equation*}
In addition, by the positive-definiteness of $g^{-1}$ in Lemma \ref{lem:g-1_rescale} we have
\begin{align*}
C_1\|\tilu^\a_h  - w\|^2_{H^{1/2}_{\Gamma_h}} &\le  \< g^{-1} (\tilu^\a_h  - w), (\tilu^\a_h  - w)\>  = \int_{\Omega_h^\complement} |\D y - \D w|^2.
\end{align*}
Therefore we have
\begin{align}
\notag
A_3 &\lesssim\|\tilu^\a_h  - w\|_{H^{1/2}_{\Gamma_h}}\|v_h\|_{H^{1/2}_{\Gamma_h}}\\
\notag
&\lesssim \left(\|\D y - \D \tilu ^\a_h \|_{L^2(\Omega_h^\complement)} + \|\D w - \D \tilu ^\a_h \|_{L^2(\Omega_h^\complement)} \right) \|v_h\|_{H^{1/2}_{\Gamma_h}}\\
\notag
& \lesssim \left(N^{-3}+R^{-3} \right)\|v_h\|_{H^{1/2}_{\Gamma_h}}\\
\label{eq:estimate_A3}
&\lesssim N^{-3}\|v_h\|_{H^{1/2}_{\Gamma_h}}.
\end{align}

For $A_4$, using the stability of $g^{-1}_h$ in \eqref{eq: g_h_stability} and the same argument as for $A_4$, we have
\begin{equation}\label{eq:estimate_A4}
A_4 \lesssim N^{-3}\|v_h\|_{H^{1/2}_{\Gamma_h}}.
\end{equation}

\subsubsection{Estimate of $A_5$}\label{sec:A5}
Now we consider
\begin{equation*}
A_5 = \int_{\Omega_h^\complement} \D \tilu^\a \cdot(\D v_h^E -  \D v ).
\end{equation*}
Recall that $v$ is the quasi-interpolant of $v_h^E$ defined in \eqref{def: varphi_intp}. Under the assumption {\bf (A3)} we know that
\begin{equation*}
v_h^E (x) - v(x) = 0 \quad \forall x \in \Gamma_h.
\end{equation*}
So we can use analogous argument to the proof of Lemma \ref{lem:int_err} to get
\begin{align*}
A_5 & \lesssim \|\D {\rm div} [\D \tilde{u}^\a]\|_{L^2(\Omega_h^\complement)} \|\D v^E_h\|_{L^2(\Omega_h^\complement)} \\
& \lesssim \|\D^3 \tilde{u}^\a\|_{L^2(\Omega_h^\complement)} \|\D v^E_h\|_{L^2(\Omega_h^\complement)} .
\end{align*}
By \eqref{eq:eqv_v_v_h^E} we have
\begin{equation*}
 \|\D v^E_h\|_{L^2(\Omega_h^\complement)}   \lesssim \|v_h\|_{H^{1/2}_{\Gamma_h}}.
\end{equation*}
Therefore we have
\begin{equation}\label{eq:estimate_A5}
A_5 \lesssim \|\D^3 \tilde{u}^\a\|_{L^2(\Omega_h^\complement)} \|v_h\|_{H^{1/2}_{\Gamma_h}} \lesssim N^{-3}\|v_h\|_{H^{1/2}_{\Gamma_h}}.
\end{equation}

Summarising all five components of the BEM error estimates \eqref{eq:A1}, \eqref{eq:estimate_A2}, \eqref{eq:estimate_A3}, \eqref{eq:estimate_A4} and \eqref{eq:estimate_A5} we obtain
\begin{equation}\label{eq:BEM_err}
\text{BEM error} \lesssim \big(N^{-3} + \|
h^{3/2} \D ^2\tilu ^\a \|_{L^2(\Gamma_h)}\big) \|v_h\|_{H^{1/2}_{\Gamma_h}}.
\end{equation}

\subsection{Proof of Theorem \ref{thm:cons_main}}\label{sect:proof_thm_consit}
Finally, recalling the decomposition in \eqref{eq:consistency_decomposition}, we add the estimates for all three components \eqref{eq:int_err}, \eqref{eq:modelling_err} and \eqref{eq:BEM_err} together to get the following estimate.
	We have, for any $v_h\in \mathcal{U}^*_h$
	\begin{equation*}
	\begin{aligned}
	&\<\del \E_h ^{\rm tot }(\Pi_h u^\a), v_h\> \\
	 \lesssim &(\|\D ^2\tilu ^\a \|_{L^2(\Omega^\i) } +\|\D ^3\tilu ^\a \|_{L^2(\R ^2 \setminus \Omega ^\a )} +\|\D ^2\tilu ^\a \|^2_{L^4(\R ^2 \setminus \Omega ^\a )} )\|v_h\|_E\\
	 &\quad + \|h\D ^2\tilu ^\a \|_{L^2(\Omega^\c _h)} \|\D v_h\|_{L^2(\Omega^\c _h)}+ N^{-3} \|\D v_h\|_{H^{1/2}_{\Gamma_h}}\\
	 &\quad + \|
	 h^{3/2} \D ^2\tilu ^\a \|_{L^2(\Gamma_h)} \|v_h\|_{H^{1/2}_{\Gamma_h}}.
		\end{aligned}
	\end{equation*}
Therefore the result follows.

\subsection {Proof of Theorem \ref{thm:err_2}}\label{sect:proof_thm_err_2}
We shall use the Inverse Function Theorem \ref{theo:inverse}. To put into the context of Theorem \ref{theo:inverse}, let
\begin{equation*}
\mathcal{G}_h(v) : =  \del \E^{\rm tot}_h(v) -\del f(v) \quad \text{and}\quad \bar{u}_h := \Pi_h u^\a.
\end{equation*}
Then Theorem \ref{thm:cons_main} gives property \eqref{inv_consist} and Theorem \ref{thm:stab} gives property \eqref{inv_stab}. Then we can conclude that, for $K, N$ sufficiently large, there exists $u_h\in \mathcal{U}_h^*$ such that
\begin{equation*}
\mathcal{G}_h(u_h)  = 0, \quad \text{and}
\end{equation*}
\begin{align*}
\|u_h-\Pi_h u^\a\|_E &\lesssim \|\D ^2\tilu ^\a \|_{L^2(\Omega^\i) } +\|\D ^3\tilu ^\a \|_{L^2(\R ^2 \setminus \Omega ^\a )} +\|\D ^2\tilu ^\a \|^2_{L^4(\R ^2 \setminus \Omega ^\a )} \\
&\quad + \|h\D ^2\tilu ^\a \|_{L^2(\Omega^\c _h)}  + \|h^{3/2}\D ^2\tilu ^\a \|_{L^2(\Gamma_h)} + N^{-3} .
\end{align*}
Finally we add the best approximation error
\begin{align*}
\|\Pi_h u^\a - u_h\|^2_E &= \|\D \Pi_hu^\a - \D u_h\|^2_{L^2(\Omega_h)} +\|\Pi_hu^\a - u_h\|^2_{H^{1/2}_{\Gamma_h}}\\
& \lesssim \|h\D ^2\tilu ^\a \|^2_{L^2(\Omega^\c _h)} +  \|h^{3/2}\D ^2\tilu ^\a \|^2_{L^2(\Gamma_h)},
\end{align*}
where the last term comes from \eqref{eq:best_approx_err_H1/2}. Thus the result follows.

\appendix
\section{Proof of Theorem \ref{thm: ellip}}\label{app:ellip}
The proof follows exactly as Theorem 2 in \cite{Costabel} but with details specific for 2D, showing how the subspace $ H^{-1/2}_*(\Gamma_h)$ ensures that the far-field value $u_0 = 0$.

To construct the proofs for Theorem \ref{thm: ellip}, we need several intermediate results from literature.

\begin{lemma}\label{lem:decay_H*}
	Suppose $v\in H_*^{-1/2}(\Gamma_h)$ , $y_0\in \Omega_h$ and $u(x) = (Av)(x)$ for $x\in \R ^2\setminus\Gamma_h$, then we have
	\begin{align*}
	|u(x)| &\le c_1 \frac{1}{|x-y_0|} \quad \text{and}\\
	|\D u(x)| &\le c_2 \frac{1}{|x-y_0|^2} , \quad \text{for } |x-y_0|> \max \{1,2{\rm diam}(\Omega_h)\}.
	\end{align*}
\end{lemma}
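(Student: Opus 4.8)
The plan is to exploit the single structural hypothesis that makes the statement work, namely the zero-mean condition $\langle v, 1\rangle_{\Gamma_h} = 0$ built into $H^{-1/2}_*(\Gamma_h)$. This lets us subtract a constant from the kernel and thereby gain one full order of decay over the naive bound. Since $x$ lies at distance $|x-y_0| > 2\diam(\Omega_h)$ from the interior point $y_0 \in \Omega_h$, it is bounded away from $\Gamma_h$, so the kernel $y \mapsto G(x,y)$ is smooth on $\Gamma_h$ and $u(x)$ may be read as the duality pairing $u(x) = \langle v, G(x,\cdot)\rangle_{\Gamma_h}$. Because $\langle v, 1\rangle_{\Gamma_h} = 0$, for any constant $c$ we have $u(x) = \langle v, G(x,\cdot) - c\rangle_{\Gamma_h}$; I would choose $c = G(x,y_0)$ and bound by duality $|u(x)| \le \|v\|_{H^{-1/2}(\Gamma_h)}\,\|G(x,\cdot) - G(x,y_0)\|_{H^{1/2}(\Gamma_h)}$.

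The first estimate then reduces to bounding the $H^{1/2}(\Gamma_h)$ norm of this shifted kernel. Writing $r := |x-y_0|$ and $\rho := |y-y_0| \le \diam(\Omega_h)$ for $y \in \Gamma_h$, the hypothesis $r > 2\diam(\Omega_h)$ forces $\rho < r/2$; the reverse triangle inequality gives $\big||x-y| - r\big| \le \rho$, and a mean-value bound on the logarithm yields the pointwise estimate $|G(x,y) - G(x,y_0)| = \tfrac{1}{2\pi}\big|\log(|x-y|/r)\big| \lesssim \rho/r \lesssim 1/r$. Since moreover $\D_y G(x,y) = -\tfrac{1}{2\pi}(y-x)/|x-y|^2$ has magnitude $\lesssim 1/r$ on $\Gamma_h$, both the $L^2(\Gamma_h)$ norm and the tangential-derivative $L^2(\Gamma_h)$ norm of $G(x,\cdot) - G(x,y_0)$ are $\lesssim 1/r$, so its $H^1(\Gamma_h)$ norm is $\lesssim 1/r$; interpolation (the Interpolation Theorem, or the embedding $H^1(\Gamma_h) \hookrightarrow H^{1/2}(\Gamma_h)$) transfers this to the $H^{1/2}$ norm. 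This proves $|u(x)| \lesssim \|v\|_{H^{-1/2}(\Gamma_h)}/r$.

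For the gradient estimate I would differentiate under the integral, $\D u(x) = \langle v, \D_x G(x,\cdot)\rangle_{\Gamma_h}$ with $\D_x G(x,y) = -\tfrac{1}{2\pi}(x-y)/|x-y|^2$, and again use the zero-mean property to replace the kernel by $\D_x G(x,\cdot) - \D_x G(x,y_0)$. The map $y \mapsto (x-y)/|x-y|^2$ has $y$-gradient of size $\lesssim 1/|x-y|^2 \lesssim 1/r^2$ on $\Gamma_h$, so the mean value theorem gives $|\D_x G(x,y) - \D_x G(x,y_0)| \lesssim \rho/r^2 \lesssim 1/r^2$, and likewise its tangential derivative in $y$ is $\lesssim 1/r^2$; hence the shifted vector kernel has $H^1(\Gamma_h)$, and therefore $H^{1/2}(\Gamma_h)$, norm $\lesssim 1/r^2$. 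Duality then yields $|\D u(x)| \lesssim \|v\|_{H^{-1/2}(\Gamma_h)}/r^2$, and absorbing $\|v\|_{H^{-1/2}(\Gamma_h)}$ together with the geometric factors into $c_1$ and $c_2$ completes the argument.

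The main obstacle, and the reason this is not a one-line $\int|v|$ estimate, is that $v$ is only an $H^{-1/2}$ distribution and cannot be pulled out of the integral pointwise; everything must be phrased through the duality pairing, so the genuine work lies in the $H^{1/2}(\Gamma_h)$ (equivalently $H^1(\Gamma_h)$) norm estimates of the two shifted kernels. I would also be careful that the radius condition $|x-y_0| > \max\{1, 2\diam(\Omega_h)\}$ is invoked in two distinct ways: the factor $2\diam(\Omega_h)$ guarantees $\rho/r < 1/2$ so that the logarithm and mean-value expansions are valid and the kernel stays smooth, while the lower bound $1$ keeps the $L^2(\Gamma_h)$ contributions uniformly controlled.
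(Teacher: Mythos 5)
Your proof is correct. The paper itself gives no argument here --- it simply defers to Lemma 6.21 of \cite{Olaf:BEM} --- and your kernel-subtraction argument (using $\<v,1\>_{\Gamma_h}=0$ to replace $G(x,\cdot)$ by $G(x,\cdot)-G(x,y_0)$, then bounding the shifted kernel in $H^{1/2}(\Gamma_h)$ via its $L^2$ and tangential-derivative bounds and concluding by duality) is essentially the standard proof of that cited result, so it serves as a valid self-contained substitute for the paper's citation.
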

\begin{proof}
	See Lemma 6.21 in \cite{Olaf:BEM}.
\end{proof}
\begin{lemma}\label{lem:jump}
	For $w\in H^{-1/2}(\Gamma_h)$ and $u = Aw$, we have the following jump relation:
	\begin{equation}\label{eq:jump}
	\gamma^{\rm int}_1 u - \gamma^{\rm ext}_1u = w.
	\end{equation}
\end{lemma}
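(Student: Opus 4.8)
The plan is to establish \eqref{eq:jump} in the sense of distributions, which treats all densities $w \in H^{-1/2}(\Gamma_h)$ simultaneously and makes the orientation of the two conormal derivatives transparent. The starting point is the mapping property of the single layer potential: for $w \in H^{-1/2}(\Gamma_h)$ the function $u := Aw$ lies in $H^1_\loc(\R^2)$, is harmonic in both $\Omega_h$ and $\Omega_h^\complement$, and is continuous across $\Gamma_h$ (see \cite{Olaf:BEM, Costabel}). The global $H^1_\loc$-regularity is the crucial input: it says that the single layer potential carries no jump in its trace, so that only a jump in its normal derivative can occur. Consequently the interior and exterior conormal derivatives $\gamma^{\rm int}_1 u, \gamma^{\rm ext}_1 u \in H^{-1/2}(\Gamma_h)$ are well defined through Green's first identity on each subdomain, and for smooth densities they coincide with the pointwise limits used to define $\gamma^{\rm int}_1$ and $\gamma^{\rm ext}_1$ in \S\ref{sect:model}.

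The key computation is that $G(\cdot,y)$ is the fundamental solution, so $-\Delta_x G(x,y) = \delta(x-y)$ and hence, as a distribution on $\R^2$,
\[
	-\Delta u = w\,\delta_{\Gamma_h},
	\qquad\text{i.e.}\qquad
	\langle -\Delta u, \varphi\rangle = \int_{\Gamma_h} w\,\varphi\,\dd S(y)
	\quad\forall\,\varphi \in C^\infty_c(\R^2).
\]
On the other hand, I would pair $-\Delta u$ with $\varphi$ directly and split the domain, using $u \in H^1_\loc(\R^2)$:
\[
	\langle -\Delta u, \varphi\rangle
		= \int_{\R^2} \D u \cdot \D \varphi
		= \int_{\Omega_h} \D u \cdot \D \varphi + \int_{\Omega_h^\complement} \D u \cdot \D \varphi.
\]
Applying Green's first identity in each subdomain --- with outward normal $n$ on $\partial\Omega_h$ and $-n$ on the $\Gamma_h$ part of $\partial\Omega_h^\complement$ --- and using that $u$ is harmonic, the volume terms drop out and only the $\Gamma_h$ contributions survive; choosing $\varphi$ supported in a large ball removes any far-field boundary term, so no decay estimate (such as Lemma \ref{lem:decay_H*}) is needed. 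This yields
\[
	\int_{\Omega_h} \D u \cdot \D \varphi = \int_{\Gamma_h} \gamma^{\rm int}_1 u\,\varphi\,\dd S(y),
	\qquad
	\int_{\Omega_h^\complement} \D u \cdot \D \varphi = -\int_{\Gamma_h} \gamma^{\rm ext}_1 u\,\varphi\,\dd S(y).
\]

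Adding the two identities and equating with the distributional formula gives
\[
	\int_{\Gamma_h} (\gamma^{\rm int}_1 u - \gamma^{\rm ext}_1 u)\,\varphi\,\dd S(y)
		= \int_{\Gamma_h} w\,\varphi\,\dd S(y)
	\qquad \forall\,\varphi \in C^\infty_c(\R^2).
\]
Since the traces on $\Gamma_h$ of functions in $C^\infty_c(\R^2)$ are dense in $H^{1/2}(\Gamma_h)$, the $H^{-1/2}\times H^{1/2}$ duality identifies $\gamma^{\rm int}_1 u - \gamma^{\rm ext}_1 u = w$, which is \eqref{eq:jump}. I expect the main obstacle to be the bookkeeping of the weak conormal derivatives: one must justify that the $\gamma^{\rm int}_1 u,\gamma^{\rm ext}_1 u$ produced by Green's identity agree with the limit definitions of \S\ref{sect:model}, which is done by the usual density argument (prove the identity for smooth $w$, where the pointwise limits are classical, and pass to the limit using boundedness of $A$ into $H^1_\loc$ and of the conormal trace into $H^{-1/2}(\Gamma_h)$), together with confirming the continuity of $Aw$ across $\Gamma_h$ so that no double layer term ${\rm div}\,([u]\,n\,\delta_{\Gamma_h})$ contaminates $-\Delta u$.
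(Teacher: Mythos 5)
Your proof is correct. Note that the paper gives no argument of its own here: it simply cites Lemma 4 of \cite{Costabel}, so your distributional derivation is a self-contained reconstruction of the result the paper outsources to the literature (and it is essentially the proof one finds there). The two key identities --- $-\Delta u = w\,\delta_{\Gamma_h}$ as distributions, and the splitting of $\int_{\R^2}\D u\cdot\D\varphi$ via Green's identity on $\Omega_h$ and $\Omega_h^\complement$ with the paper's orientation convention (both conormal derivatives taken with respect to $n$ pointing into $\Omega_h^\complement$, whence the minus sign on the exterior term) --- are sound, and your observation that compact support of $\varphi$ removes any far-field boundary term is exactly right; this is why the argument needs only $H^1_\loc$ regularity of $Aw$ and survives the logarithmic growth of the 2D single layer potential when $\<w,1\>_{\Gamma_h}\neq 0$, with no appeal to Lemma \ref{lem:decay_H*}. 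The one point that genuinely needs care, and which you correctly flag rather than gloss over, is that the conormal derivatives produced by Green's identity are the weak (variational) ones, whereas \S\ref{sect:model} defines $\gamma^{\rm int}_1,\gamma^{\rm ext}_1$ as pointwise limits; reconciling the two by proving the identity for smooth densities and passing to the limit using boundedness of $A$ into $H^1_\loc$ and of the weak conormal trace into $H^{-1/2}(\Gamma_h)$ is the standard and adequate fix. What your route buys over the paper's is transparency of the sign and orientation bookkeeping, made fully explicit for the conventions of this paper; what the citation buys is brevity.
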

\begin{proof}
	See Lemma 4 in \cite{Costabel}.
\end{proof}

\begin{lemma}
	The interior and exterior conormal derivatives $\gamma^{\rm int}_1: H^1(\Omega_h) \rightarrow H^{-1/2}(\Gamma_h)$ and $\gamma^{\rm ext}_1: H^1(\Omega_h^\complement) \rightarrow H^{-1/2}(\Gamma_h)$ are continuous in the sense that
	\begin{align}
	\|\gamma_1^{\rm int} u\|_{H^{-1/2}(\Gamma_h)} &\le c^{\rm int}\|\D u\|_{L^2(\Omega_h)} \label{eq: bdd_int}\\
	\|\gamma_1^{\rm ext} u\|_{H^{-1/2}(\Gamma_h)} &\le c^{\rm ext}\|\D u\|_{L^2(\Omega^\complement_h)}.\label{eq:bdd_ext}
	\end{align}

	\end{lemma}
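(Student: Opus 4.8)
The plan is to define the conormal derivative weakly through Green's first identity and then estimate it by duality, following \cite{Costabel, Olaf:BEM}. Throughout, the conormal derivative is understood for functions that are harmonic on the relevant domain (which is the only case to which it is applied in this paper), so that Green's identity carries no volume contribution and the right-hand sides of \eqref{eq: bdd_int}--\eqref{eq:bdd_ext} involve the gradient alone. Concretely, for $u$ harmonic in $\Omega_h$ the interior conormal derivative is characterised by
\begin{equation*}
\<\gamma^{\rm int}_1 u, \gamma^{\rm int}_0 v\>_{\Gamma_h} = \int_{\Omega_h} \D u \cdot \D v, \qquad \forall v \in H^1(\Omega_h),
\end{equation*}
and the right-hand side is independent of the chosen extension of the boundary datum, since $\int_{\Omega_h} \D u \cdot \D \phi = 0$ for every $\phi \in H^1_0(\Omega_h)$.

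To estimate the $H^{-1/2}(\Gamma_h)$-norm I would test against an arbitrary $w \in H^{1/2}(\Gamma_h)$. By the bounded right inverse of the trace operator associated with Theorem \ref{thm: trace} (see also \cite{Adam:soblev}), there is a lifting $Ew \in H^1(\Omega_h)$ with $\gamma^{\rm int}_0 Ew = w$ and $\|Ew\|_{H^1(\Omega_h)} \le c\|w\|_{H^{1/2}(\Gamma_h)}$. Cauchy--Schwarz then gives
\begin{equation*}
\<\gamma^{\rm int}_1 u, w\>_{\Gamma_h} = \int_{\Omega_h} \D u \cdot \D(Ew) \le \|\D u\|_{L^2(\Omega_h)}\,\|\D(Ew)\|_{L^2(\Omega_h)} \le c\,\|\D u\|_{L^2(\Omega_h)}\,\|w\|_{H^{1/2}(\Gamma_h)},
\end{equation*}
and taking the supremum over $0 \neq w \in H^{1/2}(\Gamma_h)$ in the dual-norm characterisation yields \eqref{eq: bdd_int} with $c^{\rm int} = c$.

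For the exterior bound \eqref{eq:bdd_ext} the argument is structurally identical, but carried out on the unbounded domain $\Omega_h^\complement$ using the homogeneous space $\dot{H}^1(\Omega_h^\complement)$: one invokes a lifting with $\gamma^{\rm ext}_0 Ew = w$ controlled in the gradient seminorm, $\|\D(Ew)\|_{L^2(\Omega_h^\complement)} \le c\|w\|_{H^{1/2}(\Gamma_h)}$, and then repeats the duality estimate. The step I expect to be the main obstacle is precisely this exterior case: one must justify Green's first identity on the unbounded domain with no contribution from the circle at infinity, which is where the decay $|u(x)| = O(|x|^{-1})$, $|\D u(x)| = O(|x|^{-2})$ of the harmonic functions considered (cf. Lemma \ref{lem:decay_H*}) enters, and one must work with the homogeneous seminorm rather than the full $H^1$-norm, which is unavailable on $\Omega_h^\complement$. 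Once the boundary term at infinity is shown to vanish, the same Cauchy--Schwarz and dual-norm argument delivers \eqref{eq:bdd_ext}.
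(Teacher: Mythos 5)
Your proof is correct. Note that the paper offers no argument of its own for this lemma---its proof is the citation ``See Lemma 3.2 in \cite{Costabel}''---and what you have written is precisely the standard argument behind that citation: define the conormal derivative weakly through Green's first identity, then bound it in $H^{-1/2}(\Gamma_h)$ by duality against a bounded trace lifting. Two points are worth recording. First, your restriction to harmonic functions is not merely a convenience but is forced: for a general $u \in H^1(\Omega_h)$ the conormal derivative is undefined, and for $u$ with $\Delta u \in L^2(\Omega_h)$ the weak definition acquires the volume term $\int_{\Omega_h} (\Delta u)\, v \dx$, so a bound by $\|\D u\|_{L^2(\Omega_h)}$ alone, as in \eqref{eq: bdd_int}, could not hold. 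Since the paper only ever applies the lemma to layer potentials and solutions of exterior Laplace problems, your reading is the right one, though the paper leaves it implicit. Second, in the exterior case you can bypass the boundary term at infinity entirely by choosing the lifting $Ew$ with compact support (extend $w$ to a collar neighbourhood of $\Gamma_h$ and multiply by a cutoff); then Green's identity on $\Omega_h^\complement$ effectively takes place on a bounded set, independence of the chosen lifting again follows from weak harmonicity, and the decay estimates of Lemma \ref{lem:decay_H*} are not needed. Either route gives \eqref{eq:bdd_ext} exactly as in the interior case, up to the sign coming from the paper's convention that $n$ points into $\Omega_h^\complement$.
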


\begin{proof}
	See Lemma 3.2 in \cite{Costabel}.
\end{proof}
\begin{proof}[Proof of Theorem \ref{thm: ellip}]
	It is clear that if $v \in H_*^{1/2}(\Gamma)$, $u  = Av(x)$ is a solution to the interior Dirichlet boundary value problem
	\begin{equation*}
	\begin{aligned}
	-\Delta u &= 0, \quad \text{in } {\Omega}_h,\\
	u & = \gamma^{\rm int}_0 (Av)(x) = (Vv)(x) , \quad \text{on } \Gamma_h.
	\end{aligned}
	\end{equation*}
	By choosing $w \in H^1(\Omega_h)$ we integrate by part to get
	\begin{equation}\label{eq:a_int}
	a_{\Omega_h}(u,w) := \int_{\Omega_h} \D u(x)\D w(x) \dx= \<\gamma^{\rm int}_1u, \gamma^{\rm int}_0 w\>_{\Gamma_h}.
	\end{equation}
	On the other hand, for $y_0\in \Omega_h$ and $R>2 {\rm diam}(\Omega_h)$, let $B_R(y_0)$ be a ball centred at $y_0$ with radius $R$. Then $u  = Av(x)$ is also the unique solution to the exterior Dirichlet boundary value problem
		\begin{equation*}
		\begin{aligned}
		-\Delta u &= 0, \quad \text{in } B_R(y_0)\setminus \bar{\Omega_h},\\
		u & = \gamma^{\rm ext}_0 (Av)(x) = (Vv)(x) , \quad \text{on } \Gamma_h,\\
		u& =  (Av)(x)  ,\quad \text{on } \pp B_R(y_0).
		\end{aligned}
		\end{equation*}
		We also integrate by part and get
	\begin{equation*}
	a_{B_R(y_0)\setminus\bar{\Omega}_h}(u,w) := \int_{B_R(y_0)\setminus\bar{\Omega}_h} \D u(x)\D w(x) \dx  =- \<\gamma^{\rm ext}_1u, \gamma^{\rm ext}_0 w\>_{\Gamma_h} + \<\gamma^{\rm int}_1u, \gamma^{\rm int}_0 w\>_{\pp B_R(y_0)}.
	\end{equation*}
	Since $u = Av(x)$ with $v \in H_*^{1/2}(\Gamma)$, by Lemma \ref{lem:decay_H*} we have
	\begin{equation*}
	|\<\gamma^{\rm int}_1u, \gamma^{\rm int}_0 u\>_{\pp B_R(y_0)}|\le C \int_{|x-y_0| = R} \frac{1}{|x-y_0|^3} \,{\rm dS} (x)\le C R^{-2} \rightarrow 0,\text{ as }R\rightarrow \infty.
	\end{equation*}
	Thus we have
	\begin{equation}\label{eq:a_ext}
	a_{\Omega^\complement_h}(u,w) = -\<\gamma^{\rm ext}_1u, \gamma^{\rm ext}_0 w\>_{\Gamma_h}.
	\end{equation}
	Consequently we have by Lemma \ref{lem:jump}
	\begin{equation}
	a_{\Omega}(u,u)  + a_{\Omega^\complement_h}(u,u)  = \< \gamma^{\rm int}_1u- \gamma^{\rm ext}_1u , \gamma^{\rm int}_0 u\>_{\Gamma_h} = \<v, \gamma_0^{\rm int}u\>_{\Gamma_h} = \<V v, v\>_{\Gamma_h}.
	\end{equation}
	Applying \eqref{eq: bdd_int} and \eqref{eq:bdd_ext} gives the ellipticity of $V$. Analogous argument follows for the ellipticity of $D$.
\end{proof}

\section{Proof of Lemma \ref{lem:g-1_rescale}}\label{app:proof_lem_rescale}

	\begin{proof}
		First we show that $u_R \in H^{1/2}_*(\Gamma_R)$. Since $u_1 \in H^{1/2}_*(\Gamma_1)$, there exists $\phi_1 \in H_*^{-1/2}(\Gamma_1)$ such that $V_1 \phi_1 = u_1$. Let $\phi_R(x) : = \tfrac{1}{R}\phi_1(x/R)$, then it is clear that  $\phi_R \in H^{-1/2}_*(\Gamma_R)$. Then we can write
		\begin{align*}
		u_R(x) = u_1(x/R) &= -\frac{1}{2\pi}\int_{\Gamma_1} \log \left|\frac{x}{R} - y\right|\phi_1(y) {\rm \, d}S(y)\\
		&= -\frac{1}{2\pi}\int_{\Gamma_R} \log \left|\frac{x}{R} - \frac{y}{R}\right|\phi_1\left(\frac{y}{R}\right)\frac{1}{R}{\rm d}S(y)\\
		& = -\frac{1}{2\pi}\int_{\Gamma_R} \log \left|\frac{x}{R} - \frac{y}{R} \right|\phi_R(y) \,{\rm d}S(y)\\
		& =  -\frac{1}{2\pi}\int_{\Gamma_R} (\log|x-y| - \log R)\phi_R(y) \,{\rm d}S(y)\\
		& =  -\frac{1}{2\pi}\int_{\Gamma_R} \log|x-y| \phi_R(y)\,{\rm d}S(y) = V_R \phi_R \in H^{1/2}_*(\Gamma_R),
		\end{align*}
		where we used the fact that $\<\phi_R,1\>_{\Gamma_R} =0$. By a similar argument of change of variables, we have
		\begin{equation*}
		[(-K_1+(1 - \lambda)I)]u_1= [-K_R+(1 - \lambda)I)]u_R.
		\end{equation*}

		Now we shall prove that $V^{-1}$ is also scale in-variant. Let $\bar{u}_1$ be the solution to the homogeneous Laplace equation
		\begin{equation*}
		\begin{aligned}
		-\Delta \bar{u}_1 &= 0, \quad \text{in } \R ^2 \setminus\Gamma_1 ,\\
		\bar{u}_1 & = u_1 , \quad \text{on } \Gamma_1,\\
		|\bar{u}_1(x)| &= \mathcal{O}\left(\frac{1}{|x|}\right) \quad \text{as } |x|\rightarrow \infty.
		\end{aligned}
		\end{equation*}
		Then $\bar{u}_R: = \bar{u}_1(x/R)$ also solves
		\begin{equation*}
		\begin{aligned}
		-\Delta \bar{u}_R &= 0, \quad \text{in } \R ^2 \setminus\Gamma_R ,\\
		\bar{u}_R & = u_R , \quad \text{on } \Gamma_R,\\
		|\bar{u}_R(x)| &= \mathcal{O}\left(\frac{1}{|x|}\right),  \quad \text{as } |x|\rightarrow \infty.
		\end{aligned}
		\end{equation*}
		For $i = 1, R$, define $v_i$ and $\bar{v}_i$ in the same way as $u_i$ and $\bar{u}_i$. Then we can apply \eqref{eq:a_int}, \eqref{eq:a_ext} and the jump relation in Lemma \ref{lem:jump} to get
		\begin{align*}
		\int_{\Omega_i}\D \bar{u}_i \D \bar{v}_i +\int_{\R ^2\setminus \Omega_i}\D \bar{u}_i \D \bar{v}_i  &=:  a_{\Omega_i(0}(\bar{u}_i,\bar{v}_i)  + a_{\R ^2\setminus \Omega_i }(\bar{u}_i,\bar{v}_i) \\
		& = \< \gamma^{\rm int}_1\bar{u}_i- \gamma^{\rm ext}_1\bar{u}_i , \gamma^{\rm int}_0 \bar{v}_i\>_{\Gamma_i} \\
		& = \<\phi_i, v_i\>_{\Gamma_i}\\
		& = \<V_i^{-1}u_i, v_i\>_{\Gamma_i}, \quad \text{for } i = 1, R,
		\end{align*}
		where $\phi_i = V_i ^{-1}u_i$.
		Clearly \begin{equation*}
		a_{\Omega_1 }(\bar{u}_1,\bar{v}_1)  + a_{\R ^2\setminus \Omega_1 }(\bar{u}_1,\bar{v}_1 ) = a_{\Omega_R }(\bar{u}_R,\bar{v}_R)  + a_{\R ^2\setminus \Omega_R }(\bar{u}_R,\bar{v}_R ).
		\end{equation*}
		Thus $\<V_1^{-1}u_1, v_1\>_{\Gamma_1 } = \<V_R^{-1}u_R, v_R\>_{\Gamma_R}$ and hence the result follows.
	\end{proof}

\bibliographystyle{plain}
\bibliography{qc}

\end{document}